\documentclass[11pt,reqno]{amsart}

\newcommand{\cev}[1]{\reflectbox{\ensuremath{\vec{\reflectbox{\ensuremath{#1}}}}}}
\usepackage[utf8]{inputenc}
\usepackage{setspace}
\usepackage{geometry}
\usepackage{enumerate}
\usepackage{enumitem, xcolor, amssymb,latexsym,amsmath,bbm}
\usepackage{mathtools}
\usepackage[mathscr]{euscript}
\usepackage{amsmath}
\usepackage{amssymb}
\usepackage{enumitem} 
\newcommand{\R}{\mathbb{R}}
\newcommand{\C}{\mathbb{C}}

\newcommand{\M}{\mathrm{Mod}(S_g)}
\newcommand{\N}{\mathbb{N}}

\usepackage{tikz}
\usepackage{wrapfig}
\usepackage{enumerate}
\usepackage{graphicx}
\usepackage{subfigure}

\usetikzlibrary{arrows.meta}

\usetikzlibrary{decorations.markings}

\usepackage[colorlinks=true,citecolor=blue, linkcolor=blue,urlcolor=blue]{hyperref}

\calclayout

\theoremstyle{plain}
\newtheorem{theorem}{Theorem}[section]
\newtheorem{corollary}[theorem]{Corollary}
\newtheorem{lemma}[theorem]{Lemma}
\newtheorem{proposition}[theorem]{Proposition}
\theoremstyle{definition}
\newtheorem{definition}[theorem]{Definition}
\newtheorem{remark}[theorem]{Remark}

\newtheorem{question}[theorem]{Question}

\newtheorem{example}[theorem]{Example}

\linespread{1.2}

\numberwithin{equation}{section}
\numberwithin{figure}{section}
\allowdisplaybreaks

\title{Filling with separating curves}
\date{\today}
\author{Bhola Nath Saha}
\address{
Department of Mathematics and Statistics\\ 
Indian Institute of Technology  \\ 
Kanpur, Uttar Pradesh-208016\\
India}
\email{bnsaha@iitk.ac.in}
\author{Bidyut Sanki}
\address{
Department of Mathematics and Statistics\\ 
Indian Institute of Technology  \\ 
Kanpur, Uttar Pradesh-208016\\
India}
\email{bidyut@iitk.ac.in}

\begin{document}
\subjclass[2020]{Primary 57M50; Secondary 57M15, 05C10}

\keywords{Hyperbolic surface, mapping class group, filling system, fat graph, Delaunay-Dress symbol}

\maketitle

\begin{abstract}
   A pair $(\alpha, \beta)$ of simple closed curves on a closed and orientable surface $S_g$ of genus $g$ is called a filling pair if the complement is a disjoint union of topological disks. If $\alpha$ is separating, then we call it as separating filling pair. In this article, we find a necessary and sufficient condition for the existence of a separating filling pair on $S_g$ with exactly two complementary disks. We study the combinatorics of the action of the mapping class group $\M$ on the set of such filling pairs. Furthermore, we construct a Morse function $\mathcal{F}_g$ on the moduli space $\mathcal{M}_g$ which, for a given hyperbolic surface $X$, outputs the length of the shortest such filling pair with respect to the metric in $X$. We show that the cardinality of the set of global minima of the function $\mathcal{F}_g$ is the same as the number of $\M$-orbits of such filling pairs. 
\end{abstract}
\section{Introduction}
Let $S_g$ be a closed orientable surface of genus $g$. A \emph{simple closed curve} $\alpha$ on $S_g$ is a continuous injective map $\alpha:S^1\rightarrow S_g$, where $S^1$ is the unit circle. For two simple closed curves $\alpha$ and $\beta$, the \emph{geometric intersection number} between them, denoted by $i(\alpha,\beta)$, is defined by $$i(\alpha,\beta) = \min\limits_{\alpha'\sim \alpha} |\alpha'\cap \beta|,$$ where $\sim$ stands for the free homotopy relation. Two simple closed curves $\alpha$ and $\beta$ are said to be in \emph{minimal position} if their geometric intersection number satisfies $i(\alpha,\beta)=|\alpha\cap\beta|$ (for more details, we refer the readers to~\cite{farb2011primer}, Section 1.2.3). 

A set $\Omega=\{\alpha_1,\hdots,\alpha_n\}$ of simple closed curves on $S_g$ is called a \emph{filling set} if for $i\neq j$, the curves $\alpha_i$ and $\alpha_j$ are in minimal position  and $S_g\setminus(\cup_{i=1}^{n}\alpha_i)$ is a disjoint union of topological disks. If the number of curves in a filling set is $n=2$, then it is called as \emph{filling pair} and if the number of disks in $S_g\setminus(\cup_{i=1}^{n}\alpha_i)$ is minimal or equivalently the total number of intersection points between the curves is minimal, then the filling system is called minimally intersecting. 

The study of filling pairs has its importance for its various applications. For example, Thurston has used filling sets of size two to construct the most nontrivial mapping class group elements, so-called pseudo-Anosov homeomorphisms (for details see~\cite{farb2011primer}, Section 13.2.3). More recently, in~\cite{penner1988construction}, Penner has extended Thurston's construction to pairs of filling multi-curves as follows: given multi-curves $A= \{ a_1, \dots, a_m \}$ and $B=\{b_1,\dots,b_n \}$ such that $A\cup B$ fills the surface S. Then a homeomorphism $f$ which is the product of $T_{a_i}$ and  $T_{b_j}^{-1}$, where each $a_i$ and $b_j$ occur at least once, is a pseudo-Anosov homeomorphism. Note that $T_{a_i}$ denotes the Dehn twist about the simple closed curve $a_i$. Another motivation for studying filling sets is in the problem of the construction of a spine for the moduli space of $S_g$. In \cite{thurstonspine}, Thurston has proposed $\chi_g$, the set of all closed hyperbolic surfaces of genus $g$ having a filling set of systolic geodesics, as a candidate spine of moduli space of $S_g$. He has given a sketch of a proof but unfortunately, that appears difficult to understand. Recently, Bourque \cite{fortier2023dimension} has studied the dimension of Thurston's spine $\chi_g$. The author has shown that for every $\epsilon >0$, there exists an integer $g \geq 2$ such that $\chi_g$ has dimension at least $(5 - \epsilon)g$ (see Theorem 1.1 \cite{fortier2023dimension}). More recently, Mathieu \cite{mathieu2023estimating} has shown the existence of an infinite set $A$ of integers $g \geq 2$ such that $\mathrm{codim}(\chi_g)\in o(g/\sqrt{\mathrm{log}(g)}$, when $g\in A$ goes to infinity.

Fillings of surfaces have been studied extensively by Aougab-Huang \cite{aougab2015minimally}, Sanki \cite{sanki2018filling}, Jeffreys \cite{jeffreys2019minimally}, Aougab-Menasco-Nieland \cite{aougab2022origamis}  and many others \cite{anderson2011small}, \cite{MR3877282}. In \cite{aougab2015minimally}, the authors have shown the existence of minimally intersecting filling pairs on $S_g$ for $g\geq3$ and estimated the bounds of the number of $\M$-orbits of such fillings. In \cite{sanki2018filling}, the author has generalized the result in \cite{aougab2015minimally}, where he has shown the existence of filling pairs on $S_g$ for $g\geq2$ with any given number of complementary regions. Luke Jeffreys further extended the study of minimally intersecting filling pairs on punctured surfaces.  In \cite{jeffreys2019minimally}, he has shown the existence of minimally intersecting filling pairs on punctured surfaces. Furthermore, in \cite{aougab2022origamis}, the authors have considered those minimally intersecting filling pairs for which the geometric intersection number and algebraic intersection number are the same and they find a lower bound for the number of $\M$-orbits of such pairs.

The simple closed curves on a surface are classified into two classes: separating and non-separating. A simple closed curve $\alpha$ on $S_g$ is called \emph{separating} if $S_g\setminus\alpha$, the complement of $\alpha$ in $S_g$, is disconnected. Otherwise, it is called \emph{non-separating}. Mirzakhani \cite{MirzakhaniGrowth}, Delecroix-Goujard-Zograf-Zorich \cite{Delecroix} studied the frequency of separating and nonseparating simple closed geodesics on closed hyperbolic surfaces. The construction of filling pairs with one or two separating curves on hyperelliptic surfaces can be found in the work of Jeffreys \cite{jeffreys2022meanders}. In this paper, we study the topology, geometry and combinatorics of filling pairs containing at least one separating curve. A filling pair $(\alpha,\beta)$, where at least one of $\alpha$ and $\beta$ is a separating simple closed curve, is called a \emph{separating filling pair}. We will always assume that $\alpha$ is a separating curve in a separating filling pair $(\alpha,\beta)$.

Suppose $(\alpha,\beta)$ is a filling pair on $S_g$. We have the following natural question.
\begin{question}\label{q:1}
How to determine whether a filling pair $(\alpha,\beta)$ is a separating filling pair?
\end{question}
We note that a minimally intersecting separating filling pair has exactly one separating curve. In our first result, we answer Question~\ref{q:1}. In particular, we prove a characterisation theorem which gives a necessary and sufficient condition for a curve in a filling pair to be separating. The key ingredient used in the proof is topological graph theory.  

Given a filling pair $(\alpha,\beta)$, we associate a number $b$, the number of topological disks in the complement. A simple Euler's characteristic argument implies that
$i(\alpha,\beta)=2g-2+b.$
If $(\alpha,\beta)$ is a separating filling pair, then we have $b\geq2$ and hence $i(\alpha,\beta)\geq2g$. If $i(\alpha,\beta)=2g$, we call it as a \emph{minimally intersecting separating filling pair} or simply a \emph{minimal separating filling pair}. In~\cite{aougab2015minimally}, Aougab and Huang have shown that for $g\geq 3$, there exists a filling pair on $S_g$ with exactly one connected component in the complement. We note that the curves in such filling pairs are necessarily non-separating. In the context of minimal separating filling pairs, we study an analogous question. We prove the existence of minimal separating filling pairs in the following theorem. 

\begin{theorem}\label{theorem:2}
There exists a minimally intersecting separating filling pair on $S_g$ if and only if g is even and $g\geq4$.
\end{theorem}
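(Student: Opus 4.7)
The plan is first to reduce to arc combinatorics on each side of $\alpha$. From $i(\alpha,\beta)=2g-2+b$ and the hypothesis $i(\alpha,\beta)=2g$, I would deduce $b=2$. Writing $S_g\setminus\alpha=\Sigma_1\sqcup\Sigma_2$ with genera $h_1,h_2=g-h_1$, and using that $\beta$ must alternate sides at each crossing of $\alpha$, each $\Sigma_i$ contains exactly $g$ arcs of $\beta$. Each complementary disk of $\alpha\cup\beta$ lies in one side, so $b_1+b_2=2$; combined with $b_i\geq 1$ (since $h_i\geq 1$), this forces $b_1=b_2=1$. Applying $V-E+F=\chi(\Sigma_i)$ with $V=2g$, $E=3g$, $F=1$ gives $1-g=1-2h_i$, so $h_1=h_2=g/2$ and $g$ is even.

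\textbf{Excluding $g=2$.} Each side is $\Sigma_{1,1}$ with 4 boundary marked points and exactly 2 disjoint essential arcs of $\beta$. Using the classification of essential proper arcs in $\Sigma_{1,1}$ by slopes in $\mathbb{Q}\cup\{\infty\}$, I would argue that 2 disjoint essential arcs with 4 distinct endpoints must have Farey-adjacent slopes, which forces their endpoints to interleave around $\partial\Sigma_{1,1}$; the induced chord pairing on the 4 intersection points is then the unique ``linked'' one. Since the linked pairing is invariant under reversing the cyclic order, both sides realize the same set partition $\pi_1=\pi_2$. But then the 4 arcs of $\beta$ assemble into 2 disjoint 2-cycles rather than a single 4-cycle, contradicting that $\beta$ is a connected simple closed curve.

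\textbf{Construction for even $g\geq 4$ and the main obstacle.} For $h=g/2\geq 2$, the plan is to construct 2 filling arc systems $\mathcal{A}_1,\mathcal{A}_2$ on $\Sigma_{h,1}$, each consisting of $g$ disjoint essential proper arcs with the same $2g$ endpoints on $\partial\Sigma_{h,1}$ and each cutting $\Sigma_{h,1}$ into a single $(8h)$-gon disk. I then glue 2 copies of $\Sigma_{h,1}$ along $\partial$ to form $S_g$ with $\alpha=\partial$, and define $\beta$ as the union of $\mathcal{A}_1$ on one side and $\mathcal{A}_2$ on the other. A natural source for $\mathcal{A}_1$ is the dual arc system of a standard polygonal model of $\Sigma_{h,1}$; $\mathcal{A}_2$ is obtained from $\mathcal{A}_1$ by a controlled short flip sequence in the arc complex chosen so that the induced endpoint pairing $\pi_2$ differs from $\pi_1$ in a prescribed way. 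The verification splits into (i) each side contains exactly 1 complementary disk, automatic from Euler once the arcs fill, and (ii) the combined pairing $\pi_1\cup\pi_2$ forms a single $2g$-cycle on the $2g$ intersection points, so that $\beta$ is connected and embedded. Step (ii) is where the real work lies; the rigidity of $\Sigma_{1,1}$ that obstructed $g=2$ disappears when $h\geq 2$, because $\Sigma_{h,1}$ admits many combinatorially distinct filling arc systems on a shared endpoint set, giving enough freedom to tune the matching to a single Hamiltonian cycle and thereby realize a minimal separating filling pair on $S_g$.
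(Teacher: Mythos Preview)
Your forward direction (parity via Euler on each side) is correct and is essentially the content of the paper's Proposition~4.1. Your exclusion of $g=2$ is also correct and is arguably more transparent than the paper's: once each side is $\Sigma_{1,1}$ carrying two disjoint arcs cutting it to a single disk, one checks directly that an unlinked endpoint pairing leaves an annulus component after cutting (the second arc then has both endpoints on the same boundary circle of the annulus and is separating there), so the pairing must be linked; since the unique linked pairing on four cyclically ordered points is invariant under reversing the cyclic order, both sides induce the same partition and $\beta$ falls apart into two components. Your Farey-slope phrasing is a bit loose---slopes classify arcs only up to isotopy moving endpoints---but the conclusion stands. The paper instead enumerates the possible normal matrices for the associated fat graph and finds that the unique candidate forces a bigon.

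The genuine gap is the construction for even $g\ge 4$. You have correctly reduced the problem to producing two filling arc systems $\mathcal{A}_1,\mathcal{A}_2$ on $\Sigma_{h,1}$ with the same $2g$ endpoints whose pairings concatenate to a single $2g$-cycle, but you do not construct them; ``enough freedom when $h\ge 2$'' is an expectation, not an argument, and a flip in the arc complex generally changes the endpoints of an arc, so ``short flip sequence with fixed endpoint set'' needs justification you do not give. The paper avoids this global bookkeeping: it exhibits one explicit minimal separating filling pair on $S_4$ (by a picture) and then inducts $g\mapsto g+2$ by taking a fixed auxiliary separating filling pair $(\alpha',\beta')$ on $S_2$ with \emph{four} complementary disks, removing a small disk around one intersection point on $S_g$ and around a suitable intersection point on $S_2$, and gluing the boundary circles so that the $\alpha$-arcs and $\beta$-arcs match up. The verification that the result is again a minimal separating filling pair is then local. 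If you want to salvage your direct approach you would need, for each $h\ge 2$, an explicit pair $(\mathcal{A}_1,\mathcal{A}_2)$ together with a check of the Hamiltonian-cycle condition; absent that, the inductive route is both shorter and complete.
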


The proof of Theorem~\ref{theorem:2} is constructive. We explicitly construct such a filling pair on $S_4$ and for the existence on $S_g, g\geq 6$ even, we use the method of induction on $g$. The converse part is proved using the standard Euler's characteristic formula.

For $g\geq 3$ odd, any separating filling pair $(\alpha,\beta)$ on $S_g$ has at least four complementary disks or equivalently $i(\alpha,\beta)\geq (2g+2)$. We prove the theorem below.
\begin{theorem}\label{thm:new}
    For genus $g\geq 3$ odd, there exists a separating filling pair $(\alpha_g, \beta_g)$ on $S_g$ with four complementary disks or equivalently $i(\alpha_g,\beta_g)= (2g+2)$.
\end{theorem}

The mapping class group $\M$ of a surface $S_g$ of genus $g$ is the group of all orientation preserving self-homeomorphisms up to isotopy  (\cite{farb2011primer}, Chapter 2). The group $\M$ acts on the set of minimally intersecting separating filling pairs as follows: for a minimally intersecting separating filling pair $(\alpha,\beta)$ and $f\in \M$, $f\cdot(\alpha,\beta)=(f(\alpha),f(\beta))$. We estimate the number of $\M$-orbits of this action in the following theorem.
\begin{theorem}\label{thm:1.3}
If $N(g)$ is the number of $\mathrm{Mod}(S_g)$-orbits of minimally intersecting  separating filling pairs, then 
\begin{align*}
\frac{\prod\limits_{k=1}^{\frac{g-4}{2}}(3k+5)}{4\times (2g)^2\times (\frac{g-4}{2})!}\leq N(g) \leq 2(2g-2)(2g-2)!.
\end{align*}
\end{theorem}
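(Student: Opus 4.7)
My plan is to reduce the enumeration to a count of combinatorial ribbon graphs and then establish the two bounds separately. A minimally intersecting separating filling pair $(\alpha,\beta)$ on $S_g$ determines a CW decomposition of $S_g$ whose $0$-cells are the $2g$ points of $\alpha\cap\beta$, whose $1$-cells are the $4g$ arcs of $\alpha\cup\beta$, and whose two $2$-cells are the complementary disks. The resulting $4$-valent graph $G=\alpha\cup\beta$ carries a canonical $2$-edge colouring coming from the two curves, with opposite edges at each vertex sharing a colour. Two such filling pairs lie in the same $\mathrm{Mod}(S_g)$-orbit if and only if their coloured ribbon graphs are isomorphic, so $N(g)$ is the number of isomorphism classes of coloured ribbon graphs on $S_g$ with this structure in which the $\alpha$-cycle is separating.

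For the upper bound, I would introduce pointed filling pairs by marking an oriented arc of $\alpha$ at one of its intersections with $\beta$, and encode each pointed pair by tracing $\beta$ from the base point and recording the sequence of intersections visited. Analysing the admissible words, the initial transition admits at most $2(2g-2)$ possibilities (the direction of travel together with the side of $\alpha$ first entered), while the $(2g-2)!$ factor bounds the number of orderings of the remaining crossings along $\beta$. Since every $\mathrm{Mod}(S_g)$-orbit admits at least one pointing, this yields the claimed upper bound $N(g)\le 2(2g-2)(2g-2)!$.

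For the lower bound, I would build on the inductive handle-attachment construction from the proof of Theorem~\ref{theorem:2}. Starting from an explicit minimal separating filling pair on $S_4$, the passage from $S_{2k+2}$ to $S_{2k+4}$ can be implemented in at least $3k+5$ combinatorially distinct ways, producing at least $\prod_{k=1}^{(g-4)/2}(3k+5)$ pointed filling pairs on $S_g$. To convert this into a lower bound on unpointed $\mathrm{Mod}(S_g)$-orbits, one divides by the overcounting: a factor $(2g)^2$ accounts for the choices of pointing (a marked intersection on each of the two curves), a factor $4$ for the two orientations of each curve, and a factor $((g-4)/2)!$ for the freedom to reorder the handle-attachment steps that yield the same final pair.

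The principal obstacle is the lower bound: one must verify that the filling pairs produced by genuinely different handle-attachment sequences are $\mathrm{Mod}(S_g)$-inequivalent, beyond the symmetries already divided out. I would address this by attaching to each coloured ribbon graph a $\mathrm{Mod}(S_g)$-invariant combinatorial signature — for example, the pair of boundary words, read in the two edge colours, of its two polygonal faces together with the gluing pattern — and then showing this signature detects the distinctions introduced at each inductive step. Establishing this invariant-based separation, rather than counting per se, is the technical heart of the proof.
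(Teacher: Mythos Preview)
Your overall architecture matches the paper's: encode filling pairs combinatorially to get the upper bound, and use the inductive $Z$-piece (handle-attachment) construction from Theorem~\ref{theorem:2} to get the lower bound. But in both halves the key technical mechanism you propose differs from, and is weaker than, what the paper actually does.

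\textbf{Upper bound.} The paper does not trace $\beta$ and record an intersection word. Instead it associates to a labelled filling pair a \emph{filling permutation} $\phi^{(\alpha,\beta)}\in\Sigma_{8g}$ encoding how the two $4g$-gons glue, shows it satisfies $\phi Q^{4g}\phi=\tau$ (Proposition~\ref{prop_of_phi_alpha_beta}), and rewrites this as $\phi=Q^{4g}C$ with $C^2=Q^{4g}\tau$ (Lemma~\ref{lemma:6.1}). Since $Q^{4g}\tau$ is a product of $4g$ explicit transpositions, its square roots are obtained by pairing transpositions and interleaving them; the structural constraints (1)--(5) of Proposition~\ref{prop_of_phi_alpha_beta} cut this down to at most $2(2g)!$, a further exclusion of those with $\phi^2(1)=1$ gives $4g(2g-2)(2g-2)!$, and dividing by at least $2g$ relabelling permutations from Lemma~\ref{lemma11.3} yields the bound. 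Your ``initial transition has $2(2g-2)$ possibilities, remaining crossings have $(2g-2)!$ orderings'' is not substantiated: there are $2g$ crossings, not $2g-2$, and you have not explained which constraints kill the missing factors. The square-root count is where the specific numerology $2(2g-2)(2g-2)!$ actually comes from.

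\textbf{Lower bound.} Here you are close in spirit, and your accounting of the factors $4$, $(2g)^2$, and $\left(\frac{g-4}{2}\right)!$ agrees with the paper. The gap is the step you flag yourself: showing that distinct attachment sequences give $\mathrm{Mod}(S_g)$-inequivalent pairs. The paper does not use a global ``signature''. It proves a local rigidity lemma (Lemma~\ref{lemma_equality_of_Z_piece}): two $Z$-pieces on the same filling pair whose interior $\alpha$-arcs overlap must coincide. This means $Z$-pieces can be unambiguously located and \emph{removed}, so the map $\mathscr{A}_g\to\mathscr{O}_g$ sending an attachment sequence to the resulting oriented filling pair is injective by downward induction (remove the last $Z$-piece, compare, repeat). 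Your proposed boundary-word invariant would have to detect exactly this, but you have not said what it is or why it separates the constructions; the $Z$-piece lemma is the concrete replacement you are missing.
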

The moduli space $\mathcal{M}_g$ of genus $g\geq 2$ is the collection of all hyperbolic metrics on $S_g$ up to isometry. As in \cite{aougab2015minimally}, we construct a topological Morse function on $\mathcal{M}_g$ using the length function of a filling pair. Consider the set, $$ \mathcal{C}_g=\{(\alpha,\beta): (\alpha,\beta)\text{ is a minimally intersecting  separating filling pair on } S_g\}.$$
For $X\in \mathcal{M}_g$, the length $l_X(\alpha,\beta)$ of $(\alpha,\beta)\in \mathcal{C}_g$ is defined as $l_X(\alpha,\beta)=l_X(\alpha)+l_X(\beta),$ where $l_X(\alpha)$ is the length of the geodesic in the free homotopy class of $\alpha$ with respect to the hyperbolic metric X. Now, we define a function $\mathcal{F}_g :\mathcal{M}_g \longrightarrow \mathbb{R}$, by $$
    \mathcal{F}_g(X)=\min \{ l_X(\alpha,\beta) | (\alpha,\beta) \in \mathcal{C}_g\}.$$
We show that, 
\begin{theorem}\label{theorem_lenth_function}
For $g\geq4$, $\mathcal{F}_g$ is a proper and topological Morse function. For any $X\in \mathcal{M}_g$, we have 
\begin{center}
    $\mathcal{F}_g\geq m_g,$
\end{center}
where 
\begin{align*}
m_g =4g\times \cosh ^{-1}\left( 2\left[\cos \left(\frac{2\pi}{4g}\right)+\frac{1}{2}\right]\right)
\end{align*}
denotes the perimeter of a regular right-angled hyperbolic $4g$-gon.\\
Suppose $\mathcal{B}_g=\{X\in\mathcal{M}_g:\mathcal{F}_g(X)=m_g\}$. Then $\mathcal{B}_g$ is a finite set and $|\mathcal{B}_g|=N(g)$. If $X\in \mathcal{B}_g$, the injectivity radius $r_{\mathrm{inj}}(X)$ of $X$ satisfies
$$r_{\mathrm{inj}}(X)\geq\frac{1}{2}\times \cosh^{-1}\left[ 8\cos^3\left(\frac{2\pi}{4g}\right)+8\cos^2\left(\frac{2\pi}{4g}\right)-1\right].$$
\end{theorem}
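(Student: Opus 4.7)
The plan is to address the four assertions of the theorem in turn. For \emph{properness}, I would use Mumford's compactness theorem together with the collar lemma. If $\{X_n\} \subset \mathcal{M}_g$ leaves every compact set, then $\mathrm{sys}(X_n) \to 0$, so each $X_n$ carries a simple closed geodesic $\gamma_n$ with $l_{X_n}(\gamma_n) \to 0$. Any filling pair $(\alpha, \beta)$ must intersect $\gamma_n$, since otherwise $\gamma_n$ would lie inside one of the two complementary disks. By the collar lemma, the embedded tubular neighborhood of $\gamma_n$ has width $w(\gamma_n) \to \infty$, so each transverse crossing forces an arc of length at least $2w(\gamma_n)$; therefore $l_{X_n}(\alpha) + l_{X_n}(\beta) \to \infty$ uniformly in $(\alpha,\beta) \in \mathcal{C}_g$, and $\mathcal{F}_g(X_n) \to \infty$. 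For the \emph{topological Morse} property, $\mathcal{F}_g$ is a pointwise minimum of real-analytic length functions $l_X(\alpha) + l_X(\beta)$, each smooth on $\mathcal{M}_g$; properness together with a counting argument shows only finitely many pairs realize the minimum near any point, and classical results on ``min'' of length functions (Thurston, Schmutz~Schaller, Akrout) supply the required topological Morse charts.

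For the \emph{lower bound} $\mathcal{F}_g \geq m_g$, fix $X \in \mathcal{M}_g$ and any realizer $(\alpha,\beta) \in \mathcal{C}_g$. Straightening $\alpha, \beta$ to geodesics preserves minimal position, and the complement consists of two hyperbolic polygonal disks $D_1, D_2$ with $n_i$ sides. Since each of the $2g$ intersections contributes four corners overall, $n_1 + n_2 = 8g$, and by Gauss--Bonnet $A_1 + A_2 = (4g-4)\pi$ where $A_i = \mathrm{Area}(D_i)$. Each edge of $\alpha \cup \beta$ bounds exactly two disk-faces (counted with multiplicity), so
\[
l_X(\alpha) + l_X(\beta) \;=\; \tfrac{1}{2}\bigl(\mathrm{Per}(D_1) + \mathrm{Per}(D_2)\bigr).
\]
The hyperbolic isoperimetric inequality for $n$-gons (Bezdek, Fejes~T\'oth) gives $\mathrm{Per}(D_i) \geq p(n_i, A_i)$, where $p(n,A)$ is the perimeter of the regular hyperbolic $n$-gon of area $A$. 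A joint optimization of $p(n_1,A_1) + p(n_2,A_2)$ subject to $n_1 + n_2 = 8g$, $A_1 + A_2 = (4g-4)\pi$, using convexity/monotonicity of $p$, is minimized symmetrically at $n_1 = n_2 = 4g$, $A_1 = A_2 = (2g-2)\pi$. The regular $4g$-gon of area $(2g-2)\pi$ has interior angle $\pi/2$, so it is the regular right-angled $4g$-gon of perimeter $m_g$; hence $\mathrm{Per}(D_1) + \mathrm{Per}(D_2) \geq 2 m_g$ and $l_X(\alpha) + l_X(\beta) \geq m_g$.

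For the \emph{finiteness} of $\mathcal{B}_g$ and the equality $|\mathcal{B}_g| = N(g)$: equality in the above forces both $D_1, D_2$ to be regular right-angled $4g$-gons. Conversely, any $(\alpha, \beta) \in \mathcal{C}_g$ determines a unique hyperbolic metric $X_{(\alpha,\beta)} \in \mathcal{M}_g$ obtained by gluing two regular right-angled $4g$-gons along the edge identification prescribed by the cell decomposition $\alpha \cup \beta$; the angle condition (four right angles per vertex) ensures the metric is smooth at the intersection points. The resulting map $\Phi : \mathcal{C}_g \to \mathcal{B}_g$ is surjective by the equality case, and $\Phi(f\cdot(\alpha,\beta))$ equals $\Phi(\alpha,\beta)$ in $\mathcal{M}_g$ for every $f \in \mathrm{Mod}(S_g)$, so $\Phi$ descends to $\bar{\Phi} : \mathcal{C}_g/\mathrm{Mod}(S_g) \to \mathcal{B}_g$. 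Injectivity follows because an isometry between $X_{(\alpha,\beta)}$ and $X_{(\alpha',\beta')}$ lifts to a mapping class element carrying one pair to the other. Thus $\bar{\Phi}$ is a bijection, and $|\mathcal{B}_g| = N(g) < \infty$ by Theorem~\ref{thm:1.3}.

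The \emph{main obstacle} is the isoperimetric step: one needs a clean form of the statement that among hyperbolic $n$-gons of fixed area $A$, the regular $n$-gon minimizes perimeter, together with the two-variable joint optimization over $(n_1, A_1, n_2, A_2)$ that pins down the symmetric minimizer. Verifying the convexity/monotonicity of $p(n,A)$ against the linear constraints is the most delicate ingredient; the remaining pieces assemble standard tools from hyperbolic geometry, Teichm\"uller theory, and the combinatorics of filling pairs established in the earlier theorems.
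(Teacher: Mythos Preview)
Your treatment of properness and the Morse property matches the paper's, and the overall architecture of the argument is the same. Two points deserve comment, one a simplification and one a genuine gap.

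\textbf{The lower bound.} You set up a two-variable optimization over $(n_1,A_1,n_2,A_2)$ with constraints $n_1+n_2=8g$ and $A_1+A_2=(4g-4)\pi$, and you flag this as the main obstacle. It is unnecessary. For a minimally intersecting \emph{separating} filling pair, the earlier structure results already pin down both parameters: Proposition~\ref{prop_of_phi_alpha_beta}(2) shows the filling permutation is a product of two $4g$-cycles, so $n_1=n_2=4g$; and since $\alpha$ separates $S_g$ into two pieces of equal genus (Proposition~\ref{prop:4.1}), each polygon picks up two supplementary angles at every one of the $2g$ intersection points, giving total interior angle $2g\pi$ in each and hence $A_1=A_2=(2g-2)\pi$ by Gauss--Bonnet. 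The paper simply applies Bezdek's isoperimetric inequality to each $4g$-gon of area $(2g-2)\pi$ separately, with no joint minimization required. Your route would work if the convexity claim for $p(n,A)$ could be verified, but there is no need to go there.

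\textbf{Injectivity of $\bar\Phi$.} Here there is a real gap. You write that ``an isometry between $X_{(\alpha,\beta)}$ and $X_{(\alpha',\beta')}$ lifts to a mapping class element carrying one pair to the other,'' but this is exactly the nontrivial point. An isometry $\psi:X_{(\alpha,\beta)}\to X_{(\alpha',\beta')}$ gives a mapping class, and $\psi(\alpha,\beta)$ is \emph{some} length-$m_g$ separating filling pair on $X_{(\alpha',\beta')}$, but nothing you have said forces it to equal $(\alpha',\beta')$. What is needed is: on a fixed $X\in\mathcal{B}_g$, any two length-$m_g$ minimal separating filling pairs are related by a self-homeomorphism of $X$. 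The paper proves this as a separate proposition by lifting each pair to a tiling of $\mathbb{H}$ by regular right-angled $4g$-gons, computing the associated Delaney--Dress symbols, observing that they are isomorphic (the combinatorial invariants $m_{0,1}=4g$, $m_{1,2}=4$, $m_{0,2}=2$ and $|\mathscr{D}|=16g$ coincide), and invoking Huson's theorem that isomorphic Delaney--Dress symbols force equivariantly equivalent tilings. The resulting equivariant homeomorphism of $\mathbb{H}$ descends to the desired self-homeomorphism of $X$. Without an argument of this type, $\bar\Phi$ is not shown to be injective.
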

\section{Fat Graphs}

In this section, we recall some definitions and notations on fat graphs which are essential in the subsequent sections. We begin with recalling the definition of a graph. The definition of graph used here is not the standard one, but it is straightforward to see that it is equivalent to the standard one. Such a definition is convenient for defining fat graphs.
\begin{definition}
A graph $G$ is a triple $G=(E,\sim,\sigma_1)$, where
\begin{enumerate}
    \item $E=\{\vec{e}_1,\cev{e}_1,\dots,\vec{e}_n,\cev{e}_n\}$ is a finite, non-empty set, called the set of directed edges.
    \item $\sim$ is an equivalence relation on $E$.
    \item $\sigma_1:E\to E$, a fixed point free involution, maps a directed edge to its reverse directed edge, i.e., $\sigma_1(\vec{e_i})=\cev{e_i},$ for all $\vec{e_i}\in E$.
\end{enumerate}
\end{definition}
 
The equivalence relation $\sim$ is defined by $\vec{e}_1\sim\vec{e}_2$ if $\vec{e}_1$ and $\vec{e}_2$ have same initial vertex. In an ordinary language, $V=E/\sim$ is the set of all vertices and $E/\sigma_1$ is the set of un-directed edges of the graph. The number of edges incident at a vertex is called the \textit{degree} of the vertex (for more details, we refer to~\cite{sanki2018filling}, Section 2). Now, we define fat graphs in the following.
\begin{definition}
A \textit{fat graph} is a quadruple $\Gamma=(E,\sim,\sigma_1,\sigma_0)$, where
\begin{enumerate}
    \item $G=(E,\sim,\sigma_1)$ is a graph.
    \item $\sigma_0$ is a permutation on E so that each cycle corresponds to a cyclic order on the set of oriented edges going out from a vertex.
\end{enumerate}
\end{definition}
\begin{definition}
    A fat graph is called \textit{decorated} if the degree of each vertex is even and at least $4$.
\end{definition}
\begin{definition}
    A simple cycle in a decorated fat graph is called a \textit{standard cycle} if every two consecutive edges in the cycle are opposite to each other in the cyclic order on the set of edges incident at their common vertex. If a cycle is not standard,
    we call it as non-standard.
\end{definition}
\textbf{Surface associated to a fat graph.}
Given a fat graph $\Gamma=(E, \sim, \sigma_1, \sigma_0)$, we construct an oriented topological surface $\Sigma(\Gamma)$ with boundary as follows: Consider a closed disk corresponding to each vertex and a rectangle corresponding to each un-directed edge. Then identify the sides of the rectangles with the boundary of the disks according to the order of the edges incident to the vertex. See Figure~\ref{loc_pic_of_fat_graph} for a local picture. The boundary of a fat graph $\Gamma$ is defined as the boundary of the surface $\Sigma(\Gamma).$
\tikzset{->-/.style={decoration={
  markings,
  mark=at position #1 with {\arrow{>}}},postaction={decorate}}}
  
  \tikzset{-<-/.style={decoration={
  markings,
  mark=at position #1 with {\arrow{<}}},postaction={decorate}}}

\begin{figure}[htbp]
\begin{center}
\begin{tikzpicture}[xscale=1,yscale=1]
\draw (-1.5,0)--(1.5,0);
\draw (0,1.5)--(0,-1.5);
\draw[->-=1] (0.6062,.35) arc (30:240:.7);
\draw (5,0) circle (.7cm);
\draw (3,.2)--(4.34,.2);
\draw (3,-.2)--(4.34,-.2);
\draw (5.66,.2)--(7,.2);
\draw (5.66,-.2)--(7,-.2);
\draw (4.8,.66)--(4.8,2);
\draw (5.2,.66)--(5.2,2);
\draw (4.8,-.66)--(4.8,-2);
\draw (5.2,-.66)--(5.2,-2);
\end{tikzpicture}
\end{center}
 \caption{Local picture of the surface obtained from a fat graph.} 
\label{loc_pic_of_fat_graph}
\end{figure}
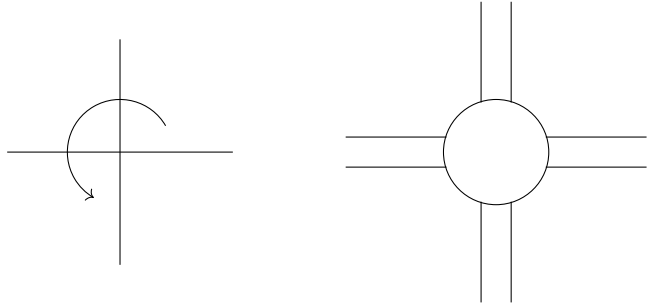

\begin{example}
Let us consider the fat graph $\Gamma=(E,\sim,\sigma_1,\sigma_0)$, where
\begin{enumerate}[label=(\roman*)]
    \item $E=\{\vec{e}_i,\cev{e}_j|i,j=1,2,3\}$, is the set of all directed edges and $E_1=\{e_1, e_2, e_3\}$ is the set of all un-directed edges, where $e_i=\{\vec{e}_i, \cev{e}_i\}, i=1,2,3.$
    \item $V=\{v_1, v_2\}$, where $v_1=\{\vec{e}_1,\vec{e}_2,\vec{e}_3\}$ and $v_2=\{\cev{e}_1,\cev{e}_2,\cev{e}_3\}$, is the set of vertices.
    \item $\sigma_1(\vec{e_i})=\cev{e_i}$, $i=1,2,3,$ and
    \item $\sigma_0=(\vec{e}_1,\vec{e}_3,\vec{e}_2)(\cev{e}_1,\cev{e}_3,\cev{e}_2)$.
\end{enumerate}  

The fat graph $\Gamma$ and the associated surface $\Sigma(\Gamma)$ are given in Figure~\ref{fat_graph_example_two_vertices}. A simple Euler characteristic argument and topological classification of surfaces imply that $\Sigma(\Gamma)$ is homeomorphic to a compact surface of genus one and with a single boundary component. We also observe that if the boundary of the surface is labelled with the convention as in Figure \ref{fat_graph_example_two_vertices}, the boundary corresponds to the cycles of $\sigma_0^{-1}\sigma_1$ which is $(\vec{e}_1,\cev{e}_2,\vec{e}_3,\cev{e}_1,\vec{e}_2,\cev{e}_3)$. We generalize this observation in Lemma \ref{lemma:2.6}.

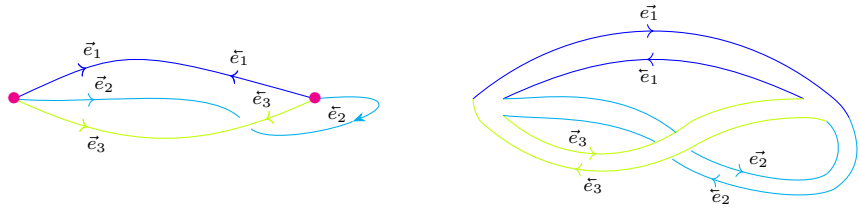
\begin{figure}[htbp]
\begin{center}
\begin{tikzpicture}[xscale=1, yscale=1]
\draw[cyan][->-=.35] (0,0)..controls(1,-0.1)and(2.5,.2)..(3,-.25);
\draw[cyan][-{Stealth[color=cyan]}] (4,0)..controls(5,.1)and(5,-.1)..(4.5,-.3);
\draw[cyan] (3.15,-.4)..controls(3.4,-.6)and(4.3,-.4)..(4.5,-.3);
\draw[blue, ->-=.25, -<-=.74] (0,0)..controls(1.5,.7)and(1.5,.7)..(4,0);

\draw[lime, ->-=.25, -<-=.85] (0,0)..controls(1.5,-.7)and(2.5,-.7)..(4,0);
\draw[magenta] (0,0) node{$\bullet$} (4,0) node{$\bullet$};
\draw[blue, -<-=.5] (6.5,0)..controls(8,.7)and(9,.7)..(10.5,0);
\draw[blue, ->-=.5] (6.1,0)to[bend left=40](10.9,0);
\draw[lime, ->-=.5] (6.5,-.22) to [bend right=40] (9,-.3);
\draw[cyan](6.5,0)..controls(7,0)and(8,.2)..(8.8,-.45);
\draw[cyan](6.5,-.22)..controls(7,-.28)and(7.8,-.2)..(8.52,-.57);
\draw[cyan, ->-=.4](9,-.69)..controls(9.5,-1)and(10.5,-1.2)..(10.8,-1);
\draw[cyan](10.8,-1)to [bend right=50] (10.8,-.3);
\draw[cyan, -<-=.3](8.75,-.8)..controls(9.3,-1.2)and(10.5,-1.4)..(10.9,-1.2);
\draw[cyan](10.9,-1.2)..controls(11.2,-1)and(11.3,-.7)..(11.1,-.25);
\draw[lime, -<-=.5] (6.2,-.3) to [bend right=35] (9,-.65);
\draw[lime](9,-.65)..controls(9.5,-.35)and(10,-.25)..(10.5,-.25);
\draw[lime] (9,-.3)..controls(9.5,-0.05)and(10,0)..(10.5,0);
\draw[lime] (6.1,0) to[bend right=15](6.2,-.3);
\draw[lime] (10.8,-.3) to[bend right=10](10.5,-.25);
\draw[blue](10.9,0)to[bend left=10](11.1,-.25);

\draw (1.05,.65) node{\tiny$\vec{e}_1$};
\draw (1.2,.2) node{\tiny$\vec{e}_2$};
\draw (1.1,-.6) node{\tiny$\vec{e}_3$};
\draw (3,.55) node {\tiny{$\cev{e_1}$}} (3.3,0) node{\tiny$\cev{e_3}$} (4.3,-.2) node{\tiny$\cev{e_2}$};

\draw (8.45,1.15) node {\tiny{$\vec{e_1}$}} (8.45,.3) node{\tiny$\cev{e}_1$} (7.5,-.52) node{\tiny$\vec{e}_3$} (7.7,-1.15) node{\tiny$\cev{e_3}$} (9.9,-.8) node{\tiny$\vec{e_2}$} (9.4,-1.3) node{\tiny$\cev{e}_2$};

\end{tikzpicture}
\end{center}
\caption{Example of fat graph and associated surface.}
\label{fat_graph_example_two_vertices}
\end{figure}

\end{example}

\begin{lemma}\label{lemma:2.6}
The number of boundary components of a fat graph $\Gamma=(E, \sim, \sigma_1, \sigma_0)$ is the same as the number of disjoint cycles in $\sigma_0^{-1}\sigma_1$.
\end{lemma}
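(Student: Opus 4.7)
The plan is to realize the boundary $\partial\Sigma(\Gamma)$ as a disjoint union of arcs indexed by the directed edges in $E$, and then identify the ``follow the boundary'' bijection on $E$ with the permutation $\sigma_0^{-1}\sigma_1$. Since each boundary circle of $\Sigma(\Gamma)$ is traced out by iteratively applying this bijection, the set of boundary components will then be in bijection with the disjoint cycles of $\sigma_0^{-1}\sigma_1$.

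First I would set up the labelling. Each undirected edge $e=\{\vec{e},\cev{e}\}$ contributes a rectangle $R_e$ whose two long sides lie on $\partial\Sigma(\Gamma)$. Using the ambient orientation of $\Sigma(\Gamma)$, I label the long side of $R_e$ lying to the left of $\vec{e}$ by the symbol $\vec{e}$, and the other long side (which lies to the left of $\cev{e}$) by $\cev{e}$. This gives a bijection between $E$ and the collection of arcs that constitute $\partial\Sigma(\Gamma)$ after gluing.

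Next I would compute the successor permutation. Traversing the arc labelled $\vec{e}$ in the direction of $\vec{e}$, one enters the disc $D_{t(\vec{e})}=D_{s(\cev{e})}$ along the side of $R_e$ that corresponds, at this vertex, to the outgoing half-edge $\cev{e}$. Following $\partial D_{s(\cev{e})}$ in the direction dictated by the orientation, the next rectangle met is the one attached immediately before $\cev{e}$ in the cyclic order at $s(\cev{e})$; by the very definition of $\sigma_0$, this is the edge $\sigma_0^{-1}(\cev{e})=\sigma_0^{-1}\sigma_1(\vec{e})$. We then emerge travelling along the long side of that rectangle labelled $\sigma_0^{-1}\sigma_1(\vec{e})$ in its forward direction. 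Thus the ``next arc'' map on $E$ is exactly $\sigma_0^{-1}\sigma_1$, and its cycles enumerate the boundary circles.

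The main obstacle, and essentially the only place where a mistake can creep in, is the bookkeeping of orientations: which long side of $R_e$ carries which of the two labels $\vec{e},\cev{e}$, and whether the cyclic order encoded by $\sigma_0$ is read clockwise or counterclockwise as viewed from the surface. Opposite choices would produce $\sigma_0\sigma_1$ instead of $\sigma_0^{-1}\sigma_1$. To pin down the convention I would cross-check against the worked example: for the fat graph of Figure~\ref{fat_graph_example_two_vertices}, the boundary word $(\vec{e}_1,\cev{e}_3,\vec{e}_2,\cev{e}_1,\vec{e}_3,\cev{e}_2)$ agrees with the unique cycle of $\sigma_0^{-1}\sigma_1$ computed from $\sigma_0=(\vec{e}_1,\vec{e}_2,\vec{e}_3)(\cev{e}_1,\cev{e}_2,\cev{e}_3)$ and $\sigma_1=\prod_i(\vec{e}_i,\cev{e}_i)$, confirming the orientation conventions and completing the argument.
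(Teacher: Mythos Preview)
Your argument is correct: labelling the two long sides of each edge-rectangle by the two orientations of the edge and then computing the ``next arc'' map along $\partial\Sigma(\Gamma)$ is exactly the standard boundary-word computation for ribbon graphs, and your sanity check against the example pins down the convention so that the answer is $\sigma_0^{-1}\sigma_1$ rather than $\sigma_0\sigma_1$. The paper itself does not give a proof at all---it simply cites Lemma~2.3 of \cite{sanki2018filling}---so your write-up supplies what the paper omits, and the argument you give is the same one that appears in that reference.
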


\begin{proof}
For proof see Lemma 2.3 of \cite{sanki2018filling}.
\end{proof}

\section{Classification of curves on fat graphs}

Decorated fat graphs play an important role in our construction of fillings on surfaces. Such a graph can be written as an edge-disjoint union of its standard cycles. The set of standard cycles corresponds to a set of simple closed curves that gives a  filling. In this section, we classify the standard cycles of a decorated fat graph in Theorem~\ref{classif_of_std_cycle_in_fat_graph}. Before going to the theorem directly, let us consider the following motivating example.
\begin{example}\label{example}
Consider the filling pair $(\alpha, \beta)$ on $S_2$ as shown in Figure~\ref{sep_fill_on_S_2} and the corresponding fat graph $\Gamma(\alpha,\beta)=(E,\sim,\sigma_1,\sigma_0)$ as described below (see Figure~\ref{fatgraph_for_fill_on_S_2}): 
\begin{enumerate}
    \item $E=\{x_i,x_i^{-1},y_i,y_i^{-1}\mid i=1,\dots,6\}$.
    \item The equivalence classes of $\sim$ are 
    \begin{align*}
        v_1&=\{x_1,y_6^{-1},x_6^{-1},y_1\}, \ \ \ \ \ v_4=\{x_4,y_4,x_3^{-1},y_3^{-1}\}\\
        v_2&=\{x_2,y_2^{-1},x_1^{-1},y_3\}, \ \ \ \ \  v_5=\{x_5,y_2,x_4^{-1},y_1^{-1}\}\\
        v_3&=\{x_3,y_6,x_2^{-1},y_5^{-1}\}, \ \ \ \ \  v_6=\{x_6,y_4^{-1},x_5^{-1},y_5\}.
    \end{align*}
    \item $\sigma_1(x_i)=x_i^{-1}, \sigma_1(y_i)=y_i^{-1}, i=1,\dots,6.$
    \item $\sigma_0=\prod\limits_{i=1}^6\sigma_{v_i},$ where
    \begin{align*}
       \sigma_{v_1}&=(x_1,y_6^{-1},x_6^{-1},y_1), \ \ \ \ \ \sigma_{ v_4}=(x_4,y_4,x_3^{-1},y_3^{-1})\\
        \sigma_{v_2}&=(x_2,y_2^{-1},x_1^{-1},y_3), \ \ \ \ \ \sigma_{ v_5}=(x_5,y_2,x_4^{-1},y_1^{-1})\\
        \sigma_{v_3}&=(x_3,y_6,x_2^{-1},y_5^{-1}), \ \ \ \ \ \sigma_{v_6}=(x_6,y_4^{-1},x_5^{-1},y_5).
    \end{align*}
\end{enumerate}
In this graph, $\alpha=(x_1, \dots,x_6)$ and $\beta=(y_1,\dots,y_6)$ are the standard cycles.
The curve $\alpha$ is separating.

We construct a $6\times4$ matrix $M(\alpha,\beta)$ whose rows correspond to the cycles of the fat graph structure which are arranged such that edges $x_i$'s occur in the first position. For instance, the first row of $M(\alpha,\beta)$ is  $(x_1,y_6^{-1},x_6^{-1},y_1)$. The matrix is shown in Figure~\ref{fatgraph_matrix}. We call this matrix $M(\alpha,\beta)$ as the \emph{normal matrix} associated to the filling pair $(\alpha,\beta)$.
We observe that for each $i$, $y_i$ and $y_i^{-1}$ are in the same column.
\end{example}
For any fat graph with two standard cycles, its normal matrix is similarly defined. Furthermore, it has all the properties as discussed in Example~\ref{example}, when the two standard cycles are associated with a separating filling pair. Now, we are ready to state and prove the classification theorem.

\begin{figure}[htbp]
\begin{center}
\begin{tikzpicture}[xscale=1.5,yscale=1.5]

    \draw [blue] (0,0) ellipse (3cm and 1.2cm);
    \draw (-2,.05) to [bend right] (-1.2,.05);
    \draw (-2,.05) to [bend left] (-1.2,.05);
    
    \draw (1.2,.05) to [bend right] (2,.05);
    \draw (1.2,.05) to [bend left] (2,.05);
    
    \draw [red, -<-=.4,thick] (0,1.2) arc
        [
            start angle=90,
            end angle=270,
            x radius=.3cm,
            y radius =1.2cm
        ] ;
    \draw [dashed, red,thick] (0,-1.2) arc
        [
            start angle=270,
            end angle=450,
            x radius=.3cm,
            y radius =1.2cm
        ] ;
    \draw[red] (-.5,0) node {$\alpha$};
    \draw[green] (-1.3,-.8) node {$\beta$};
    \draw (1.1,.82) node {\footnotesize$y_1$};
    \draw (-1,-.32) node {\footnotesize$y_2$};
    \draw (2.8,0) node {\footnotesize$y_3$};
    \draw (-2,-.41) node {\footnotesize$y_4$};
    \draw (1.3,.27) node {\footnotesize$y_5$};
    \draw (-1,.88) node {\footnotesize$y_6$};
    
    \draw (-.3,1.1) node {\footnotesize$x_1$};
    \draw (.45,0) node {\footnotesize$x_2$};
    \draw (-.37,-.9) node {\footnotesize$x_3$};
    \draw (-.45,-.5) node {\footnotesize$x_4$};
    \draw (-.15,0) node {\footnotesize$x_5$};
    \draw (-.1,.67) node {\footnotesize$x_6$};
    
    \draw [cyan,-<-=.35,thick] (2.3,0) arc
        [
            start angle=0,
            end angle=180,
            x radius=2.5cm,
            y radius =.8cm
        ] ;
        
        \draw [cyan,thick] (-2.7,0) arc
        [
            start angle=180,
            end angle=258,
            x radius=2cm,
            y radius =1.12cm
        ] ;
    
    \draw [dashed, cyan,thick] (-1.2,-1.09) arc
        [
            start angle=265,
            end angle=360,
            x radius=2.2cm,
            y radius =1.12cm
        ] ;
    \draw [cyan,thick] (1.2,0) arc
        [
            start angle=0,
            end angle=260,
            x radius=1.7cm,
            y radius =.6cm
        ] ;
    \draw [cyan,thick] (-.8,-.59) arc
        [
            start angle=240,
            end angle=300,
            x radius=2.3cm,
            y radius =.5cm
        ] ;
    \draw [cyan,thick] (1.5,-.59) arc
        [
            start angle=300,
            end angle=360,
            x radius=2.3cm,
            y radius =.5cm
        ] ;
    \draw [cyan,thick] (2.65,-.18) arc
        [
            start angle=0,
            end angle=60,
            x radius=2.4cm,
            y radius =1.41cm
        ] ;
        
    \draw [dashed, cyan,thick] (1.45,1.045) arc
        [
            start angle=70,
            end angle=180,
            x radius=1.98cm,
            y radius =1.12cm
        ] ;
         
    \draw [cyan, thick] (-1.211,0) arc
        [
            start angle=180,
            end angle=360,
            x radius=1.752cm,
            y radius =.4cm
        ] ;
   
\end{tikzpicture}
\end{center}
\caption{Filling pair $(\alpha, \beta)$ on $S_2$} 
\label{sep_fill_on_S_2}
\end{figure}

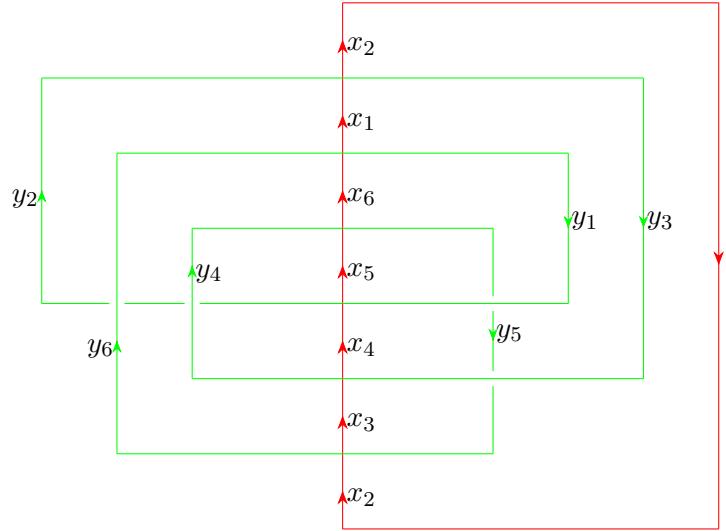
\begin{figure}[htbp]
\begin{center}
\begin{tikzpicture}[xscale=1,yscale=1]
    \draw[red] (0,0) to (0,7);
    \draw [green] (0,1) to (2,1);
    \draw [green] (0,2) to (4,2);
    \draw [green] (0,3) to (3,3);
    \draw [green] (0,4) to (2,4);
    \draw [green] (0,5) to (3,5);
    \draw [green] (0,6) to (4,6);
    \draw [green] (4,6) to (4,2);
    \draw [green] (3,3) to (3,5);
    \draw [green] (2,4) to (2,3.1);
    \draw [green] (2,2.9) to (2,2.1);
    \draw [green] (2,1.9) to (2,1);
    \draw[red] (0,0) to (5,0);
    \draw[red] (5,0) to (5,7);
    \draw[red] (5,7) to (0,7);
    \draw [green] (0,2) to (-2,2);
    \draw [green] (-2,2) to (-2,4);
    \draw [green] (-2,4) to (0,4);
    \draw [green] (0,1) to (-3,1);
    \draw [green] (-3,1) to (-3,5);
    \draw [green] (-3,5) to (0,5);
    \draw [green] (0,6) to (-4,6);
    \draw [green] (-4,6) to (-4,3);
    \draw [green] (-4,3) to (-3.1,3);
    \draw [green] (-2.9,3) to (-2.1,3);
    \draw [green] (-1.9,3) to (0,3);
    
    \draw[red] [-{Stealth[scale=1.1]}] (0,0.5) -- (0,0.51);
    \draw[red] [-{Stealth[scale=1.1]}] (0,1.5) -- (0,1.51);
    \draw[red] [-{Stealth[scale=1.1]}] (0,2.5) -- (0,2.51);
    \draw[red] [-{Stealth[scale=1.1]}] (0,3.5) -- (0,3.51);
    \draw [red][-{Stealth[scale=1.1]}] (0,4.5) -- (0,4.51);
    \draw [red][-{Stealth[scale=1.1]}] (0,5.5) -- (0,5.51);
    \draw [red][-{Stealth[scale=1.1]}] (0,6.5) -- (0,6.51);
    \draw[red] [-{Stealth[scale=1.1]}] (5,3.52) -- (5,3.51);
    
    \draw [red][-{Stealth[color=green]}] (4,4.01)--(4,4); 
    \draw[red] [-{Stealth[color=green]}] (3,4.01)--(3,4);
    \draw[red] [-{Stealth[color=green]}] (2,2.51)--(2,2.5);
    \draw [red][-{Stealth[color=green]}] (-3,2.5)--(-3,2.51);
    \draw [red][-{Stealth[color=green]}] (-4,4.5)--(-4,4.51);
    \draw[red] [-{Stealth[color=green]}] (-2,3.5)--(-2,3.51);
    
    \draw (.25,6.42) node {$x_2$};
    \draw (.25,5.42) node {$x_1$};
    \draw (.25,4.42) node {$x_6$};
    \draw (.25,3.42) node {$x_5$};
    \draw (.25,2.42) node {$x_4$};
    \draw (.25,1.42) node {$x_3$};
    \draw (.25,0.42) node {$x_2$};
    
    \draw (4.22,4.1) node{$y_3$};
    \draw (3.22,4.1) node{$y_1$};
    \draw (2.22,2.6) node{$y_5$};
    \draw (-3.22,2.4) node{$y_6$};
    \draw (-4.22,4.4) node{$y_2$};
    \draw (-1.78,3.4) node{$y_4$};
\end{tikzpicture}
\end{center}
\caption{Fat graph corresponding to the filling pair $(\alpha, \beta)$.}
\label{fatgraph_for_fill_on_S_2}
\end{figure}

\begin{figure}[htbp]
\begin{center}
\begin{tikzpicture}[xscale=1.5,yscale=1.5]
    \draw (2,-1) node {$y_3$} (-1,-1) node {$x_2$}  (0,-1) node {$y_2^{-1}$} (1,-1) node  {$x_1^{-1}$};
    \draw (2,-.5) node {$y_1$} (-1,-.5) node {$x_1$}  (0,-.5) node {$y_6^{-1}$} (1,-.5) node  {$x_6^{-1}$};
    \draw (2,-3) node {$y_5$} (-1,-3) node {$x_6$}  (0,-3) node {$y_4^{-1}$} (1,-3) node  {$x_5^{-1}$};
    \draw (2,-2.5) node {$y_1^{-1}$} (-1,-2.5) node {$x_5$}  (0,-2.5) node {$y_2$} (1,-2.5) node  {$x_4^{-1}$};
    \draw (2,-2) node {$y_3^{-1}$} (-1,-2) node {$x_4$}  (0,-2) node {$y_4$} (1,-2) node  {$x_3^{-1}$};
    \draw (2,-1.5) node {$y_5^{-1}$} (-1,-1.5) node {$x_3$}  (0,-1.5) node {$y_6$} (1,-1.5) node  {$x_2^{-1}$};
\end{tikzpicture}
\end{center}
\caption{The normal matrix $M(\alpha,\beta)$.} 
\label{fatgraph_matrix}
\end{figure}
\begin{theorem}\label{classif_of_std_cycle_in_fat_graph}
    Let $\Gamma=(E,\sim,\sigma_1,\sigma_0)$ be a fat graph with two standard cycles $\alpha=(x_1, \dots,x_n)$ and $\beta=(y_1,\dots,y_n)$. The cycle $\alpha$ is separating if and only if  $y_i$ and $y_i^{-1}$ are in the same column of the normal matrix $M(\alpha,\beta)$, for every $i=1,\dots,n$.
\end{theorem}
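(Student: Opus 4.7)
The plan is to analyze the connected components of $\Sigma(\Gamma)\setminus\alpha$ directly and to match them to the column structure of the normal matrix $M(\alpha,\beta)$. Each row of $M(\alpha,\beta)$ records the cyclic order $(x_k,\,c_{k,2},\,x_{k-1}^{-1},\,c_{k,4})$ at the vertex $v_k$, where $c_{k,2}$ and $c_{k,4}$ (the column-$2$ and column-$4$ entries) are $\beta$-edges. Because $\alpha$ is a standard cycle, at every vertex the two $\alpha$-edges $x_k$ and $x_{k-1}^{-1}$ occupy opposite positions in the cyclic order. Consequently $\alpha$, viewed as a curve in $\Sigma(\Gamma)$, passes through each vertex disc along a chord that divides it into two half-discs, one containing the column-$2$ slot and the other the column-$4$ slot.

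I would then verify, using the surface orientation that fixes the cyclic order, that removing the centre line $\alpha$ from the $x_k$-rectangle splits it into two half-strips so that the half-strip on the column-$2$ side joins the column-$2$ half-disc at $v_k$ to the column-$2$ half-disc at $v_{k+1}$, and analogously on the column-$4$ side. Walking around the cycle $\alpha$, one obtains that the union $L$ of all column-$2$ half-discs together with all column-$2$ half-strips is path-connected, as is the analogous union $R$ on the column-$4$ side, and that $L\cap R=\emptyset$. The remaining pieces of $\Sigma(\Gamma)\setminus\alpha$ are exactly the rectangles corresponding to the $\beta$-edges $y_j$, and the $y_j$-rectangle attaches at the column where $y_j$ sits in its starting row and at the column where $y_j^{-1}$ sits in its ending row.

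If for every $j$ the entries $y_j$ and $y_j^{-1}$ lie in the same column, each $y_j$-rectangle is glued entirely to $L$ or entirely to $R$; hence $\Sigma(\Gamma)\setminus\alpha=L\sqcup R$ is disconnected, so $\alpha$ is separating in $\Sigma(\Gamma)$. Conversely, if some $y_j$ has its two occurrences in different columns, the corresponding rectangle is a tube linking a point of $L$ to a point of $R$, merging them into a single component and making $\Sigma(\Gamma)\setminus\alpha$ connected. To pass from $\Sigma(\Gamma)$ to the closed surface $S_g$ one caps off each boundary component of $\Sigma(\Gamma)$ with a disc; these caps are disjoint from $\alpha$ and each is attached to a single component of $\Sigma(\Gamma)\setminus\alpha$, so the number of components of $S_g\setminus\alpha$ equals that of $\Sigma(\Gamma)\setminus\alpha$. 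This yields the desired equivalence.

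The principal obstacle is the orientation bookkeeping in the first two paragraphs: one must check carefully that the normal-matrix convention of placing $x_k$ in column $1$ (which, via the standard-cycle property, forces some $x_j^{-1}$ into column $3$) together with the fixed cyclic orientation at each vertex really produces a globally consistent $2$-colouring of the two sides of $\alpha$ by ``column $2$'' vs.\ ``column $4$''. Once this local-to-global consistency along the cycle $\alpha$ is established, the remainder of the argument is an elementary connectivity count for the pieces of the fat-graph surface.
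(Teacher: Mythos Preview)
Your argument is correct, and the orientation bookkeeping you flag as the main obstacle does work out: walking along $x_k$ from $v_k$ to $v_{k+1}$, the side adjacent to $c_{k,2}$ at the start is adjacent to $c_{k+1,2}$ at the end (because in the cyclic order at $v_{k+1}$ the entry following $x_k^{-1}$ is $c_{k+1,4}$, which lies on the \emph{right} as you traverse $x_k$ forward), so the column-$2$/column-$4$ labelling is globally consistent along the cycle $\alpha$.

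The paper takes a different, more combinatorial route. Rather than cutting only along $\alpha$, it cuts $S$ along $\alpha\cup\beta$ into polygons (equivalently, boundary cycles of $\sigma_0^{-1}\sigma_1$) and then observes that $\alpha$ is separating if and only if re-gluing these polygons along just the $y$-edges leaves the result disconnected. The key step is a short computation with $\sigma_0^{-1}\sigma_1$ showing that within any single boundary cycle all $y$-entries come from the same column of $M(\alpha,\beta)$; the polygons then split into a column-$2$ family $D_2$ and a column-$4$ family $D_4$, which are the two components. Your approach bypasses the polygon decomposition and the permutation calculation by reading the two sides of $\alpha$ directly off the ribbon structure; this is more geometric and makes the converse direction (one $y_j$ with mismatched columns gives a tube joining $L$ to $R$) completely explicit, whereas the paper simply asserts that ``the above process is reversible.'' Both arguments ultimately encode the same column-$2$/column-$4$ dichotomy; the paper's version has the advantage that the consistency check you worry about is absorbed into a two-line chase through $\sigma_0^{-1}\sigma_1$, while yours has the advantage of being a direct connectivity count that does not require introducing the boundary-cycle permutation.
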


\begin{proof}
    Consider the surface $\Sigma(\Gamma)$, associated to the fat graph $\Gamma$. We construct a closed and oriented surface $S$  from $\Sigma(\Gamma)$ by attaching a disk along each boundary component. The simple closed curves on $S$ corresponding to the standard cycles $\alpha$ and $\beta$ are again denoted by $\alpha$ and $\beta$ respectively. If we cut the surface along $(\alpha \cup \beta)$, then we get a finite number of polygons with labelled boundary corresponding to the cycles of the permutation $\sigma_0^{-1}\sigma_1$. The curve $\alpha$ is separating if and only if whenever the polygons are identified along the edges $y_i$ and $y_i^{-1}$, then the resulting surface is still disconnected. 
    
    ($\Leftarrow$) Assume that the normal matrix $M(\alpha,\beta)$ is as in Theorem~\ref{classif_of_std_cycle_in_fat_graph}. We show that $\alpha$ is separating. Consider the disjoint cycle representation of the permutation $\sigma_0^{-1}\sigma_1$. We claim that the $y$-edges contained in each cycle are either from the second or fourth column of $M(\alpha,\beta)$,  but not from the both. The entries in each cycle are alternatively coming from $x$-edges and $y$-edges. Consider a generic cycle $(y_i,x_j,y_t,\dots)$ of the permutation $\sigma_0^{-1}\sigma_1$. To prove the claim, it suffices to show that $y_i$ and $y_t$ are in the same column. Suppose $y_i$ is in the second column, then $\sigma_1(y_i)=y_i^{-1}$ is also in the second column, by assumption on matrix $M(\alpha,\beta)$. Therefore, $\sigma_0^{-1} \left(y_i^{-1}\right) =x_j$ is in first column and $\sigma_1(x_j)=x_j^{-1}$ in third column, by the properties of $M(\alpha,\beta)$. Now, it follows that $\sigma_0^{-1}(x_j^{-1})$ is in the second column (see Figure~\ref{general_fat_graph_matrix} and Figure~\ref{multiplication_rule}). In case $y_i$ is in fourth column, a similar argument shows that $y_t$ is also in fourth column.
    
    Suppose $D_2$ and $D_4$ be the collection of the polygons whose y-edges are in second and fourth column respectively. Then, both $D_2$ and $D_4$ are nonempty disjoint sets. Therefore, after identifying the polygons along the y-edges, we get at least two disconnected sets. Hence, $\alpha$ is a separating curve. 
    
    ($\Rightarrow$) The above process is reversible and this completes the proof.
    \begin{figure}[htbp]\label{x}
    \begin{center}
    \begin{tikzpicture}[xscale=1.5,yscale=1.5]
        \draw (-2,0) node {C1} (-1,0) node {C2}  (0,0) node {C3} (1,0) node  {C4};
        \draw (-2,0) node {C1} (-1,0) node {C2}  (0,0) node {C3} (1,0) node  {C4};
        \draw (-2,-1) node {$x_1$} (-1,-1) node {-}  (0,-1) node {$x_{n}^{-1}$} (1,-1) node  {-};
        \draw (-2,-1.5) node {$x_2$} (-1,-1.5) node {-}  (0,-1.5) node {$x_{1}^{-1}$} (1,-1.5) node  {-};
        \draw (-1.5,-1.8) node {$\vdots$} (0,-1.8) node {$\vdots$};
        \draw (-2,-2.3) node {$x_r$} (-1,-2.3) node {$y_i$}  (0,-2.3) node {$x_{r-1}^{-1}$} (1,-2.3) node  {$y_s^{-1}$};
        \draw (-1.5,-2.6) node {$\vdots$} (0,-2.6) node {$\vdots$};
        \draw (-2,-3.1) node {$x_j$} (-1,-3.1) node {$y_i^{-1}$}  (0,-3.1) node {$x_{j-1}^{-1}$} (1,-3.1) node  {$y_k$};
        \draw (-2,-3.6) node {$x_{j+1}$} (-1,-3.6) node {$y_t$}  (0,-3.6) node {$x_{j}^{-1}$} (1,-3.6) node  {$y_u$};
        \draw (-1.5,-3.9) node {$\vdots$} (0,-3.9) node {$\vdots$};
        \draw (-2,-4.4) node {$x_n$} (-1,-4.4) node {-}  (0,-4.4) node {$x_{n-1}^{-1}$} (1,-4.4) node  {-};
    \end{tikzpicture}
    \end{center}
    \caption{Normal matrix} 
    \label{general_fat_graph_matrix}
    \end{figure}
    \begin{figure}[htbp]
    \begin{center}
    \begin{tikzpicture}[xscale=1,yscale=1]
        \draw (-8,0) node {$y_i$};
        \draw[dashed,red] (-8,0) circle (.3cm);
        \draw [-{Stealth[color=black]}] (-7.6,0)--(-6,0);
        \draw (-6.8,.2) node {$\sigma_1$};
        \draw (-5.6,0) node {$y_i^{-1}$};
        \draw [-{Stealth[color=black]}] (-5.2,0)--(-3.6,0);
        \draw (-4.4,.3) node {$\sigma_0^{-1}$};
        \draw (-3.2,0) node {$x_j$};
        \draw[dashed,red] (-3.2,0) circle(.3cm);
        \draw [-{Stealth[color=black]}] (-2.8,0)--(-1.2,0);
        \draw (-2,.2) node {$\sigma_1$};
        \draw (-.8,0) node {$x_j^{-1}$};
        \draw [-{Stealth[color=black]}] (-.4,0)--(1.2,0);
        \draw (.4,.3) node {$\sigma_0^{-1}$};
        \draw (1.6,0) node {$y_t$} (2.3,0) node { $\dots$};
        \draw[dashed,red] (1.6,0) circle(.3cm);
        \draw [-{Stealth[color=black]}] (-8,-.3)--(-8,-1.3);
        \draw (-8,-1.6) node {2nd column};
        \draw [-{Stealth[color=black]}] (-5.6,-.3)--(-5.6,-1.3);
        \draw (-5.6,-1.6) node {2nd column};
        \draw [-{Stealth[color=black]}] (-3.2,-.3)--(-3.2,-1.3);
        \draw (-3.2,-1.6) node {1st column};
        \draw [-{Stealth[color=black]}] (-.8,-.3)--(-.8,-1.3);
        \draw (-.8,-1.6) node {3rd column};
        \draw [-{Stealth[color=black]}] (1.6,-.3)--(1.6,-1.3);
        \draw (1.6,-1.6) node {2nd column};
    \end{tikzpicture}
    \end{center}
    \caption{Encircled elements lie on same cycle of $\sigma_0^{-1}\sigma_1$.} 
    \label{multiplication_rule}
    \end{figure}
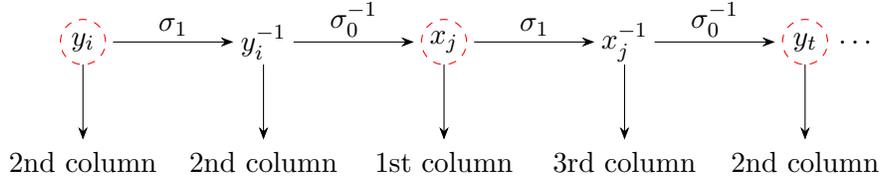
\end{proof}

\section{Existence of minimal separating filling pairs}
The goal of this section is to prove Theorem~\ref{theorem:2} which gives a necessary and sufficient condition for the existence of a minimally intersecting separating filling pair on $S_g$. We prove Proposition~\ref{prop:4.1} below which has a consequence giving the proof of the only if part of Theorem~\ref{theorem:2}.
\begin{proposition}\label{prop:4.1}
    Let $(\alpha, \beta)$ be a separating filling pair on $S_g$ and  $S_g\setminus \alpha=S_{g_1,1}\sqcup S_{g_2,1}$, where $g_1$ and $g_2$ are the genera of $S_{g_1,1}$ and $S_{g_2,1}$ respectively. Then the number of connected components in $S_{g_1,1}\setminus \beta$ and $S_{g_2,1} \setminus \beta$ are equal if and only if $g_1=g_2$.
\end{proposition}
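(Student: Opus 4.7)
My plan is to compute the number of components of $S_{g_j,1}\setminus\beta$ for $j\in\{1,2\}$ via an Euler characteristic argument on the CW-decomposition of $S_{g_j,1}$ induced by $\alpha$ and $\beta$, exploiting the filling property that every complementary region of $\alpha\cup\beta$ in $S_g$ is a topological disk.

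First I would set $n=i(\alpha,\beta)$. Since $\alpha$ is separating and $\beta$ meets $\alpha$ transversely in minimal position, the sub-arcs of $\beta$ cut out by the intersection points lie alternately on the two sides of $\alpha$; being a closed curve, $\beta$ must return to its starting side, so $n$ is even. Write $n=2k$. Consequently $\beta\cap S_{g_j,1}$ is a disjoint union of exactly $k$ properly embedded arcs, for each $j\in\{1,2\}$.

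Let $b_j$ denote the number of connected components of $S_{g_j,1}\setminus\beta$. I would next verify that these components are in bijection with the complementary disks of $(\alpha,\beta)$ lying in $S_{g_j,1}$. Each disk of $S_g\setminus(\alpha\cup\beta)$ is contained entirely in exactly one of $S_{g_1,1}$, $S_{g_2,1}$; conversely, two distinct disks on the same side cannot be joined through $\alpha\setminus\beta$ inside $S_{g_j,1}$, because each arc of $\alpha\setminus\beta$ sits on the boundary of $S_{g_j,1}$ and is therefore adjacent, inside $S_{g_j,1}$, to a unique disk. Thus each component of $S_{g_j,1}\setminus\beta$ is the union of one disk with some arcs of $\alpha\setminus\beta$ attached along its boundary, giving a one-to-one correspondence.

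Finally, I would equip $S_{g_j,1}$ with the CW structure whose vertices are the $2k$ points of $\alpha\cap\beta$, whose edges are the $2k$ arcs of $\alpha\setminus\beta$ together with the $k$ arcs of $\beta\cap S_{g_j,1}$, and whose $2$-cells are the $b_j$ disks from the previous step. Counting Euler characteristic two ways,
\[
1-2g_j \;=\; \chi(S_{g_j,1}) \;=\; 2k - 3k + b_j \;=\; b_j - k,
\]
so $b_j = 1+k-2g_j$. Subtracting gives $b_1-b_2 = 2(g_2-g_1)$, whence $b_1=b_2$ if and only if $g_1=g_2$. The only step that is not completely routine is the bijection in the third paragraph; it hinges on the filling hypothesis together with the fact that $\alpha\setminus\beta$ has one-sided neighborhoods inside each $S_{g_j,1}$, and everything else is direct bookkeeping.
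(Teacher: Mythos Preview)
Your proof is correct and takes essentially the same approach as the paper: an Euler characteristic count on the CW-decomposition of each $S_{g_j,1}$ induced by $\alpha\cup\beta$, with $2k$ vertices, $3k$ edges, and $b_j$ two-cells, yielding $b_j=1+k-2g_j$. The paper's argument is identical up to notation (it writes $i(\alpha,\beta)=2v$ and asserts the cell decomposition directly), and your extra paragraph justifying the bijection between components of $S_{g_j,1}\setminus\beta$ and complementary disks simply makes explicit what the paper takes for granted.
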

\begin{proof}
    ($\Rightarrow$) Assume that $S_{g_1,1}\setminus\beta$ and $S_{g_2,1}\setminus\beta$ both have the same number of connected components, say $n$. The curve $\alpha$ being separating implies that the geometric intersection number $i(\alpha,\beta)$ is an even integer, say $2v$.  Now, the union $(\alpha\cup\beta)$ gives a cell decomposition of $S_{g_1,1}$ and $S_{g_2,1}$. In each cell decomposition, the number of $0$-cells is $2v$, the number of $1$-cells is $3v$ and the number of 2-cells is $n$. Euler's characteristic formula implies that
    \begin{align*}
    2v-3v+n&=2-2g_1-1\hspace{.5cm} \\
    \implies g_1&=(1-n+v)/2.
    \end{align*}
    A similar calculation gives $g_2=(1-n+v)/2.$
    Thus, we conclude $g_1=g_2$.
    
    ($\Leftarrow$) Suppose $g_1=g_2$. Applying Euler's characteristic formula, we have $g_1=(1-n_1+v)/2$ and $g_2=(1-n_2+v)/2$, where $n_i$'s are the number of components in $S_{g_i,1}\setminus \beta$, $i=1,2$ and $v=i(\alpha,\beta)/2$. Now, $g_1=g_2$ implies that $n_1=n_2$. This completes the proof.
\end{proof}
Next, we prove Lemma~\ref{lemma:4.2} which shows the non-existence of minimally intersecting separating filling pair on $S_2$ and as a consequence, we have $g\geq4$ is essential in Theorem~\ref{theorem:2}.  This also follows from the work of Jeffreys \cite{jeffreys2022meanders}, where he has shown that a filling pair on $S_2$ with a single separating curve requires at least $6$ intersections.

\begin{lemma}\label{lemma:4.2}
    There exists no minimally intersecting separating filling pair on $S_2$.
\end{lemma}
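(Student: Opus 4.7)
The plan is to argue by contradiction. Suppose such a pair $(\alpha, \beta)$ exists on $S_2$. Then $i(\alpha, \beta) = 2g = 4$ and the complement has exactly $b = 2$ discs. Since $\alpha$ is essential and separating, $S_2 \setminus \alpha = S_{g_1, 1} \sqcup S_{g_2, 1}$ with $g_1, g_2 \ge 1$ and $g_1 + g_2 = 2$, forcing $g_1 = g_2 = 1$. Proposition~\ref{prop:4.1} then yields $n_1 = n_2$, and combined with $n_1 + n_2 = 2$ this gives $n_1 = n_2 = 1$: each copy of $S_{1,1}$ contains exactly one complementary disc.

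Next, I would form the fat graph $\Gamma(\alpha, \beta)$: four $4$-valent vertices corresponding to the intersection points, with two standard cycles $\alpha = (x_1, x_2, x_3, x_4)$ and $\beta = (y_1, y_2, y_3, y_4)$, each of length $4$. By Theorem~\ref{classif_of_std_cycle_in_fat_graph}, the separating hypothesis on $\alpha$ is equivalent to the normal matrix condition that $y_i$ and $y_i^{-1}$ lie in the same column of $M(\alpha, \beta)$ for every $i$. Fixing $\alpha = (v_1, v_2, v_3, v_4)$, the cycle $\beta$ is a Hamiltonian cycle on $\{v_1, v_2, v_3, v_4\}$; up to rotation and reflection there are exactly three such cycles. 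For each choice of $\beta$, there are two possible cyclic orders at $v_1$, and the separating condition propagates each such choice uniquely through $v_2, v_3, v_4$. This leaves only $3 \times 2 = 6$ candidate fat graphs.

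For each of these six candidates, I would compute the disjoint cycles of $\sigma_0^{-1}\sigma_1$ directly; by Lemma~\ref{lemma:2.6}, these enumerate the boundary components of $\Sigma(\Gamma)$. A case-by-case calculation shows that in every one of the six cases, $\Sigma(\Gamma)$ has either $4$ or $6$ boundary components. Capping these boundaries by discs yields a closed surface of Euler characteristic $V - E + F = 4 - 8 + F \in \{0, 2\}$, i.e.\ of genus $1$ or $0$ respectively; neither equals $2$, the required value for $S_2$. Hence no fat graph satisfying the hypotheses realizes a filling of $S_2$, contradicting our assumption and proving the lemma.

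The main obstacle is the somewhat tedious bookkeeping required for the six $\sigma_0^{-1}\sigma_1$ computations. An equivalent and perhaps more transparent phrasing of the contradiction is that in each of the six cases, at least one cycle of $\sigma_0^{-1}\sigma_1$ has length $2$, so that the complement of $\alpha \cup \beta$ in the putative surface contains a bigon; by the bigon criterion, this violates the assumption that $\alpha$ and $\beta$ are in minimal position.
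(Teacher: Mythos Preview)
Your approach is essentially the same as the paper's: set up the fat graph with standard cycles $\alpha=(x_1,\dots,x_4)$, $\beta=(y_1,\dots,y_4)$, invoke Theorem~\ref{classif_of_std_cycle_in_fat_graph} to translate the separating hypothesis into the column condition on the normal matrix, and derive a contradiction from the presence of a length-$2$ boundary cycle (a bigon), violating minimal position. The paper is more economical in two respects. First, it dispenses with the Proposition~\ref{prop:4.1} detour (the values $n_1=n_2=1$ play no role in the contradiction). Second, rather than enumerating six candidate fat graphs, the paper asserts directly that there is a \emph{unique} normal matrix up to relabelling satisfying the column condition, and exhibits it together with its bigon; your six cases collapse to this single one under relabelling of the $y_i$'s and choice of starting vertex/orientation. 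Your boundary-component count (yielding genus $0$ or $1$) is a valid alternative contradiction, but the bigon phrasing you give at the end is exactly the paper's argument and is the cleaner route.
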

\begin{proof}
    The proof is by contradiction. Assume that there is a minimally intersecting separating filling pair $(\alpha,\beta)$ on $S_2$. Euler's characteristic formula implies that the geometric intersection number $i(\alpha,\beta)$ between the curves $\alpha$ and $\beta$ is $4$.  The curves $\alpha$ and $\beta$ correspond to two standard cycles, each of length $4$, in the decorated fat graph $\Gamma(\alpha,\beta)$. We denote them by $\alpha=(x_1,x_2,x_3,x_4)$ and $\beta=(y_1,y_2,y_3,y_4)$. Now, Theorem~\ref{classif_of_std_cycle_in_fat_graph} implies that the edges $y_i$ and $y_i^{-1}$ are in the same column of the normal matrix $M(\alpha,\beta)$. There is a unique such normal matrix up to relabelling (see Figure~\ref{possible_normal_matrix}) and in the corresponding fat graph $\Gamma(\alpha,\beta)$, there is a boundary component of length two which implies that the curves $\alpha$ and $\beta$ form a bigon on the surface $S_2$. This contradicts that the curves $\alpha$ and $\beta$ are in minimal position.
    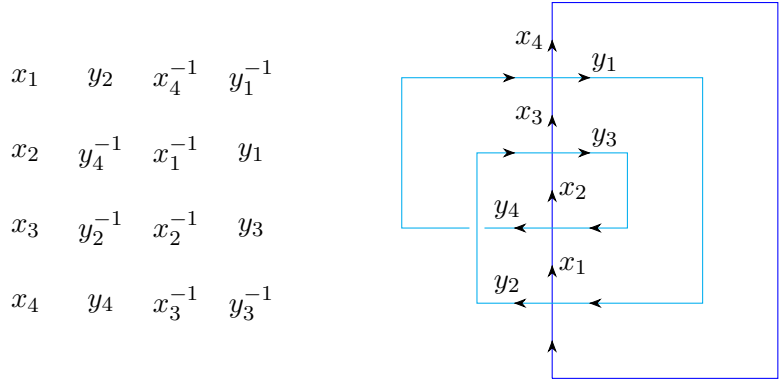
\begin{figure}[htbp]
    \begin{center}
    \begin{tikzpicture}[xscale=1,yscale=1]
        \draw (2,2) node {$y_1^{-1}$} (-1,2) node {$x_1$}  (0,2) node {$y_2$} (1,2) node  {$x_4^{-1}$};
        \draw (2,1) node {$y_1$} (-1,1) node {$x_2$}  (0,1) node {$y_4^{-1}$} (1,1) node  {$x_1^{-1}$};
        \draw (2,0) node {$y_3$} (-1,0) node {$x_3$}  (0,0) node {$y_2^{-1}$} (1,0) node  {$x_2^{-1}$};
        \draw (2,-1) node {$y_3^{-1}$} (-1,-1) node {$x_4$}  (0,-1) node {$y_4$} (1,-1) node  {$x_3^{-1}$};
        
        \draw[blue] (6,-2)--(9,-2)--(9,3)--(6,3)--(6,-2);
        \draw[cyan] (6,-1)--(5,-1)--(5,1)--(7,1)--(7,0)--(5.1,0);
        \draw[cyan] (4.9,0)--(4,0)--(4,2)--(8,2)--(8,-1)--(6,-1);
        \draw [-{Stealth[color=black]}] (6,-.5)--(6,-.49); 
        \draw [-{Stealth[color=black]}] (6,-1.5)--(6,-1.49); 
        \draw [-{Stealth[color=black]}] (6,.5)--(6,.51); 
        \draw [-{Stealth[color=black]}] (6,1.5)--(6,1.51); 
        \draw [-{Stealth[color=black]}] (6,2.5)--(6,2.51); 
        \draw [-{Stealth[color=black]}] (5.5,-1)--(5.49,-1); 
        \draw [-{Stealth[color=black]}] (5.5,0)--(5.49,0);
        \draw [-{Stealth[color=black]}] (5.5,1)--(5.51,1);
        \draw [-{Stealth[color=black]}] (5.5,2)--(5.51,2);
        \draw [-{Stealth[color=black]}] (6.5,-1)--(6.49,-1); 
        \draw [-{Stealth[color=black]}] (6.5,0)--(6.49,0);
        \draw [-{Stealth[color=black]}] (6.5,1)--(6.51,1);
        \draw [-{Stealth[color=black]}] (6.5,2)--(6.51,2);
        \draw (6.28,-.5) node{$x_1$};
        \draw (6.28,.5) node{$x_2$};
        \draw (5.7,1.5) node{$x_3$};
        \draw (5.7,2.5) node{$x_4$};
        \draw (5.4,-.75) node{$y_2$};
        \draw (5.4,{1-.75}) node{$y_4$};
        \draw (6.7,{3-.8}) node{$y_1$};
        \draw (6.7,{2-.8}) node{$y_3$};
    \end{tikzpicture}
    \end{center}
    \caption{Normal matrix $M(\alpha,\beta)$ and the corresponding fat graph.} 
    \label{possible_normal_matrix}
    \end{figure}
\end{proof}
\begin{proof}[Proof of Theorem~\ref{theorem:2}]\label{proof_th_2}
    ($\Rightarrow$) Suppose $(\alpha,\beta)$ is a minimally intersecting separating filling pair on $S_g$. Consider $S_{g_1,1},S_{g_2,1},g_1$ and $g_2$, as in Proposition~\ref{prop:4.1}. Then the number of components in each of $S_{g_1,1}\setminus \beta$ and $S_{g_2,1} \setminus \beta$ is one. Therefore, by Proposition~\ref{prop:4.1}, we have $g_1=g_2$ and this implies $g=g_1+g_2$, an even integer. Finally, by Lemma~\ref{lemma:4.2}, we have $g\geq 4$.
    
    ($\Leftarrow$) Suppose $g\geq 4$ is an even integer. We show that there is a minimal separating filling pair on $S_g$. The proof is by induction on $g$. The induction process closely follows that of Aougab-Huang used in the proof of Theorem 1.1 (Section 2.2, \cite{aougab2015minimally}). Here, we are required to construct new separating filling pairs for the base case $(g = 4)$ and for the inductive step (a new genus two piece).
    
    For $g=4$, the existence of a minimally intersecting separating filling pair follows from Figure~\ref{min_fill_pair_on_S_4}. We note that the fat graph of the curves $\alpha,\beta$, in Figure~\ref{min_fill_pair_on_S_4} can be used to check that $(\alpha,\beta)$ is indeed a separating filling pair.
    
    Let $(\alpha_g,\beta_g)$ be a minimal separating filling pair on $S_g$ (obtained by induction) and we prove the result for $g+2$.
    
    For the inductive step, let us consider the simple closed curves $\alpha'$ and $\beta'$ on $S_2$ as in Figure~\ref{a_sep_fill_on_S_2}. Then $S_2\setminus(\alpha'\cup\beta')$ is a disjoint union of 4 disks as shown in Figure~\ref{the_disjoint_union_of_4_disks}.
    
    \begin{figure}[htbp]
    \begin{center}
    \begin{tikzpicture}[xscale=.8,yscale=.6]
        \draw [] (0,0) ellipse (8cm and 4cm);       
        \draw [dashed, thick, red] (0,-4) arc
            [
                start angle=270,
                end angle=450,
                x radius=.8cm,
                y radius =4cm
            ] ;
        \draw [thick, red,->-=.45] (0,4) arc
            [
                start angle=90,
                end angle=270,
                x radius=.8cm,
                y radius =4cm
            ] ;
        \draw (-2.8,.05) .. controls (-3.5,-.3) .. (-4.2,.05);
        \draw (-2.9,-.05) .. controls (-3.5,.25) .. (-4.05,-.05);
        \draw (-5.8,.05) .. controls (-6.5,-.3) .. (-7.2,.05);
        \draw (-5.95,-.05) .. controls (-6.5,.25) .. (-7.05,-.05);
        
        \draw (2.8,.05) .. controls (3.5,-.3) .. (4.2,.05);
        \draw (2.9,-.05) .. controls (3.5,.25) .. (4.05,-.05);
        \draw (5.8,.05) .. controls (6.5,-.3) .. (7.2,.05);
        \draw (5.95,-.05) .. controls (6.5,.25) .. (7.05,-.05);

        \draw[thick, cyan,->-=.5] (-1,3.9) .. controls (-2,4.1) and (-8,2) .. (-7.5,-.4);
        \draw[thick, cyan] (-7.5,-.4) .. controls (-7.4,-1) and (-5,-1.3) .. (0,0);
        \draw[thick,cyan] (0,0) .. controls (3.5,.9) and (7.5,2) .. (7.5,-.1);
        \draw[thick,cyan] (7.5,-.1) .. controls (7.5,-.4) and (5,-3.5) .. (3.3,-3.62);
        \draw[dashed,thick,cyan] (3.3,-3.62) .. controls (0,-4) and (-5,-1) .. (-5,0);
         \draw [dashed, thick, cyan] (0,2) arc
            [
                start angle=90,
                end angle=180,
                x radius=5cm,
                y radius =2cm
            ] ;
         \draw [dashed, thick, cyan] (2.9,0) arc
            [
                start angle=0,
                end angle=90,
                x radius=2.9cm,
                y radius =2cm
            ] ;
        
        \draw[thick, cyan] (-1,-3.945) .. controls (.5,-4) and (3,-1) .. (2.9,0);
         \draw [dashed, thick, cyan] (-5.8,0) arc
            [
                start angle=180,
                end angle=260,
                x radius=5.8cm,
                y radius =4cm
            ] ;
        \draw[thick, cyan] (-5.8,0) .. controls (-5.8,.3)and (-3.5,.4) .. (-3.4,.2);
        \draw[dashed, thick, cyan] (-3.4,.2) .. controls (0,0.4) and (5,-3) .. (6.5,2.2);
        \draw[thick, cyan] (6.5,2.2) .. controls (5,4) and (-3.2,3) .. (-2.9,.05);
        
        \draw [dashed, thick, cyan] (-2.9,0) arc
            [
                start angle=180,
                end angle=270,
                x radius=2.9cm,
                y radius =2.6cm
            ] ;  
        \draw [dashed, thick, cyan] (0,-2.6) arc
            [
                start angle=270,
                end angle=360,
                x radius=5.9cm,
                y radius =2.6cm
            ] ;
        \draw[thick, cyan](5.9,0) .. controls (5.8,.3) and (4.5,.51) .. (3.5,.2);
        \draw[dashed, thick, cyan] (3.5,.2) .. controls (3,.3) and (2,3) .. (-1,3.9);
        
        \draw[red] (-1.05,1.1) node {$\alpha$};
        \draw[cyan] (3,2.7) node {$\beta$};
        
        \draw (-4,2.7) node{\tiny$\beta_1$} (5,1.4) node{\tiny$\beta_2$} (-3.7,-1.8) node{\tiny$\beta_3$} (2.7,-1.5) node{\tiny$\beta_4$} (-5.5,-2) node{\tiny$\beta_5$} (4.3,-.9) node{\tiny$\beta_6$} (-3.,1) node{\tiny$\beta_7$} (2.7,-2.65) node{\tiny$\beta_8$};
        \draw (-.86,3) node{\tiny$\alpha_1$} (-.45,.3) node{\tiny$\alpha_2$} (-1.1,-1.5) node{\tiny$\alpha_3$} (.6,-3.75) node{\tiny$\alpha_4$} (.2,-2.9) node{\tiny$\alpha_5$} (1,-2) node{\tiny$\alpha_6$} (1.1,.9) node{\tiny$\alpha_7$} (1.,2.3) node{\tiny$\alpha_8$};
    \end{tikzpicture}
    \end{center}
    \caption{$(\alpha,\beta)$ is a minimal separating filling pair on $S_4$.} 
    \label{min_fill_pair_on_S_4}
    \end{figure}
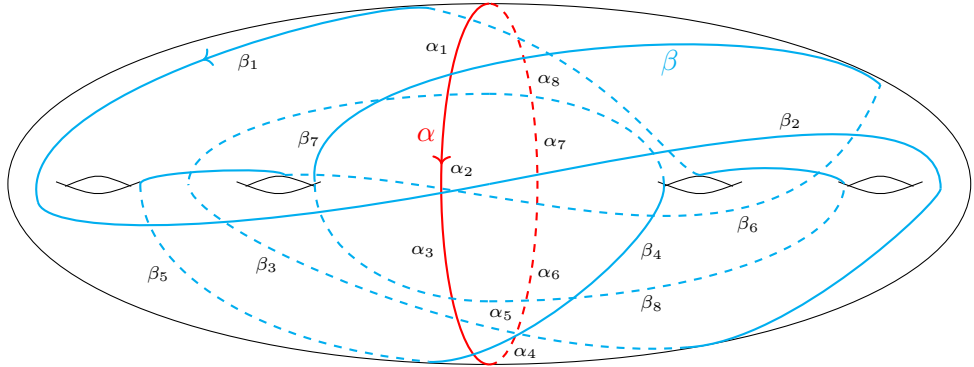
    
    \begin{figure}[htbp]
    \begin{center}
    \begin{tikzpicture}[xscale=2,yscale=1.7]
        \draw  (0,0) ellipse (3cm and 1.2cm);
        
        \draw (-2,.05) to [bend right] (-1.2,.05);
        \draw (-2,.05) to [bend left] (-1.2,.05);
        
        \draw (1.2,.05) to [bend right] (2,.05);
        \draw (1.2,.05) to [bend left] (2,.05);
        
        \draw [red,-<-=.5] (0,1.2) arc
            [
                start angle=90,
                end angle=270,
                x radius=.3cm,
                y radius =1.2cm
            ] ;
        \draw [dashed, red] (0,-1.2) arc
            [
                start angle=270,
                end angle=450,
                x radius=.3cm,
                y radius =1.2cm
            ] ;
        \draw (-.1,.9) node {\tiny $A$};
        \draw (-.15,.48) node {\tiny $B$};
        \draw (-.16,-.25) node {\tiny $C$};
        \draw (-.15,-.75) node {\tiny $D$};
        \draw (.3,-1) node {\tiny $E$};
        \draw (.3,.95) node {\tiny $F$};
        
        \draw[red] (-.4,.3) node {$\alpha'$};
        \draw[cyan,thick] (-1,.9) node {$\beta'$};
        
        \draw [-<-=.35, cyan,thick] (2.3,0) arc
            [
                start angle=0,
                end angle=150,
                x radius=2.5cm,
                y radius =.8cm
            ] ;
        \draw[cyan,thick] (-2.36,.402).. controls (-3.2,0)and(-2.2,-.95)..(-1.2,-1.09);
        
        \draw [dashed, cyan,thick] (-1.2,-1.09) arc
            [
                start angle=265,
                end angle=360,
                x radius=2.2cm,
                y radius =1.12cm
            ] ;
        \draw [cyan,thick] (1.2,0) arc
            [
                start angle=0,
                end angle=260,
                x radius=1.7cm,
                y radius =.6cm
            ] ;
        \draw [cyan,thick] (-.8,-.59) arc
            [
                start angle=240,
                end angle=300,
                x radius=2.3cm,
                y radius =.5cm
            ] ;
        \draw[cyan,thick] (1.5,-.59)..controls (3,-.5) and (3,.5)..(1.45,1.045);
            ] ;
        \draw [dashed, cyan,thick] (1.45,1.045) arc
            [
                start angle=70,
                end angle=180,
                x radius=1.98cm,
                y radius =1.12cm
            ] ;
        \draw [cyan,thick] (-1.211,0) arc
            [
                start angle=180,
                end angle=360,
                x radius=1.752cm,
                y radius =.4cm
            ] ;
        
        \draw (1.1,.8) node {\footnotesize$y_1$};
        \draw (-1,-.32) node {\footnotesize$y_2$};
        \draw (2.73,0) node {\footnotesize$y_3$};
        \draw (-2,-.41) node {\footnotesize$y_4$};
        \draw (1.3,.2) node {\footnotesize$y_5$};
        \draw (-1.6,.78) node {\footnotesize$y_6$};
        
        \draw (-.3,1.1) node {\footnotesize$x_1$};
        \draw (.45,0) node {\footnotesize$x_2$};
        \draw (-.37,-.9) node {\footnotesize$x_3$};
        \draw (-.45,-.5) node {\footnotesize$x_4$};
        \draw (-.15,0) node {\footnotesize$x_5$};
        \draw (-.38,.67) node {\footnotesize$x_6$};
    \end{tikzpicture}
    \end{center}
    \caption{Filling pair $(\alpha',\beta')$ on $S_2$.} 
    \label{a_sep_fill_on_S_2}
    \end{figure}
     
    \begin{figure}[htbp]
    \begin{center}
    \begin{tikzpicture}[xscale=.75,yscale=.75]
        \draw[red,->-=.5] (2,0) .. controls (1.5705,0.656) .. (1.4142,1.4142);
        \draw[thick,cyan,->-=.5] (1.4142,1.4142).. controls (0.6778,1.5705) .. (0,2);
        \draw[red,->-=.5] (0,2).. controls (-0.6778,1.5705) .. (-1.4142,1.4142);
        \draw[thick,cyan,-<-=.5] (-1.4142,1.4142).. controls (-1.5705,0.656) .. (-2,0);
        \draw[red,->-=.5] (-2,0).. controls (-1.5705,-0.656) .. (-1.4142,-1.4142);
        \draw[thick,cyan,-<-=.5] (-1.4142,-1.4142).. controls (-0.6778,-1.5705) .. (0,-2);
        \draw[red,->-=.5] (0,-2).. controls (0.6778,-1.5705) .. (1.4142,-1.4142);
        \draw[thick,cyan,->-=.5] (1.4142,-1.4142).. controls (1.5705,-0.656) .. (2,0);
        
        \draw (2.05,0.656) node {\tiny $x_2$} (0.7,1.9) node {\tiny $y_6$} (-0.7,1.9) node {\tiny $x_1$} (-2,0.6) node {\tiny $y_2$} (-1.95,-0.75) node {\tiny $x_5$} (-0.9,-1.85) node {\tiny $y_4$} (0.9,-1.85) node {\tiny $x_4$} (1.95,-0.75) node {\tiny $y_2$};
        
        \draw[thick, cyan,-<-=.5] (9,0) .. controls (8.5705,0.656) .. (8.4142,1.4142);
        \draw[red,-<-=.5] ({7+1.4142},1.4142).. controls ({7+0.6778},1.5705) .. ({7+0},2);
        \draw[thick, cyan,-<-=.5] ({7+0},2).. controls ({7-0.6778},1.5705) .. ({7-1.4142},1.4142);
        \draw[red,-<-=.5] ({7-1.4142},1.4142).. controls ({7-1.5705},0.656) .. ({7-2},0);
        \draw[thick, cyan,->-=.5] ({7-2},0).. controls ({7-1.5705},-0.656) .. ({7-1.4142},-1.4142);
        \draw[red,-<-=.5] ({7-1.4142},-1.4142).. controls ({7-0.6778},-1.5705) .. ({7+0},-2);
        \draw[thick, cyan,->-=.5] ({7+0},-2).. controls ({7+0.6778},-1.5705) .. ({7+1.4142},-1.4142);
        \draw[red,-<-=.5] ({7+1.4142},-1.4142).. controls ({7+1.5705},-0.656) .. ({7+2},0);
        
        \draw ({7+2.05},0.656) node {\tiny $y_5$} ({7+0.7},1.9) node {\tiny $x_5$} ({7-0.7},1.95) node {\tiny $y_1$} ({7-2},0.6) node {\tiny $x_6$} ({7-1.95},-0.75) node {\tiny $y_5$} ({7-0.9},-1.85) node {\tiny $x_2$} ({7+0.9},-1.85) node {\tiny $y_3$} ({7+1.95},-0.75) node {\tiny $x_3$};
        
        \draw[-<-=.5,cyan,thick] (2,-5) .. controls (0.8,-4.2) .. (0,-3);
        \draw[->-=.5,red] (0,-3).. controls (-.8,-4.2) .. (-2,-5);
        \draw[->-=.5,cyan,thick](-2,-5).. controls (-.8,-5.8) .. (0,-7);
        \draw[->-=.5,red](0,-7) .. controls (0.8,-5.8) .. (2,-5);
        
        \draw (1.3,-4) node {\tiny $y_6$} (-1.2,-4) node {\tiny $x_3$} (-1,-6.2) node {\tiny $y_4$} (1.2,-6.2) node {\tiny $x_6$};
        
        \draw[-<-=.5,red] (9,-5) .. controls (7.8,-4.2) .. (7,-3);
        \draw[-<-=.5,cyan,thick] (7,-3).. controls (6.2,-4.2) .. (5,-5);
        \draw[-<-=.5,red](5,-5).. controls (6.2,-5.8) .. (7,-7);
        \draw[->-=.5,cyan,thick](7,-7) .. controls (7.8,-5.8) .. (9,-5);
        
        \draw (8.3,-4) node {\tiny $x_4$} (5.8,-4) node {\tiny $y_3$} (6,-6.2) node {\tiny $x_1$} (8.2,-6.2) node {\tiny $y_1$};
        
        \draw (0,2.2) node {\tiny A} (1.57,1.57) node{\tiny E} (2.2,0) node{\tiny F}(1.57,-1.57) node{\tiny C} (-.1,-2.2) node{\tiny B} (-1.57,-1.57) node{\tiny D} (-2.2,0) node{\tiny C} (-1.57,1.57) node{\tiny F};
        \draw (7,2.2) node {\tiny C} (8.57,1.57) node{\tiny B} (9.2,0) node{\tiny E}(8.57,-1.57) node{\tiny D} (6.9,-2.2) node{\tiny F} (5.43,-1.57) node{\tiny E} (4.8,0) node{\tiny B} (5.43,1.57) node{\tiny A};
        \draw (.1,-2.8) node {\tiny E} (2.2,-5) node{\tiny A} (0,-7.2) node{\tiny B} (-2.2,-5) node {\tiny D};
        \draw (7.1,-2.8) node {\tiny D} (9.2,-5) node{\tiny C} (7,-7.2) node{\tiny A} (4.8,-5) node {\tiny F};
    \end{tikzpicture}
    \end{center}
    \caption{Topological disks after cutting $S_2$ along $\alpha'\cup\beta'$.} 
    \label{the_disjoint_union_of_4_disks}
    \end{figure}
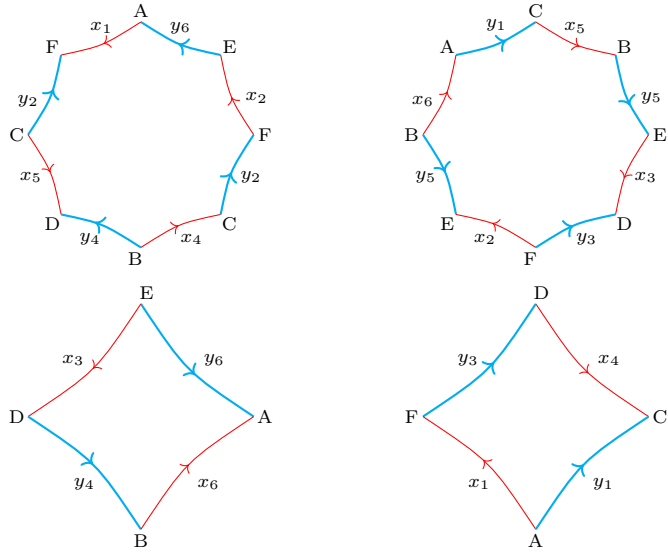
    The point $A$ lies on all of the four polygons. Let us extract an open ball around $A$ which contains no other intersection points. We extract another open ball around an intersection point of $\alpha_g$ and $\beta_g$ in a similar way. We attach these two surfaces along the boundary in such a way that the end points of the arc $\alpha_g$ are identified with the end points of the arc of $\alpha'$ and the same holds for the $\beta$ arcs. Then we get a filling pair $(\alpha_{g+2},\beta_{g+2})$ on $S_{g+2}$ with $\alpha_{g+2}$ separating and with two disks in the complement. Hence the proof.
\end{proof}
\subsection{Existence of minimal separating filling pairs for odd genera}
It follows from Proposition~\ref{prop:4.1} that there exists no separating filling pair on $S_g$ with two complementary disks, if $g=2n+1, n\in \N$. Furthermore, for any separating filling pair $(\alpha,\beta)$, the number of connected components in $S_g\setminus \alpha\cup\beta$ is even and at least four or equivalently $i(\alpha,\beta)\geq (2g+2)$. In this subsection, we prove the Theorem \ref{thm:new}.

\begin{proof}[Proof of Theorem~\ref{thm:new}]
    The proof is by induction on the genus $g$. For $g=3$, we construct a separating filling pair $(a_3,\beta_3)$ on $S_3$ satisfying the theorem. Consider the fat graph $\Gamma=(E,\sim,\sigma_1,\sigma_0)$ described below (see Figure~\ref{fig:4.5}):
\begin{enumerate}
    \item $E=\{a_i,a_i^{-1},b_i,b_i^{-1}|i=1,\dots,8\}$.
    \item The equivalence classes of $\sim$ are 
    \begin{align*}
        &v_1=\{a_1,b_1,a_8^{-1},b_8^{-1}\},\ \ \ \ \    v_5=\{a_5,b_7^{-1},a_4^{-1},b_8\}\\
        &v_2=\{a_2,b_3^{-1},a_1^{-1},b_4\}, \ \ \ \ \ v_6=\{a_6,b_1^{-1},a_5^{-1},b_2\}\\
        &v_3=\{a_3,b_7,a_2^{-1},b_6^{-1}\},\ \ \ \ \ v_7=\{a_7,b_5^{-1},a_6^{-1},b_6\}\\
        &v_4=\{a_4,b_5,a_3^{-1},b_4^{-1}\},\ \ \ \ \ v_8=\{a_8,b_3,a_7^{-1},b_2^{-1}\}.
    \end{align*}
    \item $\sigma_1(a_i)=a_i^{-1},\sigma_1(b_i)=b_i^{-1}$, for all $i=1,\dots,8.$
    \item $\sigma_0=\prod\limits_{i=1}^8\sigma_{v_i}$, where 
    \begin{align*}
        &\sigma_{v_1}=(a_1,b_1,a_8^{-1},b_8^{-1}),\ \ \ \ \  \sigma_{v_5}=(a_5,b_7^{-1},a_4^{-1},b_8)\\
        &\sigma_{v_2}=(a_2,b_3^{-1},a_1^{-1},b_4),\ \ \ \ \ \sigma_{v_6}=(a_6,b_1^{-1},a_5^{-1},b_2)\\
        &\sigma_{v_3}=(a_3,b_7,a_2^{-1},b_6^{-1}),\ \ \ \ \ \sigma_{v_7}=(a_7,b_5^{-1},a_6^{-1},b_6)\\
        &\sigma_{v_4}=(a_4,b_5,a_3^{-1},b_4^{-1}),\ \ \ \ \ 
        \sigma_{v_8}=(a_8,b_3,a_7^{-1},b_2^{-1}).
    \end{align*}
\end{enumerate}
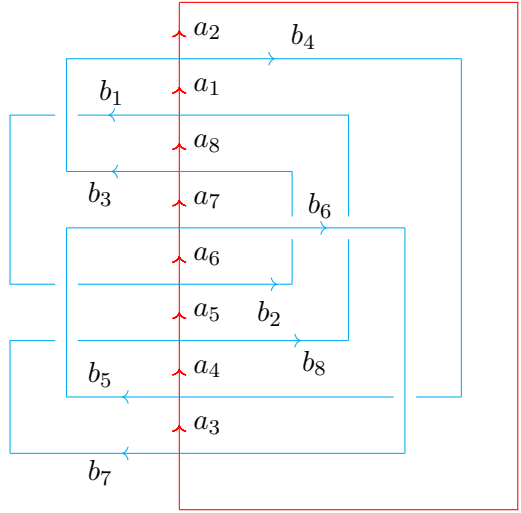
\begin{figure}[htbp]
    \centering
    \begin{tikzpicture}[xscale=1.5,yscale=1.5]
        \draw[red,->-={3/18},->-={5/18},->-={7/18},->-={9/18},->-={11/18},->-={13/18},->-={15/18},->-={17/18},] (0,0) to (0,4.5);
        \draw[red] (0,4.5)--(3,4.5)--(3,0)--(0,0);
        \draw[cyan] (0,.5)--(2,.5);
        \draw[cyan] (2,.5)--(2,2.5);
        \draw[cyan,-<-=.25] (2,2.5)--(-1,2.5);
        \draw[cyan] (-1,2.5)--(-1,1);
        \draw[cyan,-<-=.19](-1,1)--(1.9,1);
        \draw[cyan] (2.1,1)--(2.5,1);
        \draw[cyan] (2.5,1)--(2.5,4);
        \draw[cyan,-<-=.38] (2.5,4)--(-1,4)--(-1,3);
        \draw[cyan,-<-=.19] (-1,3)--(1,3)--(1,2.6);
        \draw[cyan] (1,2.4)--(1,2);
        \draw[cyan,-<-=.1] (1,2)--(-.9,2);
        \draw[cyan] (-1.1,2)--(-1.5,2)--(-1.5,3.5)--(-1.1,3.5);
        \draw[cyan,-<-=.1] (-.9,3.5)--(1.5,3.5)--(1.5,2.6);
        \draw[cyan] (1.5,2.4)--(1.5,1.5);
        \draw[cyan,-<-=.2] (1.5,1.5)--(-.9,1.5);
        \draw[cyan] (-1.1,1.5)--(-1.5,1.5)--(-1.5,.5);
        \draw[cyan,-<-=.7] (-1.5,.5)--(0,.5);

        \draw (.25,.75) node {$a_3$} (.25,1.25) node {$a_4$} (.25,1.75) node {$a_5$} (.25,2.25) node {$a_6$} (.25,2.75) node {$a_7$} (.25,3.25) node {$a_8$} (.25,3.75) node {$a_1$} (.25,4.25) node {$a_2$};
        \draw (-.6,3.7) node{$b_1$} (.8,1.74) node {$b_2$} (-.7,2.8) node{$b_3$} (1.1,4.2) node {$b_4$} (-.7,1.2) node {$b_5$} (1.25,2.7) node {$b_6$} (-.7,.33) node {$b_7$} (1.2,1.3) node {$b_8$};
    \end{tikzpicture}
    \caption{Fat graph for minimal separating filling pair $(\alpha_3,\beta_3)$ on $S_3$.}
    \label{fig:4.5}
\end{figure}
    It follows from Lemma~\ref{lemma:2.6} that the surface $\Sigma(\Gamma)$ has four boundary components as $\sigma_0^{-1}\sigma_1$ is a product of four disjoint cycles. An Euler characteristic argument implies that $\Sigma(\Gamma)$ has genus $3$. Therefore, after capping the boundary components by topological disks, we obtain a closed surface $S_3$ of genus $3$ on which the standard cycles $(a_1,\dots,a_8)$ and $(b_1,\dots,b_8)$ project to $2$ simple closed curves $\alpha_3$ and $\beta_3$. Applying Theorem \ref{classif_of_std_cycle_in_fat_graph}, we have $\alpha_3$ is separating and hence $(\alpha_3,\beta_3)$ is a required separating filling pair on $S_3$.
    
    Now, suppose $g\geq3$ odd and $(\alpha_g,\beta_g)$ is a separating filling pair on $S_g$ satisfying the theorem. Then $S_g\setminus(\alpha_g\cup\beta_g)$ is a disjoint union of four polygons. Choose a point of $\alpha_g\cap\beta_g$ which is a common vertex of three polygons. Consider the separating filling pair $(\alpha',\beta')$ on $S_2$, as described in Figure~\ref{a_sep_fill_on_S_2} and choose $A\in \alpha'\cap\beta'$. Then, we excise small disks around these two vertices and take connected sum in a similar way as discussed in the previous section to obtain a minimal separating filling pair $(\alpha_{g+2},\beta_{g+2})$ on $S_{g+2}$.
\end{proof}

\begin{section}{Counting of Mapping Class Group Orbits.}
The mapping class group $\M$ acts on the set $\mathcal{C}_g$ of all minimally intersecting separating filling pairs of $S_g$ as follows: $f\cdot (\alpha, \beta)=(f(\alpha), f(\beta))$, where $f\in \M$ and $(\alpha, \beta)\in \mathcal{C}_g$. In this section, we find an upper bound of the number of orbits of this action, in particular, we prove Theorem~\ref{thm:1.3}. The idea of the proof is similar to the proof of Theorem 1.1 \cite{aougab2015minimally}  by Aougab-Huang. More precisely, we use the method and language of filling permutations originally used by Aougab-Huang but we need to investigate the algebraic conditions characterising separating filling pairs.

\begin{subsection}{Oriented filling pair and filling permutation}\label{sec:5.1} Consider $(\alpha,\beta)\in \mathcal{C}_g$. By Euler characteristic formula, we have that the geometric intersection number between $\alpha$ and $\beta$ is $2g$.
These intersection points decompose $\alpha$ into $2g$ sub-arcs. We choose an initial arc and an orientation on $\alpha$. Now, we label the sub-arcs by $\alpha_1,\dots,\alpha_{2g},$ in accordance with the orientation and with $\alpha_1$ being the initial arc. Similarly, we label the sub-arcs of $\beta$ by $\beta_1, \dots,\beta_{2g}$. A filling pair $(\alpha,\beta)$ with a chosen orientation on each of $\alpha$ and $\beta$ is called an \textit{oriented filling pair} and an oriented filling pair with a labelling, we call as a \textit{labelled filling pair}. 

Let $\Sigma_g$ denote the group of permutations on $\mathrm{N}_{g}= \{1, 2, \dots, 8g\}$. We define a permutation $\phi^{(\alpha,\beta)}\in \Sigma_g$ corresponding to a labelled filling pair $(\alpha,\beta)$ as described below. Consider the ordered set
$$ A_g=\left\{\alpha_1,\beta_1,\hdots,\alpha_{2g},\beta_{2g},\alpha_1^{-1},\beta_1^{-1},\hdots,\alpha_{2g}^{-1},\beta_{2g}^{-1}\right\}.$$
If we cut the surface $S_g$ along $(\alpha\cup\beta)$, then we get two $4g$-gons whose sides are labelled by the members of $A_g$. Now, we define,
\begin{equation*}
    \phi^{(\alpha,\beta)}(i)=j;
\end{equation*}
if the $i^{\text{th}}$ element of $A_g$ is succeeded by the $j^{\text{th}}$ element of $A_g$ along the clockwise direction of the $4g$-gons.

\begin{example}
Consider the labelled filling pair $(\alpha,\beta)$ as in Figure~\ref{min_fill_pair_on_S_4}. The labelled polygons obtained by cutting the surface along $\alpha\cup\beta$ are shown in Figure~\ref{One_of_two_poly}. Then,
\begin{align*}
\phi^{(\alpha,\beta)}&=c_1c_2, \text{ where }\\  c_1&=(4,23,24,29,12,17,32,25,20,19,28,27,16,31,8,21)\text{ and}\\
c_2&=(1,14,11,26,7,6,15,2,5,10,13,22,9,30,3,18).
\end{align*}
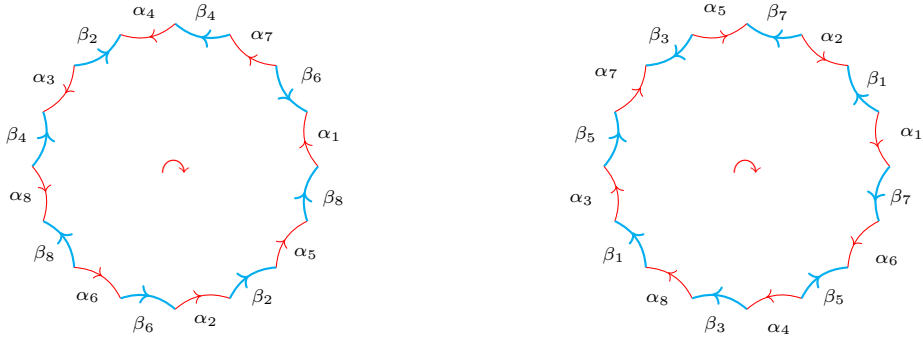
\begin{figure}[htbp]
\begin{center}
\begin{tikzpicture}[xscale=1.9,yscale=1.9]

    \draw[red,->-=.5] ({cos(0)},{sin(0)}) to [bend left] ({cos(22.5)},{sin(22.5)});
    \draw[thick, cyan, -<-=.5] ({cos(22.5)},{sin(22.5)}) to [bend left] ({cos(45)},{sin(45)});
    \draw[red, ->-=.5] ({cos(45)},{sin(45)}) to [bend left] ({cos(67.5)},{sin(67.5)});
    \draw[thick, cyan, ->-=.5] ({cos(67.5)},{sin(67.5)}) to [bend left] ({cos(90)},{sin(90)});
    \draw[red, ->-=.5] ({cos(90)},{sin(90)}) to [bend left] ({cos(112.5)},{sin(112.5)});
    \draw[thick, cyan, -<-=.5] ({cos(112.5)},{sin(112.5)}) to [bend left] ({cos(135)},{sin(135)});
    \draw[red, ->-=.5] ({cos(135)},{sin(135)}) to [bend left] ({cos(157.5)},{sin(157.5)});
    \draw[thick, cyan, -<-=.5] ({cos(157.5)},{sin(157.5)}) to [bend left] (-1,0);
    \draw[red, ->-=.5] (-1,0) to [bend left] ({cos(202.5)},{sin(202.5)});
    \draw[thick, cyan, -<-=.5] ({cos(202.5)},{sin(202.5)}) to [bend left] ({cos(225)},{sin(225)});
    \draw[red, ->-=.5] ({cos(225)},{sin(225)}) to [bend left] ({cos(247.5)},{sin(247.5)});
    \draw[thick, cyan, ->-=.5] ({cos(247.5)},{sin(247.5)}) to [bend left] ({cos(270)},{sin(270)});
    \draw[red, ->-=.5] ({cos(270)},{sin(270)}) to [bend left] ({cos(292.5)},{sin(292.5)});
    \draw[thick, cyan, ->-=.5] ({cos(292.5)},{sin(292.5)}) to [bend left] ({cos(315)},{sin(315)});
    \draw [red, ->-=.5]({cos(315)},{sin(315)}) to [bend left] ({cos(337.5)},{sin(337.5)});
    \draw [thick, cyan, ->-=.5]({cos(337.5)},{sin(337.5)}) to [bend left] ({cos(360)},{sin(360)});

    \draw[red, -<-=.5] ({4+cos(0)},{sin(0)}) to [bend left] ({4+cos(22.5)},{sin(22.5)});
    \draw[thick, cyan, ->-=.5] ({4+cos(22.5)},{sin(22.5)}) to [bend left] ({4+cos(45)},{sin(45)});
    \draw[red, -<-=.5] ({4+cos(45)},{sin(45)}) to [bend left] ({4+cos(67.5)},{sin(67.5)});
    \draw[thick, cyan, ->-=.5] ({4+cos(67.5)},{sin(67.5)}) to [bend left] ({4+cos(90)},{sin(90)});
    \draw[red, -<-=.5] ({4+cos(90)},{sin(90)}) to [bend left] ({4+cos(112.5)},{sin(112.5)});
    \draw[thick, cyan, ->-=.5] ({4+cos(112.5)},{sin(112.5)}) to [bend left] ({4+cos(135)},{sin(135)});
    \draw[red, -<-=.5] ({4+cos(135)},{sin(135)}) to [bend left] ({4+cos(157.5)},{sin(157.5)});
    \draw[thick, cyan, -<-=.5] ({4+cos(157.5)},{sin(157.5)}) to [bend left] (4-1,0);
    \draw[red, -<-=.5] (4+-1,0) to [bend left] ({4+cos(202.5)},{sin(202.5)});
    \draw[thick, cyan, -<-=.5] ({4+cos(202.5)},{sin(202.5)}) to [bend left] ({4+cos(225)},{sin(225)});
    \draw[red, -<-=.5] ({4+cos(225)},{sin(225)}) to [bend left] ({4+cos(247.5)},{sin(247.5)});
    \draw[thick, cyan, -<-=.5] ({4+cos(247.5)},{sin(247.5)}) to [bend left] ({4+cos(270)},{sin(270)});
    \draw[red, -<-=.5] ({4+cos(270)},{sin(270)}) to [bend left] ({4+cos(292.5)},{sin(292.5)});
    \draw[thick, cyan, ->-=.5] ({4+cos(292.5)},{sin(292.5)}) to [bend left] ({4+cos(315)},{sin(315)});
    \draw [red, -<-=.5]({4+cos(315)},{sin(315)}) to [bend left] ({4+cos(337.5)},{sin(337.5)});
    \draw [thick, cyan, -<-=.5]({4+cos(337.5)},{sin(337.5)}) to [bend left] ({4+cos(360)},{sin(360)});

    \draw ({1.1*cos(11.25)},{1.1*sin(11.25)}) node{\tiny$\alpha_1$} ({1.15*cos(33.75)},{1.15*sin(33.75)}) node {\tiny$\beta_6$} ({1.1*cos(56.25)},{1.1*sin(56.25)}) node{\tiny$\alpha_7$} ({1.1*cos(78.75)},{1.1*sin(78.75)}) node {\tiny$\beta_4$} ({1.1*cos(101.25)},{1.1*sin(101.25)}) node{\tiny$\alpha_4$} ({1.1*cos(123.75)},{1.1*sin(123.75)}) node {\tiny$\beta_2$}   ({1.1*cos(146.25)},{1.1*sin(146.25)}) node{\tiny$\alpha_3$} ({1.13*cos(168.75)},{1.13*sin(168.75)}) node {\tiny$\beta_4$}  ({1.1*cos(191.25)},{1.1*sin(191.25)}) node{\tiny$\alpha_8$} ({1.1*cos(213.75)},{1.1*sin(213.75)}) node {\tiny$\beta_8$}    ({1.1*cos(236.25)},{1.1*sin(236.25)}) node{\tiny$\alpha_6$} ({1.13*cos(258.75)},{1.13*sin(258.75)}) node {\tiny$\beta_6$}   ({1.1*cos(281.25)},{1.1*sin(281.25)}) node{\tiny$\alpha_2$} ({1.1*cos(303.75)},{1.1*sin(303.75)}) node {\tiny$\beta_2$}   ({1.1*cos(326.25)},{1.1*sin(326.25)}) node{\tiny$\alpha_5$} ({1.13*cos(348.75)},{1.13*sin(348.75)}) node {\tiny$\beta_8$};
    
    \draw ({4+1.18*cos(11.25)},{1.188*sin(11.25)}) node{\tiny$\alpha_1$} ({4+1.1*cos(33.75)},{1.1*sin(33.75)}) node {\tiny$\beta_1$} ({4+1.08*cos(56.25)},{1.08*sin(56.25)}) node{\tiny$\alpha_2$} ({4+1.1*cos(78.75)},{1.1*sin(78.75)}) node {\tiny$\beta_7$} ({4+1.12*cos(101.25)},{1.12*sin(101.25)}) node{\tiny$\alpha_5$} ({4+1.1*cos(123.75)},{1.1*sin(123.75)}) node {\tiny$\beta_3$}   ({4+1.18*cos(146.25)},{1.18*sin(146.25)}) node{\tiny$\alpha_7$} ({4+1.15*cos(168.75)},{1.15*sin(168.75)}) node {\tiny$\beta_5$}  ({4+1.18*cos(191.25)},{1.18*sin(191.25)}) node{\tiny$\alpha_3$} ({4+1.12*cos(213.75)},{1.12*sin(213.75)}) node {\tiny$\beta_1$}    ({4+1.12*cos(236.25)},{1.12*sin(236.25)}) node{\tiny$\alpha_8$} ({4+1.13*cos(258.75)},{1.13*sin(258.75)}) node {\tiny$\beta_3$}   ({4+1.16*cos(281.25)},{1.16*sin(281.25)}) node{\tiny$\alpha_4$} ({4+1.1*cos(303.75)},{1.1*sin(303.75)}) node {\tiny$\beta_5$}   ({4+1.18*cos(326.25)},{1.18*sin(326.25)}) node{\tiny$\alpha_6$} ({4+1.1*cos(348.75)},{1.1*sin(348.75)}) node {\tiny\tiny $\beta_7$};

    \draw[red] (0,0) node{$\curvearrowright$};
    \draw[red] (4,0) node{$\curvearrowright$};

\end{tikzpicture}
\end{center}
\caption{The labelled polygons.}
\label{One_of_two_poly}
\end{figure}

\end{example}
Now, we study the properties of the permutation $\phi^{(\alpha,\beta)}$ in the following proposition.

\begin{proposition}\label{prop_of_phi_alpha_beta}
The permutation $\phi^{(\alpha,\beta)}$ has the following properties.
\begin{enumerate}
     \item $\phi^{(\alpha,\beta)}$ is parity respecting and sends even entries to odds and vice-versa.
     \item $\phi^{(\alpha,\beta)}$ is a product of two disjoint $4g$-cycles.
  
    \item One cycle of $\phi^{(\alpha,\beta)}$ contains each element of $\{1,3,\hdots,4g-1\}$ and the other contains each element of $\{4g+1,4g+3,\hdots,8g-1\}$.
    \item If a cycle contains some even integer $2i$, then it contains $(2i+4g) (\mathrm{mod}\  8g).$
    \item If a cycle contains an even integer of the form $(4k+2)$, then all the even integers in this cycle have this form. A similar statement is true for the integers of the form $4k$.
    \item $\phi^{(\alpha,\beta)}$ satisfies the equation $\phi^{(\alpha,\beta)}Q^{4g}\phi^{(\alpha,\beta)}=\tau$,
    where 
    \begin{align*}
    Q &=(1,2,\hdots,8g) \text{ and}\\ \tau &=\tau_1\tau_2\tau_3\tau_4, \text{ where }\\ 
    \tau_1&= (1,3,\hdots,4g-1),\\
    \tau_2&=(2,4,\hdots,4g),\\
    \tau_3&=(8g-1,8g-3,\hdots,4g+1) \text{ and }\\
    \tau_4&=(8g,8g-2,\dots,4g+2).
    \end{align*}
\end{enumerate}
\end{proposition}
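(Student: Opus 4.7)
The plan is to identify $\phi^{(\alpha,\beta)}$ with the boundary permutation $\sigma_0^{-1}\sigma_1$ of the fat graph $\Gamma=\Gamma(\alpha,\beta)=(E,\sim,\sigma_1,\sigma_0)$, and then to read off each property from that identification. Under the ordering of $A_g$, the index set $\{1,\ldots,8g\}$ is in bijection with the $8g$ directed edges of $\Gamma$, and under this bijection $Q^{4g}$ (translation by $4g$) is exactly the involution $\sigma_1$ that reverses each directed edge, since position $i$ labels an oriented arc $\ell$ and position $i+4g\pmod{8g}$ labels $\ell^{-1}$. By Lemma~\ref{lemma:2.6} the cycles of $\sigma_0^{-1}\sigma_1$ are the boundary components of $\Sigma(\Gamma)$, i.e.\ the boundaries of the two $4g$-gons obtained by cutting $S_g$ along $\alpha\cup\beta$ traversed clockwise; hence $\phi^{(\alpha,\beta)}=\sigma_0^{-1}\sigma_1$.

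Parts (1) and (2) are immediate from this identification. At each $4$-valent vertex of $\Gamma$ the cyclic order alternates between $\alpha$- and $\beta$-directed edges (since $\alpha$ and $\beta$ intersect transversally), so $\phi^{(\alpha,\beta)}$ swaps odd- and even-indexed positions; and minimality gives precisely two complementary disks, each a $4g$-gon, so $\phi^{(\alpha,\beta)}$ is a product of two $4g$-cycles.

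Parts (3), (4) and (5) are topological consequences of $\alpha$ being separating. Writing $S_g\setminus\alpha=S_{g_1,1}\sqcup S_{g_2,1}$, the two $4g$-gons sit one in each component. The two sides of each $\alpha$-sub-arc $x_i$ lie in different components, so the positively oriented labels $x_1,\ldots,x_{2g}$ (positions $1,3,\ldots,4g-1$) all appear on one polygon and their inverses (positions $4g+1,\ldots,8g-1$) on the other, giving (3). Each $\beta$-sub-arc $y_k$ lies entirely in one component, so both of its sides appear on the boundary of the same polygon, giving (4). Since $\beta$ crosses $\alpha$ transversally at every intersection, consecutive sub-arcs $y_k$ and $y_{k+1}$ lie in opposite components, so the odd-indexed $y_k$ lie in one polygon and the even-indexed in the other; rewriting this in positions yields (5).

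For (6), using $\sigma_1^2=\mathrm{id}$ I will reduce the left-hand side to
\[
\phi^{(\alpha,\beta)}\,Q^{4g}\,\phi^{(\alpha,\beta)} \;=\; (\sigma_0^{-1}\sigma_1)\,\sigma_1\,(\sigma_0^{-1}\sigma_1) \;=\; \sigma_0^{-2}\sigma_1.
\]
Since $\Gamma$ is $4$-valent, $\sigma_0^{-2}=\sigma_0^{2}$ at every vertex, and the standard-cycle condition places $x_{i+1}$ opposite $x_i$ and $y_{k+1}$ opposite $y_k$ in the cyclic order at their common vertex. A short vertex-by-vertex computation then gives $\sigma_0^{-2}\sigma_1(x_i)=x_{i+1}$, $\sigma_0^{-2}\sigma_1(y_k)=y_{k+1}$, $\sigma_0^{-2}\sigma_1(x_i^{-1})=x_{i-1}^{-1}$ and $\sigma_0^{-2}\sigma_1(y_k^{-1})=y_{k-1}^{-1}$. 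Translating these back into positions recovers $\tau=\tau_1\tau_2\tau_3\tau_4$, where $\tau_3,\tau_4$ run in the reverse direction because applying $\sigma_1$ to an inverse arc-end lands on the start of the \emph{preceding} (not succeeding) arc of the standard cycle. The main technical obstacle is exactly this orientation bookkeeping in the last step; the rest of the argument is a clean translation between the fat graph and polygon pictures.
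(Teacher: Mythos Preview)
Your argument is correct, and for parts (1)--(5) it is essentially the same as the paper's: both read these properties off from the polygon/boundary picture, you simply phrase it in fat-graph language.

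For part (6) your approach genuinely differs from the paper's. The paper verifies $\phi^{(\alpha,\beta)}Q^{4g}\phi^{(\alpha,\beta)}=\tau$ by a direct case analysis: it splits into four cases according to whether $k$ lies in $\{1,\ldots,4g\}$ or $\{4g+1,\ldots,8g\}$ and whether $k$ is odd or even, and in each case chases the arc labels around a local picture at an intersection point (Figures~\ref{fig11.3}--\ref{fig11.6}). You instead use the identification $\phi^{(\alpha,\beta)}=\sigma_0^{-1}\sigma_1$ and $Q^{4g}=\sigma_1$ to collapse the whole computation to $\sigma_0^{-2}\sigma_1$, and then exploit the $4$-valence ($\sigma_0^{-2}=\sigma_0^{2}=$ ``opposite edge'') together with the standard-cycle structure to evaluate this map uniformly on all four types of directed edges. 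This is cleaner and more conceptual; the paper's picture-based argument is perhaps more transparent for a reader unfamiliar with the fat-graph formalism. One caution: the assertion that $\phi^{(\alpha,\beta)}$ equals $\sigma_0^{-1}\sigma_1$ (rather than its inverse $\sigma_1\sigma_0$) depends on matching the paper's ``clockwise on the polygon'' convention with the orientation convention implicit in Lemma~\ref{lemma:2.6}, and getting this wrong would produce $\tau^{-1}$ instead of $\tau$. You flag this orientation bookkeeping yourself, but it would be worth pinning down explicitly (e.g.\ by checking one edge in Example~\ref{example} or Figure~\ref{One_of_two_poly}) rather than leaving it as a convention to be trusted.
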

\begin{proof}
Let $P_1$ and $P_2$ be the $4g$-gons obtained by cutting the surface $S_g$ along $\alpha\cup \beta.$
\begin{enumerate}
    \item Among every two consecutive sides of the $4g$-gons $P_1$ and $P_2$, one comes from the $\alpha$-arcs and the other from the $\beta$-arcs, which implies that $\phi^{(\alpha,\beta)}$ is parity respecting and sends even entries to odds and vice-versa.
\item
Each polygon corresponds to a cycle of $\phi^{(\alpha,\beta)}$ and the converse is also true. Now, the statement follows from the minimality of the separating filling pair $(\alpha,\beta)$.
\item
By consideration, the curve $\alpha$ is separating which implies that
\begin{figure}[htbp]
\begin{center}
\begin{tikzpicture}[xscale=1,yscale=1]


\draw (0,0) ellipse (4cm and 1.2cm);

\draw [red] (0,1.2) arc
    [
        start angle=90,
        end angle=270,
        x radius=.3cm,
        y radius =1.2cm
    ] ;
    \draw [dashed, red] (0,-1.2) arc
    [
        start angle=270,
        end angle=450,
        x radius=.3cm,
        y radius =1.2cm
    ] ;
\draw (-3.5,.1) .. controls (-3,-.2) .. (-2.5,.1);
\draw (-3.35,0) .. controls (-3,.1) .. (-2.65,0);

\draw (-1.5,.1) .. controls (-1,-.2) .. (-.5,.1);
\draw (-1.35,0) .. controls (-1,.1) .. (-.65,0);


\draw (3.5,.1) .. controls (3,-.2) .. (2.5,.1);
\draw (3.35,0) .. controls (3,.1) .. (2.65,0);

\draw (1.5,.1) .. controls (1,-.2) .. (.5,.1);
\draw (1.35,0) .. controls (1,.1) .. (.65,0);


\draw [red] (-.1,-.2) node {$\alpha$};

\draw [thick, cyan] (-1.5,.5) .. controls (0,.7) .. (1.5,.5);
 
\draw [cyan] (-1,.8) node {$\beta_1$} (1,.8) node {$\beta_2$};

\end{tikzpicture}
\end{center}
\caption{Separating Curve $\alpha$ on $S_g$.} 
\label{sep_curve_alpha_on_S_g}
\end{figure}
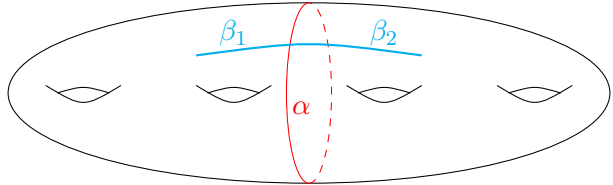
$\alpha_1,\hdots,\alpha_{2g}$ are in one polygon and $\alpha_1^{-1},\hdots,\alpha_{2g}^{-1}$ are in the other polygon.

\item The proof follows from the fact that for each $i$, the sides labelled by $\beta_i$ and $\beta_i^{-1}$ are in the same polygon (we refer to Theorem \ref{classif_of_std_cycle_in_fat_graph}). 
\item The edges labelled by $\beta_i$ and $\beta_i^{-1}$, for $i\in\{1,\dots,2g\}$ odd integers, are sides of one polygon and for even $i$'s, they are the sides of the other polygon (see Figure~\ref{sep_curve_alpha_on_S_g}). Therefore, the statement follows from Theorem \ref{classif_of_std_cycle_in_fat_graph}.
\item The proof follows from Theorem 2.1~\cite{nieland2018connected}.
\end{enumerate}
\end{proof}

If a permutation $\phi\in\Sigma_{8g}$ satisfies the properties $(1)-(6)$ of Proposition \ref{prop_of_phi_alpha_beta}, then we call it a \textit{ filling permutation}. Given a minimally intersecting separating filling pair $(\alpha,\beta)$ on $S_g$, we have  that  $\phi=\phi^{(\alpha,\beta)}$ is a filling permutation. Conversely, in Theorem \ref{thm:5.3} below, we prove that every filling permutation is realized by a minimally intersecting separating filling pair.

\begin{theorem} \label{thm:5.3}
Let $\phi\in\sum_{8g}$ be a filling permutation. There exists a minimally intersecting separating filling pair $(\alpha,\beta)$ such that $\phi=\phi^{(\alpha,\beta)}$.
\end{theorem}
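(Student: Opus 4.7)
The plan is to invert the construction of Section~\ref{sec:5.1}. Given a filling permutation $\phi$, the aim is to build two labelled $4g$-gons from the cycles of $\phi$, glue them along edges with matching labels, and verify that the resulting closed surface is $S_g$ and carries a minimal separating filling pair whose associated permutation is exactly $\phi$.

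By property $(2)$, write $\phi = c_1 c_2$ with each $c_k$ a $4g$-cycle. Take two oriented topological $4g$-gons $P_1, P_2$ with clockwise boundary orientation, and, for $c_k = (i_1, \dots, i_{4g})$, label the sides of $P_k$ in clockwise order by the corresponding entries of $A_g$. Property $(1)$ forces the labels around each polygon to alternate between $\alpha$- and $\beta$-type; properties $(3)$ and $(4)$ imply that, up to swapping $P_1, P_2$, all sides $\alpha_1, \dots, \alpha_{2g}$ lie on $P_1$, all $\alpha_1^{-1}, \dots, \alpha_{2g}^{-1}$ lie on $P_2$, and each $\beta_j$ shares its polygon with its mate $\beta_j^{-1}$. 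Now form the cell complex $X$ by orientation-reversingly identifying each $\alpha_i$-side with the corresponding $\alpha_i^{-1}$-side, and each $\beta_j$-side with the corresponding $\beta_j^{-1}$-side. Since the $P_k$ carry matching orientations, $X$ is a closed oriented surface with $F=2$ and $E=4g$.

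The number $V$ of vertices of $X$ equals the number of cycles of the corner-identification permutation $\psi = Q^{4g}\circ\phi$. From property $(6)$,
\[
\psi^{2} \;=\; Q^{4g}\phi\, Q^{4g}\phi \;=\; Q^{4g}\,(\phi\, Q^{4g}\phi) \;=\; Q^{4g}\tau.
\]
A direct check gives $Q^{4g}\tau\, Q^{4g}=\tau^{-1}$, so $(Q^{4g}\tau)^{2}=\mathrm{id}$; moreover $Q^{4g}\tau$ has no fixed points because $\tau$ preserves each half $\{1,\dots,4g\}$ and $\{4g+1,\dots,8g\}$ while $Q^{4g}$ swaps them. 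Hence $\psi^{2}$ is a fixed-point-free involution with $4g$ transpositions. Property $(1)$ makes $\psi$ parity-switching, so its cycles have even length; the absence of fixed points of $\psi^{2}$ rules out $1$- and $2$-cycles of $\psi$, and since a $2k$-cycle of $\psi$ squares to two $k$-cycles in $\psi^{2}$, having $\psi^{2}$ consist only of $2$-cycles forces every cycle of $\psi$ to be a $4$-cycle. Thus $V = 8g/4 = 2g$, and $\chi(X) = 2-2g$, so $X \cong S_g$.

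The images of the $\alpha$- and $\beta$-sides in $X$ form embedded graphs $\alpha, \beta \subset X$ whose vertices all have degree two. Reading Cases~1 and 3 of the proof of Proposition~\ref{prop_of_phi_alpha_beta}(6) in reverse, the cycles $\tau_1, \tau_3$ of $\tau$ say that at the head of every $\alpha_i$ the opposite arc is $\alpha_{i+1}$, so $\alpha_1,\dots,\alpha_{2g}$ concatenate into a single simple closed curve; the analogous argument using $\tau_2, \tau_4$ handles $\beta$. Separation of $\alpha$ is immediate from property $(3)$: cutting $X$ along $\alpha$ separates $P_1$ from $P_2$. The complement $X\setminus(\alpha\cup\beta)$ equals the disjoint union of the two open $4g$-gons, so $(\alpha,\beta)$ fills $S_g$; since $4g>2$ no complementary cell is a bigon, so $\alpha,\beta$ are in minimal position. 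Thus $(\alpha,\beta)\in\mathcal{C}_g$, and the labelling was set up so that $\phi^{(\alpha,\beta)}=\phi$ by construction.

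The main obstacle is the vertex-count step: correctly identifying the corner permutation $\psi$ as $Q^{4g}\phi$ and deducing $V=2g$ from property $(6)$. This is where all the constraints $(1)$--$(6)$ interlock---parity-switching excludes odd-length cycles of $\psi$, the involutivity of $Q^{4g}\tau$ excludes $2$-cycles, and the precise cycle type of $\tau$ excludes $2k$-cycles for $k>2$. A secondary subtlety is verifying that the arcs $\alpha_i$ close into one simple closed curve rather than several, which rests on $\tau_1, \tau_3$ each being a single $2g$-cycle.
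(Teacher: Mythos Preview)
Your proof is correct and follows the same construction as the paper: label two $4g$-gons by the cycles of $\phi$, glue paired sides, and compute the genus via Euler characteristic. The one substantive difference is in the vertex count. The paper tracks an explicit identification cycle $v_1\to v_2\to v_3\to v_4\to v_1$ using properties~(4) and~(6) directly; you instead recognise the corner permutation as $\psi=Q^{4g}\phi$, compute $\psi^{2}=Q^{4g}\tau$, show this is a fixed-point-free involution, and combine with parity-switching to force every $\psi$-cycle to have length~$4$. Your argument is a bit more systematic and makes the role of condition~(6) transparent. You also supply the checks---connectedness of $\alpha$ and $\beta$ via $\tau_1,\tau_3$ and $\tau_2,\tau_4$, absence of bigons, and separation of $\alpha$ via properties~(3)--(4)---that the paper leaves to the reader with the phrase ``it is straightforward to see.'' One small remark: your claim that separation is ``immediate from property~(3)'' really uses property~(4) as well (the $\beta$-gluings must be intra-polygon), which you did set up earlier but should cite at that point.
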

\begin{proof}
The existence of a filling pair $(\alpha,\beta)$ on $S_g$ such that $\phi=\phi^{(\alpha,\beta)}$ follows from  the converse part of Nieland’s Theorem 2.1~\cite{nieland2018connected}. Hence, to complete the proof, it only remains to show that  $\alpha$ is separating.

Consider $\alpha=(\alpha_1,\dots,\alpha_{2g})$ and $\beta=(\beta_1,\dots,\beta_{2g})$ as in the proof of Theorem 2.1~\cite{nieland2018connected}. An element $i\in\{1,\dots,8g\}$ corresponds to the $i$-th element of 
$$A_g=\left\{\alpha_1,\beta_1,\alpha_2,\beta_2,\hdots,\alpha_{2g},\beta_{2g},\alpha_1^{-1},\beta_1^{-1},\hdots,\alpha_{2g}^{-1},\beta_{2g}^{-1}\right\}.$$
By Proposition \ref{prop_of_phi_alpha_beta}, the complement $S_g\setminus(\alpha\cup \beta)$ is a disjoint union of two polygons, all sides labelled by $\alpha_i$ are in one polygon and those labelled by $\alpha_i^{-1}$ are in another. Furthermore, for every $i$, the edges labelled by  $\beta_i$ and $\beta_i^{-1}$ are in the same polygon. Now, it follows that $\beta_i$ and $\beta_i^{-1}$ are in the same column of the normal matrix $M(\alpha,\beta)$. Hence, Theorem \ref{classif_of_std_cycle_in_fat_graph} implies that $\alpha$ is separating.
\end{proof}
Next, we study a necessary and sufficient condition for two minimally intersecting separating filling pairs to be in the same mapping class group orbit. The following argument is analogous to Aougab-Huang Lemma 2.3 \cite{aougab2015minimally}.

\begin{lemma}\label{lemma11.3}
Suppose $\Gamma=(\alpha,\beta)$ and $\tilde{\Gamma}=(\tilde{\alpha},\tilde{\beta})$ are two minimally intersecting separating filling pairs on $S_g$ in the same $\M$-orbit. Then $\phi^{(\alpha,\beta)}=\phi^{(\tilde{\alpha},\tilde{\beta})}$ modulo conjugation by permutations of the form 
\begin{center}
    $\mu_g^l\kappa_g^k\delta_g^j\eta_g^i,\hspace{.5 cm} l,i\in\{0,1\};\hspace{.4 cm} j,k\in\{0,1,\hdots,2g-1\},$ where 
\end{center}
\begin{align*}
    \mu_g&=(2,4g+2)(4,4g+4)\hdots(4g,8g),\\
    \kappa_g&=(1,3,\hdots,4g-1)(4g+1,4g+3,\hdots,8g-1),\\
    \delta_g&=(2,4,\hdots,4g)(4g+2,4g+4,\hdots,8g) \text{ and}\\
    \eta_g&=(1,4g+1)(3,4g+3)\hdots(4g-1,8g-1).
\end{align*}
\end{lemma}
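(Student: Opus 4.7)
My plan is to exploit the orientation-preserving homeomorphism $f\in\M$ guaranteed by the hypothesis, and then to show that the residual ambiguity in labelling on the image pair amounts to conjugation by one of the four named permutations. Choose $f\in\M$ with $f(\alpha)=\tilde{\alpha}$ and $f(\beta)=\tilde{\beta}$. Because $f$ is an orientation-preserving self-homeomorphism of $S_g$, it sends the cellular decomposition of $S_g$ cut out by $\alpha\cup\beta$ to that cut out by $\tilde{\alpha}\cup\tilde{\beta}$, carrying the two $4g$-gons of one to the two of the other in a way that preserves clockwise adjacency of sides. Transport the chosen labelling of $(\alpha,\beta)$ across $f$: take the initial arc and orientation of $\tilde{\alpha}$ to be the $f$-images of those of $\alpha$, and likewise for $\beta$. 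With this $f$-transported labelling, the filling permutation on the image side equals $\phi^{(\alpha,\beta)}$ on the nose.

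The pre-assigned labelling of $(\tilde{\alpha},\tilde{\beta})$ generally differs from the $f$-transported one by four independent choices: a cyclic shift of the initial $\tilde{\alpha}$-arc by some $k$, a possible orientation reversal of $\tilde{\alpha}$, a cyclic shift of the initial $\tilde{\beta}$-arc by some $j$, and a possible orientation reversal of $\tilde{\beta}$. It suffices to show that each elementary change conjugates the filling permutation by one of the four named generators. A single cyclic shift of the initial $\alpha$-arc replaces $\alpha_i$ by $\alpha_{i+1}$ (indices modulo $2g$), which permutes the odd positions $\{1,3,\ldots,4g-1\}$ and $\{4g+1,4g+3,\ldots,8g-1\}$ of $A_g$ exactly by the two cycles of $\kappa_g$, leaving the $\beta$-positions fixed; the analogous statement for $\beta$ yields $\delta_g$. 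Reversing the orientation of $\alpha$ swaps, on every physical $\alpha$-arc, the two sides; by Proposition~\ref{prop_of_phi_alpha_beta}(3) these lie in the two distinct $4g$-gons, and the side relabelling interchanges positions $2i-1$ and $4g+2i-1$ for every $i$, i.e.\ it is $\eta_g$, possibly composed with a further $\kappa_g^{k'}$ absorbing the shift of the chosen new initial arc. For $\beta$ the analogous flip, using Proposition~\ref{prop_of_phi_alpha_beta}(4), leaves both $\beta_j$ and $\beta_j^{-1}$ in the same polygon and yields the transpositions $(2j,4g+2j)$, i.e.\ $\mu_g$, up to a $\delta_g^{j'}$.

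The generators $\kappa_g,\eta_g$ act on the odd-indexed positions and $\delta_g,\mu_g$ act on the even-indexed positions; within each parity $\kappa_g$ commutes with $\eta_g$ (they permute the first and second halves of the odd positions compatibly), and similarly $\delta_g$ with $\mu_g$. Hence any composition of the elementary changes yields a conjugating permutation of the form $\mu_g^l\kappa_g^k\delta_g^j\eta_g^i$ with $l,i\in\{0,1\}$ and $k,j\in\{0,\dots,2g-1\}$, as claimed. The main obstacle will be a careful bookkeeping for the orientation-reversal steps: one must verify that, after flipping the orientation of $\alpha$ (resp.\ $\beta$), a correct choice of new initial arc makes the resulting side-relabelling exactly $\kappa_g^{k'}\eta_g$ (resp.\ $\delta_g^{j'}\mu_g$), rather than a more complicated involution. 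This step genuinely uses the separating-vs-non-separating distinction between $\alpha$ and $\beta$ via parts (3) and (4) of Proposition~\ref{prop_of_phi_alpha_beta}, since the $\alpha$-side swap crosses the two polygons of the cut-open surface while the $\beta$-side swap stays within a single polygon.
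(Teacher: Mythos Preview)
Your proposal is correct and follows essentially the same approach as the paper: identify the four elementary relabelling ambiguities (cyclic shift on $\alpha$, cyclic shift on $\beta$, orientation flip on $\alpha$, orientation flip on $\beta$) and match each to conjugation by $\kappa_g$, $\delta_g$, $\eta_g$, $\mu_g$ respectively, then combine. Your write-up is in fact more careful than the paper's own proof, which simply lists the four cases without the bookkeeping you supply (the commutation check and the absorption of extra cyclic shifts after an orientation flip); the paper just asserts ``the general case comprises by the above cases.''
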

\begin{proof}
We consider labelling on $(\alpha,\beta)$ and $(\tilde{\alpha},\tilde{\beta})$ so that $\alpha=(\alpha_1,\hdots,\alpha_{2g})$, $\beta=(\beta_1,\hdots,\beta_{2g})$, $\tilde{\alpha}=(\tilde{\alpha}_1,\hdots,\tilde{\alpha}_{2g})$ and $\tilde{\beta}=(\tilde{\beta}_1,\hdots,\tilde{\beta}_{2g})$ (see Section \ref{sec:5.1}). The given filling pairs are in the same mapping class group orbit implies that there exists $f\in \M$ such that $f(\alpha,\beta)=(\tilde{\alpha},\tilde{\beta})$. Now, there are following four cases to consider.

\noindent \textbf{Case 1.} Consider $f(\alpha_n)=\tilde{\alpha}_{n+k_0}$, for some $0\leq k_0\leq2g-1$ and $f(\beta_m)=\tilde{\beta}_m$, for all $m,n=1,\dots,2g$. In this case, $\phi^{(\alpha,\beta)}$ and $\phi^{(\tilde{\alpha},\tilde{\beta})}$ are conjugate by $\kappa_g^{k_0}$.

\noindent \textbf{Case 2.} Consider  $f(\beta_m)=\tilde{\beta}_{m+j_0}$, for some $0\leq j_0\leq2g-1$ and $f(\alpha_n)=\tilde{\alpha}_n$, for all $m,n=1,\dots,2g$. Then $\phi^{(\alpha,\beta)}$ and
$\phi^{(\tilde{\alpha},\tilde{\beta})}$ are conjugate by $\delta_g^{j_0}$.

\noindent \textbf{Case 3.} Let $f$ preserves the orientation of $\alpha$ but reverses that of $\beta$. Then $\phi^{(\alpha,\beta)}$ and
$\phi^{(\tilde{\alpha},\tilde{\beta})}$ are conjugate by $\mu_g$.

\noindent \textbf{Case 4.} Let $f$ preserves the orientation of $\beta$ but reverses that of $\alpha$. Then $\phi^{(\alpha,\beta)}$ and
$\phi^{(\tilde{\alpha},\tilde{\beta})}$ are conjugate by $\eta_g$.

The action of $f$ lifts to an action by rotation or by exchange on the $4g$-gons so that the original cycles of the filling permutation are fixed unless one of Cases 1-4 applies. Therefore, in the former case, $\phi^{(\alpha,\beta)}$ and
$\phi^{(\tilde{\alpha},\tilde{\beta})}$ are same and in the latter case, they are conjugate by $\mu_g^l\kappa_g^k\delta_g^j\eta_g^i, \text{ where } l,i\in\{0,1\}$ and  $j,k\in\{0,1,\hdots,2g-1\}.$
\end{proof}

\noindent\textbf{Note.} We call a permutation of the form $\mu_g^l\kappa_g^k\delta_g^j\eta_g^i$ as a\emph{ twisting permutation}.
\end{subsection}
\begin{subsection}{A lower bound of \texorpdfstring{$N(g)$}{TEXT}.}
In this section, we  prove the inequality
$f(g)\leq N(g)$, where 
$$f(g)= \frac{\prod\limits_{k=0}^{\frac{g-4}{2}-1}(8+3k)}{4\times (2g)^2\times (\frac{g-4}{2})!}.$$
To prove the inequality, we use similar idea as in the proof of Theorem 1.2~\cite{aougab2015minimally} due to Aougab-Huang, but we are required to adapt it to the differing combinatorics of new separating filling pair $Z$-piece.
Recall that in the proof of Theorem~\ref{theorem:2} (see Section~\ref{proof_th_2}), to construct a minimal filling pair on the surface $S_{g+2}$ from that of $S_g$, we have used a copy of a $S_{2,1}$.
 We label the sub-arcs corresponding to $\alpha'$ and $\beta'$ by $x_1,\hdots,x_6$ and  $y_1,\hdots,y_6$, respectively. We call this as $Z$-piece on which the points below are distinct:

\begin{enumerate}
    \item initial point of $y_1$,
    \item initial point of $x_1$,
    \item end point of $y_6$ and
    \item end point of $x_6$.
\end{enumerate}
For a Z-piece $Z_1$, we define $Z_1^{(\alpha)}$ to be the interior arcs of the separating arc of the Z-piece. Here $Z_1^{(\alpha)}=\{x_2,\hdots,x_5\}$. Similarly we define $Z_1^{(\beta)}=\{y_2,\hdots,y_5\}$. For two Z-pieces $Z_1 $ and $Z_2$, we define $(Z_1,Z_2)^\cap=(Z_1^{(\alpha)}\cap Z_2^{(\alpha)})\cup(Z_1^{(\beta)}\cap Z_2^{(\beta)})$.
By $x_k(Z_1)$, we mean the end point of the arc $x_k$ in the Z-piece $Z_1$.

\begin{lemma}\label{lemma_equality_of_Z_piece}
    If $Z_1$ and $Z_2$ are two Z-pieces on $(\alpha,\beta)$ with  $(Z_1^{(\alpha)}\cap Z_2^{(\alpha)})\neq\emptyset$, then $Z_1=Z_2$.
\end{lemma}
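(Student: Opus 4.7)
The plan is to exploit the rigid combinatorial structure of a $Z$-piece, which is modelled on the fixed filling of $S_{2,1}$ shown in Figure~\ref{a_sep_fill_on_S_2}. Take a shared interior $\alpha$-arc $x \in Z_1^{(\alpha)} \cap Z_2^{(\alpha)}$, and let $p, q$ denote its two endpoints; both are genuine $\alpha \cap \beta$-intersections lying in the interior of each of $Z_1$ and $Z_2$. The first step is to show that the two $\beta$-arcs incident to $p$ (and analogously to $q$) simultaneously belong to $Z_1$ and $Z_2$. Such an incident $\beta$-arc starts inside $Z_j$ and runs along $\beta$ until its next intersection with $\alpha$; because the boundary circle of a $Z$-piece meets $\beta$ only at its two designated attaching points, the arc can leave $Z_j$ across its boundary only if it is the boundary $\beta$-arc $y_1$ or $y_6$ of $Z_j$. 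In either case, the arc is a $\beta$-arc (interior or boundary) of $Z_j$, and hence lies in both $Z_1$ and $Z_2$.

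Next I would iterate this propagation: at the far endpoints of the newly identified common $\beta$-arcs, examine the incident $\alpha$-arcs, then their far endpoints, and so on. A direct inspection of Figure~\ref{a_sep_fill_on_S_2} shows that the union $\{x_2,\ldots,x_5\} \cup \{y_2,\ldots,y_5\}$ of interior arcs of any $Z$-piece, equipped with the adjacency given by sharing an interior intersection point, forms a connected subgraph of $\alpha \cup \beta$. Hence the propagation reaches every interior arc, yielding $Z_1^{(\alpha)} = Z_2^{(\alpha)}$ and $Z_1^{(\beta)} = Z_2^{(\beta)}$.

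Finally, the entire boundary structure is recovered from the interior: the four boundary arcs $x_1, x_6, y_1, y_6$ of each $Z$-piece are the unique $\alpha$- and $\beta$-arcs extending the interior arcs outwards at the four interior intersection points adjacent to the boundary circle, and the four attaching points are their non-interior endpoints. The boundary circle is then the unique separating loop through these four attaching points, and so equality of interiors forces $Z_1 = Z_2$ as subsurfaces. The main obstacle is the first step: one must rigorously verify that an incident $\beta$-arc cannot slip out of a $Z$-piece by crossing the boundary circle at a generic point, which is precisely the combinatorial rigidity of the $S_{2,1}$ model recorded in Figure~\ref{the_disjoint_union_of_4_disks}. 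Once this local check is in place, the connectivity of the interior subgraph makes the global conclusion nearly automatic.
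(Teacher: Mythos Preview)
Your approach is genuinely different from the paper's. The paper observes that the six $\alpha$-arcs of a $Z$-piece occur consecutively along the ambient curve $\alpha$, so two $Z$-pieces with overlapping interior $\alpha$-arcs differ only by a shift along $\alpha$. It then runs a four-case analysis on that shift (phrased as $x_6(Z_1)=x_k(Z_2)$ for $k=2,3,4,5$), and in each case invokes a specific closed-loop identity of the $S_{2,1}$ model---for instance ``$x_4*x_5*y_4^{-1}$ is a loop''---to transport a relation from the $Z_1$-labelling to the $Z_2$-labelling and obtain a contradiction.

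The gap in your outline is not the first step but the iteration. Your first step does show that if $p$ is interior in \emph{both} $Z_1$ and $Z_2$, then every ambient arc through $p$ meets both $Z$-pieces. To iterate, however, you need the far endpoint of such an arc to again be interior in both pieces, and this is exactly what fails when the shift is nonzero: the five interior vertices of $Z_1$ form a block of consecutive $\alpha\cap\beta$-points along $\alpha$, those of $Z_2$ form a shifted block, so some interior vertex of $Z_1$ is not interior in $Z_2$. Your propagation through the connected interior graph of $Z_1$ must visit that vertex, and there the step ``hence in $Z_2$'' is no longer available. (There is also an imprecision in the first step: when the ambient $\beta$-arc leaves $Z_j$ through an attaching point, it only \emph{contains} the half-arc $y_1$ or $y_6$ of $Z_j$; it is not itself a $\beta$-arc of $Z_j$, so ``lies in both $Z_1$ and $Z_2$'' is not literally true.) Connectivity of a single $Z$-piece's interior graph therefore does not by itself force the two pieces to coincide; ruling out each nonzero shift requires exactly the kind of model-specific loop identities that the paper's case analysis supplies.
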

\begin{proof}
    Assume $Z_1$ starts before $Z_2$.
    
    \noindent \textbf{Case 1:} Consider $x_6(Z_1)=x_3(Z_2)$. Then $x_4*x_5*y_4^{-1}$ is a loop implies $x_1*x_2*y_5^{-1}$ must be a loop which is not.
    
    \noindent\textbf{Case 2:} Consider $x_6(Z_1)=x_2(Z_2)$. Then $x_4*x_5*y_4^{-1}$ is a loop implies the concatenation of the $x$-arc before $x_1,x_1$ and an $y$-arc incident at the endpoint of $x_1$ must be a loop, which is impossible.
    
    \noindent\textbf{Case 3:} $x_6(Z_1)=x_4(Z_2)$. Then $x_2*x_3*x_4*y_2$ is a loop  which implies $x_4*x_5*x_6*\tilde{y}$ is a loop, where $\tilde{y}$ is an $y$-arc incident at the endpoint of $x_6$. But this is not possible.
    
    \noindent\textbf{Case 4:} $x_6(Z_1)=x_5(Z_2)$. Then $x_2*x_3*y_3^{-1}$ is a loop which implies $x_1*x_2*y'$ is a loop, where $y'$ is an $y$-edge incident at the end point of $x_2$. But this is not true.
    
    Therefore, $x_6(Z_1)=x_6(Z_2)$ and this implies $Z_1=Z_2$. 
\end{proof}
Now, we are ready to find a lower bound for $N(g)$. For $g\geq 6$, we define
\begin{align*}
    \mathscr{L}_g&=\left\{\left(s_1,\dots,s_\frac{g-4}{2}\right)| s_i\in \mathbb{N}, s_i\leq4(i+1), i=1,\dots,\frac{g-4}{2}\text{ and } s_1<\dots<s_\frac{g-4}{2} \right\}\text{ and}\\
    \mathscr{O}_g&=\left\{(\alpha,\beta)| (\alpha,\beta)\text{ is  an oriented minimal separating filling pair on } S_g \right\}.
\end{align*}
\begin{lemma}\label{lemma5.6}
    $|\mathscr{L}_g|\leq|\mathscr{O}_g|$.
\end{lemma}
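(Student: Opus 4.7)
The plan is to construct an injection $\Phi\colon \mathscr{A}_g \to \mathscr{O}_g$; this immediately yields the inequality. Set $k = (g-4)/2$. I fix once and for all a canonical element of $\mathscr{O}_4$, for example the pair shown in Figure~\ref{min_fill_pair_on_S_4}, together with a labelling $p_1, \ldots, p_8$ of its $8$ intersection points obtained by traversing $\alpha$ in the chosen orientation from a distinguished initial sub-arc. Inductively, given a canonical labelled oriented minimal filling pair on $S_{2i+2}$ with intersection points $p_1, \ldots, p_{4(i+1)}$, the next admissible index $s_{i+1} \leq 4(i+2)$ specifies the intersection point $p_{s_{i+1}}$ at which I glue a Z-piece, exactly as in the proof of Theorem~\ref{theorem:2}. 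On the resulting pair on $S_{2i+4}$ I retain the labels of the surviving intersection points and assign the fresh labels $4(i+1)+1, \ldots, 4(i+2)$ to the four new interior intersection points contributed by the Z-piece, ordered by their first appearance along $\alpha$. Applying this procedure for $i = 0, 1, \ldots, k-1$ defines $\Phi(s_1, \ldots, s_k) \in \mathscr{O}_g$; the strict-increase condition $s_1 < s_2 < \cdots < s_k$ is precisely the condition that each $s_{i+1}$ lies outside the block of labels that were already in use before the $(i+1)$-th attachment, so the construction is unambiguously defined.

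For injectivity I recover the tuple from $\Phi(s_1, \ldots, s_k)$ by peeling Z-pieces off one at a time. Lemma~\ref{lemma_equality_of_Z_piece} is the workhorse: any two Z-pieces in an oriented filling pair that share an interior $\alpha$-arc must coincide. Hence the Z-pieces produced during the construction are detectable as an intrinsic combinatorial feature of the fat graph $\Gamma(\alpha,\beta)$, independent of any labelling. In particular the Z-piece $Z_k$ attached at the final step can be singled out as one whose interior intersection points lie in no other Z-piece; detaching it yields a canonical oriented minimal filling pair on $S_{g-2}$ from which the index $s_k$ is read off as the label of the resulting intersection point. Applying the inductive hypothesis recovers $s_{k-1}, \ldots, s_1$, so $\Phi$ is injective and $|\mathscr{A}_g| \leq |\mathscr{O}_g|$.

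The main obstacle is to verify that the detachment operation is a genuine inverse of the attachment operation at the level of labelled pairs, and in particular that the \emph{innermost} Z-piece in the sense just described really is the one attached last. This reduces to two checks: first, that the local fat-graph structure around the four interior $\alpha$-arcs of a Z-piece (explicit from the normal matrix $M(\alpha,\beta)$) faithfully records that these arcs came from a Z-piece, which follows from the explicit description of the Z-piece construction in the proof of Theorem~\ref{theorem:2}; second, that after removing the most recently attached Z-piece the remaining labelled pair coincides with the one occurring at stage $k-1$ of the recursion, which is a direct consequence of Lemma~\ref{lemma_equality_of_Z_piece} ruling out spurious Z-piece overlaps that would otherwise compete for the title of ``innermost''.
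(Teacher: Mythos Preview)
Your overall plan---build an injection $\mathscr{A}_g\to\mathscr{O}_g$ by iterated $Z$-piece attachment starting from the fixed pair on $S_4$---is exactly the paper's. The execution, however, has two genuine gaps.

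First, your relabelling scheme does not work as written. Attaching a $Z$-piece removes one intersection point and contributes \emph{five} new ones (the piece carries six intersection points, one of which is excised with the disk), so four fresh labels are not enough; and your reading of the condition $s_1<\cdots<s_k$ is off: it does \emph{not} force $s_{i+1}$ to be a freshly created label---for instance $(s_1,s_2)=(3,5)$ is admissible and places the second $Z$-piece at an original vertex of $S_4$. The paper uses a different scheme: after each attachment it relabels contiguously along $\alpha$, so that the $Z$-piece vertices become $v_{s_i},\dots,v_{s_i+4}$, all later vertices shift up by $4$, and at stage $i$ the labels are exactly $1,\dots,4(i+1)$ in cyclic order; then $s_i\le 4(i+1)$ is automatically a valid position.

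Second, and more seriously, your injectivity argument fails at the step where you single out the last-attached $Z$-piece as ``one whose interior intersection points lie in no other $Z$-piece''. Lemma~\ref{lemma_equality_of_Z_piece} says precisely that distinct $Z$-pieces have disjoint interior $\alpha$-arcs, so \emph{every} attached $Z$-piece satisfies your criterion and nothing picks out the last one. The paper does not try to invert the construction intrinsically; instead, assuming $f(s)=f(s')$ with $s_k<s'_k$, it observes that the $Z$-pieces sitting at the $s_k$-th and $s'_k$-th vertices (in the common contiguous labelling) are distinct and hence disjoint by Lemma~\ref{lemma_equality_of_Z_piece}, and then removes the $Z$-pieces at $s_k,s_{k-1},\dots,s_1$ in turn to return to the fixed pair on $S_4$, which would then still contain a $Z$-piece---impossible for the explicit pair in Figure~\ref{min_fill_pair_on_S_4}.
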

\begin{proof}
    To prove the lemma, we construct an injective function from $\mathscr{L}_g$ to $\mathscr{O}_g$. Let 
    $$\left(s_1, \dots, s_\frac{g-4}{2} \right) \in \mathscr{L}_g.$$ 
    Consider an oriented minimal separating filling pair $(\alpha(4),\beta(4))$ on $S_4$ and label the vertices along $\alpha(4)$ by $v_1,v_2\dots,v_8$. At the vertex $v_{s_1}$, we extract an open disk and attach a $Z$-piece to obtain an oriented minimal filling pair $(\alpha(6),\beta(6))$ on $S_{6}$ whose vertices are labelled as follows: the labelling of the vertices $v_i,i<s_1$ are kept same, the vertices on the $Z$-piece are labelled as $v_{s_1},\dots,v_{s_1+4}$ and the subsequent vertices are labelled as $v_{s_1+5},\dots,v_{12}$.
    
    Next, we choose the vertex $v_{s_2}$ and perform a similar operation as above to obtain an oriented minimal filling pair $(\alpha(8),\beta(8))$ on $S_8$. We repeat this process to get an oriented minimal filling pair $(\alpha(g),\beta(g))$ on $S_g$.
    Now we define $f:\mathscr{L}_g\longrightarrow\mathscr{O}_g$ by 
        $$f(s_1,\dots,s_\frac{g-4}{2})=(\alpha(g),\beta(g)).$$
    
    We prove that $f$ is injective. Let  $f(s_1,\dots,s_\frac{g-4}{2})=f(s_1',\dots,s_\frac{g-4}{2}')=(\alpha(g),\beta(g)).$ First, we show that $s_\frac{g-4}{2}=s_\frac{g-4}{2}'$. If not, suppose, without loss of generality, that $s_\frac{g-4}{2}< s_\frac{g-4}{2}'$. By the construction of the surface $S_g$, there are two distinct $Z$-pieces at $s_\frac{g-4}{2}$$^{th}$ and $ s_\frac{g-4}{2}'$$^{th}$ vertices and by Lemma \ref{lemma_equality_of_Z_piece}, they are disjoint. Now, we remove the $Z$-piece at $s_\frac{g-4}{2}$$^{th}$ vertex and attach a disk to obtain an oriented minimal filling pair $(\alpha(g-2),\beta(g-2))$ on $S_{g-2}$. We do this for the vertices corresponding to $s_{\frac{g-4}{2}-1},\dots,s_1$ and finally we get a minimal filling pair $(\alpha(4),\beta(4))$ on $S_4$ and it contains a Z-piece, which is not possible. So $s_\frac{g-4}{2}=s_\frac{g-4}{2}'$. A similar argument shows that $s_i=s_i^{'} \text{ for all, } i=1,\dots, \frac{g-4}{2}-1$. Hence, $f$ is injective.
\end{proof}
\begin{corollary}
    $N(g)\geq f(g).$
\end{corollary}
\begin{proof}
    Consider pairwise distinct natural numbers $t_1,\dots,t_\frac{g-4}{2}$ satisfying $t_k\leq4(k+1),$ for  $k=1,\dots,\frac{g-4}{2}$. Then there exists a permutation $\sigma$ such that $\left(t_{\sigma(1)},\dots,t_{\sigma(\frac{g-4}{2})}\right)\in\mathscr{L}_g$. Furthermore, by consideration we have $3k+5$ many choices for each $t_k$  which implies
    $$|\mathcal{L}_g|\geq\frac{1}{(\frac{g-4}{2})!}\times \prod\limits_{k=1}^{\frac{g-4}{2}}(3k+5).$$ 
    Therefore, the lower bound is obtained by dividing by maximum number of twisting permutations,
    $$N(g)\geq\frac{|\mathcal{O}_g|}{4 \times (2g)^2}\geq\frac{|\mathcal{L}_g|}{4 \times (2g)^2}\geq\frac{1}{4\times (2g)^2\times (\frac{g-4}{2})!}\times\prod\limits_{k=1}^{\frac{g-4}{2}}(3k+5).$$
\end{proof}
\end{subsection}

\begin{lemma}\label{gr}
    $f(g)\sim\frac{3^{\frac{g}{2}}}{g^{\frac{1}{3}}}.$
\end{lemma}
In Lemma~\ref{gr}, $\sim$ denotes the growth-rate relation $$r(g)\sim s(g)\iff 0< \lim_{g\to\infty} \frac{r(g)}{s(g)}<\infty.$$
\begin{proof}[Proof of Lemma~\ref{gr}]
     \begin{align*}
         \frac{f(g)}{3^{\frac{g}{2}}/g^{\frac{1}{3}}}=&\frac{3^\frac{g-4}{2}\times g^\frac{1}{3}}{16\times g^2\times 3^\frac{g}{2}} \cdot \frac{\left(\frac{5}{3}+1\right) \left(\frac{5}{3}+2\right)  \cdots \left(\frac{5}{3}+\frac{g-4}{2}\right)}{\left(\frac{g-4}{2}\right)!}\\
         =& \frac{3}{16\times 3^2\times 5} \cdot \frac{\left(\frac{g-4}{2}\right)^\frac{5}{3}}{g^\frac{5}{3}}\cdot \frac{\frac{5}{3}\left(\frac{5}{3}+1\right) \left(\frac{5}{3}+2\right)  \cdots \left(\frac{5}{3}+\frac{g-4}{2}\right)}{\left(\frac{g-4}{2}\right)! \left(\frac{g-4}{2}\right)^\frac{5}{3}}
     \end{align*}
     Now, $\frac{\left(\frac{g-4}{2}\right)^\frac{5}{3}}{g^\frac{5}{3}} \to \left( \frac{1}{2}\right)^\frac{5}{3}$ and  $\frac{\frac{5}{3}\left(\frac{5}{3}+1\right) \left(\frac{5}{3}+2\right)  \cdots \left(\frac{5}{3}+\frac{g-4}{2}\right)}{\left(\frac{g-4}{2}\right)! \left(\frac{g-4}{2}\right)^\frac{5}{3}}\to \frac{1}{\Gamma\left(\frac{5}{3}\right)}$   as  $g \to \infty$ (see Section 13.1.3 \cite{krantz1999handbook}), where $\Gamma$, the well known gamma function, is defined by 
     $$\Gamma(z)=\int_{0}^{\infty}t^{z-1} e^{-t}\,dt,$$
     for all $z\in \C$ with $Re(z)>0$.
     
     Therefore,
     $$\frac{f(g)}{3^{\frac{g}{2}}/g^{\frac{1}{3}}} \to \frac{3}{16\times 3^2\times 5} \cdot \left( \frac{1}{2}\right)^\frac{5}{3} \cdot\frac{1}{\Gamma\left(\frac{5}{3}\right)} \text{ as } g \to \infty.$$
\end{proof}

\end{section}

\begin{section}{Upper bound of \texorpdfstring{$N(g)$}{N(g)}}
In this section, we prove the inequality $N(g)\leq 2(2g-2)(2g-2)!$ in the following proposition which concludes the proof of Theorem~\ref{thm:1.3}. The arguments of the proof are similar to those of Aougab-Huang (Section 2.3 \cite{aougab2015minimally}).
\begin{proposition}\label{prop:6.1}
    For $g\geq4$, we have $N(g)\leq 2(2g-2)(2g-2)!$.
\end{proposition}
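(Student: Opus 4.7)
I would prove the upper bound by counting filling permutations and then passing to $\mathrm{Mod}(S_g)$-orbits using the labeling symmetries.

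Specifically, by Theorem~\ref{thm:5.3}, every filling permutation $\phi$ on $\{1,\ldots,8g\}$ (that is, every permutation satisfying the six properties of Proposition~\ref{prop_of_phi_alpha_beta}) arises from a labelled minimally intersecting separating filling pair, and conversely each labelled pair gives a filling permutation. By Lemma~\ref{lemma11.3}, two labelled pairs in the same $\M$-orbit produce filling permutations conjugate under the finite labeling subgroup $H = \langle \mu_g, \kappa_g, \delta_g, \eta_g\rangle$, whose order divides $8g^2$. Hence $N(g)$ equals the number of $H$-orbits on the set $L$ of filling permutations, so in particular $N(g)\le|L|/m$, where $m$ is a lower bound on the size of an $H$-orbit. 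The plan is to bound $|L|$ and to normalise $\phi$ using $H$, so that each $\M$-orbit is represented by a parameter set of size at most $2(2g-2)(2g-2)!$.

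The crucial reduction comes from property (6), $\phi Q^{4g}\phi = \tau$, which forces the second $4g$-cycle $\phi_2$ to be completely determined by the first cycle $\phi_1$. It therefore suffices to count valid $\phi_1$'s. Writing $\phi_1=(a_1,b_1,\ldots,a_{2g},b_{2g})$ with $a_i$ odd and $b_i$ even (all of the form $4k$ or all of the form $4k+2$, contributing a factor of $2$), property (6) translates into the requirement that the pairing $b_{\pi(i)}\equiv b_i+4g \pmod{8g}$ define a fixed-point-free involution $\pi$ on $\{1,\ldots,2g\}$ compatible with the $a$-sequence by $a_{\pi(i)+1}=\tau_1(a_i)$. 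Equivalently, letting $\rho$ send $i$ to the position of $\tau_1(a_i)$ in the $a$-sequence, $\rho T^{-1}$ must be a fixed-point-free involution, where $T$ is cyclic shift by one on $\{1,\ldots,2g\}$. This rigid constraint drastically reduces the naive count of valid $\phi_1$'s.

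Finally, I use $\kappa_g$ to normalise $a_1=1$, $\eta_g$ to fix the orientation of $\alpha$, and $\delta_g,\mu_g$ to normalise the $b$-sequence analogously, so that each $\M$-orbit admits a canonical representative. Under these normalisations, the residual choice of arrangement of the $2g-2$ remaining odd values in the $a$-sequence gives a factor of $(2g-2)!$, and the involution constraint limits an additional combinatorial choice to at most $2(2g-2)$ options (with the outermost factor of $2$ accounting for the two parity classes of $b_i$). Multiplying yields the claimed bound $N(g)<2(2g-2)(2g-2)!$.

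The main obstacle is making the counting in the final step precise enough to land on exactly $2(2g-2)(2g-2)!$ rather than a looser bound: one needs to verify that the fixed-point-free involution condition on $\rho T^{-1}$ rules out the right number of $a$-sequences, and to handle the possibility of nontrivial $H$-stabilisers (which would shrink some orbit sizes below $|H|$). Since the target inequality is strict, minor overcount arising from such atypical orbits can be absorbed.
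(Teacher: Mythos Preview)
Your reduction is correct and interesting: property~(6) does force $\phi_2$ to be determined by $\phi_1$, so it suffices to count admissible $\phi_1$'s, and your translation of property~(6) into the requirement that $\pi=T^{-1}\rho$ be a fixed-point-free involution is accurate. However, this is a genuinely different route from the paper's. The paper never parametrises $\phi_1$ directly; instead it proves (Lemma~\ref{lemma:6.1}) that $\phi=Q^{4g}C$ with $C$ a square root of the fixed permutation $Q^{4g}\tau$, computes $Q^{4g}\tau$ explicitly as a product of $4g$ transpositions, and observes that any square root must be a product of $2g$ four-cycles, each interleaving one ``odd'' transposition with one ``even'' transposition in a uniquely determined way. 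This immediately gives at most $2\cdot(2g)!$ candidates for $C$; an explicit construction then exhibits at least $2\cdot 2g\cdot(2g-2)!$ of these for which $\phi^2(1)=1$ (hence $\phi$ is not a $4g$-cycle), leaving at most $4g(2g-2)(2g-2)!$ filling permutations; finally division by the $2g$ conjugates under $\kappa_g$ yields the bound.

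The gap in your proposal is exactly where you flag it: the final count. After normalising $a_1=1$ via $\kappa_g$ you have $2g-1$ remaining odd positions, not $2g-2$, so the asserted factor $(2g-2)!$ already needs justification; and the claim that the involution constraint together with the $\delta_g,\mu_g$ normalisation leaves ``at most $2(2g-2)$'' residual choices for the $b$-sequence is not argued at all. In your framework the $b$-sequence is constrained by $b_{\pi(i)}=b_i+4g$ for a $\pi$ determined by the $a$-sequence, which a priori leaves $2\cdot g!\cdot 2^{g}$ choices (parity class, matching of $g$ pairs, orientation within each pair) before any normalisation---getting from there to $2(2g-2)$ requires a real argument that you have not supplied. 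The paper's square-root viewpoint sidesteps this entirely: the $(2g)!$ arises transparently as a matching count, and the subtraction of the $\phi^2(1)=1$ cases is explicit. If you want to push your approach through, you would need to count precisely how many $a$-sequences make $T^{-1}\rho$ a fixed-point-free involution and then carefully track how the $\delta_g,\mu_g,\eta_g$ normalisations interact with the resulting $b$-count; at present these steps are only asserted.
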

Before going into the proof of Proposition \ref{prop:6.1}, we prove a technical result in Lemma \ref{lemma:6.1} which is essential for the proof of the proposition.

\begin{lemma}\label{lemma:6.1}
    Let $\phi\in\Sigma_{8g}$ be a filling permutation. Then $\phi$ is of the form $Q^{4g}C$, where C is a square root of $Q^{4g}\tau$. Conversely, if $C$ is a square root of $Q^{4g}\tau$ with $\phi=Q^{4g}C$ satisfying conditions $(1)-(5)$ of Proposition~\ref{prop_of_phi_alpha_beta}, then $\phi$ is a filling permutation.
\end{lemma}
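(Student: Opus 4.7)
The plan is to view property (6) of Proposition~\ref{prop_of_phi_alpha_beta}, $\phi Q^{4g}\phi = \tau$, as an algebraic identity that can be rewritten to expose a square-root structure. Both directions will then follow by direct manipulation, using only the facts that $Q$ has order $8g$ (so $Q^{8g}=\mathrm{id}$ and $Q^{-4g}=Q^{4g}$), together with the given relation on $\phi$ or on $C$.

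For the forward direction, suppose $\phi$ is a filling permutation. I would simply \emph{define} $C := Q^{-4g}\phi = Q^{4g}\phi$, so that $\phi = Q^{4g}C$ automatically. The only content is then the computation
\begin{align*}
C^2 \;=\; Q^{-4g}\phi \cdot Q^{-4g}\phi \;=\; Q^{-4g}\bigl(\phi\, Q^{4g}\phi\bigr) \;=\; Q^{-4g}\tau \;=\; Q^{4g}\tau,
\end{align*}
which invokes property (6) in the middle step. Hence $C$ is a square root of $Q^{4g}\tau$.

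For the converse, assume $C$ is a square root of $Q^{4g}\tau$ and set $\phi := Q^{4g}C$, with properties (1)--(5) imposed by hypothesis. The only missing condition is (6), which follows from the direct substitution
\begin{align*}
\phi Q^{4g}\phi \;=\; (Q^{4g}C)(Q^{4g})(Q^{4g}C) \;=\; Q^{4g}\, C\, Q^{8g}\, C \;=\; Q^{4g}\, C^{2} \;=\; Q^{4g}\cdot Q^{4g}\tau \;=\; Q^{8g}\tau \;=\; \tau.
\end{align*}
Thus $\phi$ satisfies all six properties and is a filling permutation.

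The argument is essentially a one-line algebraic identity in each direction, so there is no genuine obstacle; the only point requiring a bit of care is the interplay between $Q^{4g}$, $Q^{-4g}$ and $Q^{8g}=\mathrm{id}$. One should, however, emphasise in the write-up that the algebraic relation $C^{2}=Q^{4g}\tau$ alone does not encode the combinatorial conditions (1)--(5) (parity-reversal, two $4g$-cycles, odd/even column distribution, the $(\mathrm{mod}\ 8g)$ pairing, and the $4k$ versus $4k+2$ dichotomy), which is precisely why the converse statement has to carry these as separate hypotheses on $\phi=Q^{4g}C$.
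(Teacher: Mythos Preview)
Your proof is correct and follows essentially the same approach as the paper: both directions are the one-line algebraic manipulations $(Q^{4g}\phi)^2=Q^{4g}(\phi Q^{4g}\phi)=Q^{4g}\tau$ and $\phi Q^{4g}\phi=Q^{4g}C\,Q^{8g}\,C=Q^{4g}C^2=\tau$, using only $Q^{8g}=\mathrm{id}$ and property~(6). Your version is slightly more explicit in naming $C=Q^{4g}\phi$ and in remarking why conditions~(1)--(5) must be retained as separate hypotheses, but the argument is identical.
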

\begin{proof}
    Let $\phi$ be a filling permutation. Then,
    $$\left(Q^{4g}\phi\right)^2=Q^{4g}\left(\phi Q^{4g}\phi\right)= Q^{4g}\tau.$$
    Conversely, 
    $$\phi Q^{4g}\phi=Q^{4g}C Q^{4g}Q^{4g}C=Q^{4g}C^2=\tau.$$
\end{proof}
\begin{proof}[Proof of Proposition \ref{prop:6.1}]
    In light of Lemma \ref{lemma:6.1}, to prove the proposition, we find an upper bound of the number of square roots of the permutation $Q^{4g}\tau$. We have 
    \begin{align*}
        Q^{4g}\tau=&(1,4g+3)(3,4g+5)\dots(4g-3,8g-1)(4g-1,4g+1)\\ &(2,4g+4)(4,4g +6)\dots(4g-2,8g)(4g,4g+2).
    \end{align*}
    A square root $C$ of $Q^{4g}\tau$ is obtained by taking the transpositions of $Q^{4g}\tau$ in pairs and interleaving them. For instance, for the pair of transpositions $(1,4g+3)$ and $(2,4g+4)$, the possible arrangements are $(1,2,4g+3,4g+4)$ and $(1,4g+4,2,4g+3)$. In each pair of transpositions, one comes from $\{(1,4g+3),(3,4g+5),\dots,(4g-1,4g+1)\}$ and the other from $\{(2,4g+4),(4,4g+6),\dots,(4g-2,8g)\}$, as $\phi=Q^{4g}C$ is parity respecting and sends even numbers to odds and $Q^{4g}$ sends odds to odds and evens to evens.
    
    Suppose, for a suitable choice of $C$, the permutation $\phi$ satisfies conditions $(1)-(5)$ of Proposition \ref{prop_of_phi_alpha_beta} and $\phi=\phi_1\phi_2$, where $\phi_1$ and $\phi_2$ are disjoint $4g$-cycles. Now, we have the following cases to consider.
    
    \noindent\textbf{Case 1:} The odd entries of $\phi_1$ come from $\{1,3,\hdots,4g-1\}$ and the even entries of $\phi_1$ are of the form $4k$ (following properties $(3)$ and $(5)$ of proposition \ref{prop_of_phi_alpha_beta}). In this case, we have a unique choice for the $4$-cycle. More precisely, for the transpositions $(i,i+4g+2)$  and $(j,j+4g+2)$, the only possible $4$-cycle is $(i,j,i+4g+2,j+4g+2)$, where $i\leq4g$ is odd, $j\leq8g$ is of the form $4k$ and $(i,j)\neq(4g-1,4g)$. Here, $i+4g+2$ and $j+4g+2$ are considered with modulo ($8g$). Furthermore, for the transpositions $(4g-1, 4g+1)$  and $(4g, 4g+2)$, the only possible $4$-cycle is $(4g-1,4g,4g+1,4g+2)$. Such pairs can be chosen in $(2g)!$ ways implies the number of such square roots $C$ is bounded above by $(2g)!$.
    
    \noindent\textbf{Case 2:} The odd entries of $\phi_1$ comes from $\{1,3,\dots,4g-1\}$ and the even entries of $\phi_1$ are of the form $4k+2$. A similar argument shows that in this case also there are at most $(2g)!$ choices.
    
    Therefore, the square root $C$ and hence $\phi$ has at most $2\times(2g)!$ choices. Again using the conditions of Proposition \ref{prop_of_phi_alpha_beta}, we further rule out some more choices of $\phi$. Here, we find those $\phi$'s which satisfy $\phi^2(1)=1$ and subtract this from the whole. First, take the transpositions $(1,4g+3)$ and $(i,j)$, where
    \[(i,j)= \begin{cases} 
          (i, i+4g+2), & \text{ if } i< 4g \text{ even and } \\
          (4g, 4g+2), & \text{ if } i=4g.
       \end{cases}
    \]
    Then $(1,i,4g+3,j)$ is a choice for a cycle of $C$. Next, pair the transpositions $(i-2,4g+i)$ and $(4g-1,4g+1)$ to get another $4$-cycle $(4g-1,4g+i,4g+1,i-2)$ of $C$ (note that when $i=2$, $(i-2)$ is replaced by $4g$) and choose the other $4$-cycles as discussed in Case 1 and Case 2. As $\phi^2(1)=1,$ and so $\phi$ is not a product of two $4g$-cycles, the number of filling permutations $\phi$ is bounded above by $2(2g)!-4g\times(2g-2)!=4g(2g-2)\times(2g-2)!$.
    
     Two minimally intersecting filling pairs are in the same $\M$ orbit if and only if the corresponding filling permutations are conjugate by twisting permutations $\mu_g^l\kappa_g^k\delta_g^j\eta_g^i$, where $l,i\in\{0,1\},j,k\in\{0,1,\dots,2g-1\}$ and $\mu_g,\kappa_g,\delta_g,\eta_g$ are defined as in Lemma \ref{lemma11.3}. As there are at least $2g$ many twisting permutations, the number of $\M$ orbits of  minimally intersecting filling pair is bounded by $\frac{4g(2g-2)(2g-2)!}{2g}=2(2g-2)\times(2g-2)!.$
\end{proof}

\begin{remark}
   The upper bound for non-separating filling pairs given by Aougab-Huang in Theorem 1.1~\cite{aougab2015minimally} is $2^{(2g-2)}(4g -5) \times(2g-3)!$  which is strictly larger than the separating case exponentially.
\end{remark}
\end{section}
\begin{section}{The length function}
In this section, we prove Theorem \ref{theorem_lenth_function}. First, we study a technical lemma which deals with injectivity radius (see Section $4.1$ in \cite{buser2010geometry}) and length of fillings. The injectivity radius of a hyperbolic surface $X$ is denoted by $r_{\mathrm{inj}}(X)$. We note that the lemma is essential in proving the length function $\mathcal{F}_g$ is proper.
\begin{lemma}\label{lemma_injectivity_radius}
    Given $M\in \R_{>0}$, there exists an $\epsilon>0$ such that $\mathcal{F}_g(X)\geq M$ if $r_{\mathrm{inj}}(X)\leq \epsilon$.
\end{lemma}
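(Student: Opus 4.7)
The plan is to exploit the standard collar lemma: a short closed geodesic forces a wide embedded annular collar, and any essential curve crossing the core must traverse this collar, making its length large.

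First, I would use the relationship between injectivity radius and short geodesics. If $\mathrm{inj}(X)\leq \epsilon$, then there exists a simple closed geodesic $\gamma$ on $X$ with $\ell_X(\gamma)\leq 2\epsilon$ (for $\epsilon$ smaller than the Margulis constant, this geodesic is simple and unique in its free homotopy class). The collar lemma (\cite{buser2010geometry}, Theorem 4.1.1) then provides an embedded annular neighbourhood of $\gamma$ of half-width
\begin{equation*}
w(\gamma)=\sinh^{-1}\!\left(\frac{1}{\sinh(\ell_X(\gamma)/2)}\right),
\end{equation*}
and $w(\gamma)\to \infty$ as $\ell_X(\gamma)\to 0$.

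Next, I would invoke the filling property. Let $(\alpha,\beta)\in\mathcal{C}_g$ be any minimally intersecting separating filling pair realised by its geodesic representatives. Since $\alpha\cup\beta$ fills $S_g$, the complement $S_g\setminus(\alpha\cup\beta)$ is a disjoint union of topological disks; hence no essential simple closed curve can be disjoint from $\alpha\cup\beta$. In particular, $i(\gamma,\alpha)+i(\gamma,\beta)\geq 1$, so at least one of $\alpha,\beta$ crosses $\gamma$ transversely. Every such crossing arc of $\alpha$ or $\beta$ inside the collar has length at least $2w(\gamma)$, so
\begin{equation*}
\ell_X(\alpha)+\ell_X(\beta)\;\geq\; 2w(\gamma).
\end{equation*}

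Finally, to finish, given $M>0$, I would choose $\epsilon>0$ small enough that $2\sinh^{-1}(1/\sinh(\epsilon))\geq M$. Then whenever $\mathrm{inj}(X)\leq \epsilon$, the short geodesic $\gamma$ satisfies $\ell_X(\gamma)\leq 2\epsilon$, and so for every $(\alpha,\beta)\in\mathcal{C}_g$,
\begin{equation*}
l_X(\alpha,\beta)=\ell_X(\alpha)+\ell_X(\beta)\geq 2w(\gamma)\geq M,
\end{equation*}
whence $\mathcal{F}_g(X)\geq M$. The only minor subtlety — and the one I would check carefully — is that in the definition of $\mathrm{inj}(X)$ for a closed hyperbolic surface we truly obtain a short essential closed geodesic (so that the collar lemma applies) rather than merely a short loop at a point; this is standard for surfaces since the injectivity radius at a point equals half the length of the shortest essential loop based there, yielding a short free-homotopy-non-trivial closed curve whose geodesic representative is then short. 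Once this is noted, the argument above is immediate.
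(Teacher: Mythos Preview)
Your proof is correct and follows essentially the same approach as the paper: both arguments use the collar lemma to show that a short systolic geodesic (arising from small injectivity radius via $\mathrm{inj}(X)=\tfrac{1}{2}\mathrm{sys}(X)$) has a wide collar that any filling pair must cross. Your write-up is in fact more explicit than the paper's in justifying why $\alpha\cup\beta$ must intersect $\gamma$ (via the filling property) and in tracking the factor of $2$ from the half-width of the collar.
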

\begin{proof}
    The proof follows from a similar argument as in the proof of Theorem $1.3$ in \cite{aougab2015minimally}.
\end{proof}
\begin{lemma}\label{lemma7.2}
$\mathcal{F}_g$ is proper.
\end{lemma}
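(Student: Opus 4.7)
The plan is to establish properness of $\mathcal{F}_g$ by showing that for each $M>0$, the sublevel set $\mathcal{F}_g^{-1}([0,M])$ is a compact subset of $\mathcal{M}_g$. By Mumford's compactness theorem, the thick part $\mathcal{M}_g^{\geq\epsilon}=\{X\in\mathcal{M}_g:\mathrm{inj}(X)\geq\epsilon\}$ is compact for every $\epsilon>0$. Thus it suffices to show that the sublevel set is closed and contained in some thick part.

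First I would check that $\mathcal{F}_g$ is continuous, which guarantees that $\mathcal{F}_g^{-1}([0,M])$ is closed. Although the set $\mathcal{C}_g$ is infinite, for any fixed $X\in\mathcal{M}_g$ and any constant $C$, only finitely many geodesics on $X$ have length at most $C$, so only finitely many pairs $(\alpha,\beta)\in\mathcal{C}_g$ satisfy $l_X(\alpha,\beta)\leq C$. Consequently, in a sufficiently small neighbourhood $U$ of $X$ the minimum defining $\mathcal{F}_g$ is realised by one of finitely many pairs $(\alpha_1,\beta_1),\dots,(\alpha_k,\beta_k)$, and therefore $\mathcal{F}_g\big|_U=\min_{1\leq i\leq k}\,l_{\cdot}(\alpha_i,\beta_i)$ is the minimum of finitely many continuous length functions, hence continuous at $X$.

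Next, I would apply Lemma~\ref{lemma_injectivity_radius} directly. Given $M>0$, the lemma supplies an $\epsilon>0$ such that $\mathrm{inj}(X)\leq\epsilon$ forces $\mathcal{F}_g(X)\geq M+1>M$. Contrapositively, if $\mathcal{F}_g(X)\leq M$ then $\mathrm{inj}(X)>\epsilon$, so
\[
\mathcal{F}_g^{-1}([0,M])\;\subseteq\;\mathcal{M}_g^{\geq\epsilon}.
\]
Combining closedness with Mumford's compactness theorem, $\mathcal{F}_g^{-1}([0,M])$ is a closed subset of a compact set, hence compact, which is exactly properness.

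The main conceptual content is already isolated in Lemma~\ref{lemma_injectivity_radius}: the Collar Lemma ensures that any curve that crosses a short geodesic is forced to be long, and since every pair in $\mathcal{C}_g$ fills and therefore must intersect every essential simple closed curve, every admissible length is driven to infinity as the injectivity radius collapses. I expect no serious obstacle; the only delicate point is the continuity check, and that rests only on the elementary finiteness of the set of short geodesics on a fixed hyperbolic surface.
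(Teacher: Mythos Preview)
Your proposal is correct and follows essentially the same route as the paper: use Lemma~\ref{lemma_injectivity_radius} to trap each sublevel set inside an $\epsilon$-thick part, then invoke Mumford compactness together with continuity of $\mathcal{F}_g$. The only difference is that you spell out the continuity argument via local finiteness of short geodesics, whereas the paper simply asserts continuity.
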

\begin{proof}
    The proof follows from a similar argument as in the proof of Theorem $1.3$ in \cite{aougab2015minimally}.
\end{proof}
\begin{lemma}\label{lemma7.3}
    The function $\mathcal{F}_g$ is a topological Morse function.
\end{lemma}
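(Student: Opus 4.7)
The plan is to mirror the proof of the analogous statement in Aougab--Huang~\cite{aougab2015minimally}, since the minimum of a family of geodesic length functions on $\mathcal{M}_g$ behaves identically under the two conventions (separating or non-separating), and separability appears only in the combinatorics of $\mathcal{C}_g$, not in the analysis. The two essential inputs are: (i) Wolpert's theorem that $l_{\bullet}(\gamma)$ is real-analytic and strictly convex along Weil--Petersson geodesics for every simple closed curve $\gamma$, so that each $l_{\bullet}(\alpha,\beta)=l_{\bullet}(\alpha)+l_{\bullet}(\beta)$ is smooth and strictly convex; and (ii) the properness of $\mathcal{F}_g$ from Lemma~\ref{lemma7.2}, which together with Lemma~\ref{lemma_injectivity_radius} forces local finiteness of the competing pairs.

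Fix $X_0\in\mathcal{M}_g$ and set $\mathcal{C}_g(X_0)=\{(\alpha,\beta)\in\mathcal{C}_g:l_{X_0}(\alpha,\beta)=\mathcal{F}_g(X_0)\}$. By properness and continuity of each $l_{\bullet}(\alpha,\beta)$, there is a neighborhood $U$ of $X_0$ on which $\mathcal{C}_g(X_0)$ is finite and
\[
\mathcal{F}_g(X)=\min_{(\alpha,\beta)\in\mathcal{C}_g(X_0)} l_X(\alpha,\beta),\qquad X\in U.
\]
Hence on $U$, $\mathcal{F}_g$ is the pointwise minimum of finitely many smooth strictly convex functions. If $|\mathcal{C}_g(X_0)|=1$, then $\mathcal{F}_g$ coincides on $U$ with a single smooth strictly convex length function; its critical points are automatically non-degenerate local minima, which are topologically Morse of index $0$.

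When $|\mathcal{C}_g(X_0)|\geq 2$, the point $X_0$ is critical for $\mathcal{F}_g$ precisely when $0$ lies in the convex hull of the differentials $\{dl_{X_0}(\alpha,\beta):(\alpha,\beta)\in\mathcal{C}_g(X_0)\}$. At such an $X_0$, strict convexity of each $l_{\bullet}(\alpha,\beta)$ together with this convex-hull condition forces $\mathcal{F}_g(X)>\mathcal{F}_g(X_0)$ for every $X\in U\setminus\{X_0\}$, so $X_0$ is a strict local minimum. For $t$ slightly greater than $\mathcal{F}_g(X_0)$, the sublevel set
\[
\{\mathcal{F}_g\leq t\}\cap U=\bigcup_{(\alpha,\beta)\in\mathcal{C}_g(X_0)}\{l_{\bullet}(\alpha,\beta)\leq t\}\cap U
\]
is a finite union of convex bodies each containing $X_0$ in its interior, hence a bounded open star-shaped neighborhood of $X_0$. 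A standard radial homeomorphism then carries it onto a Euclidean ball about the origin, and interpolating over $t$ yields a homeomorphism of $(U,X_0)$ onto a Euclidean neighborhood of $0$ taking $\mathcal{F}_g-\mathcal{F}_g(X_0)$ to $|x|^2$, which is the topological Morse normal form of index $0$.

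The main obstacle is the construction of the radial chart at such a multi-minimum critical point: one must verify that the level sets $\{\mathcal{F}_g=t\}\cap U$ form a continuous one-parameter family of topologically embedded spheres shrinking to $\{X_0\}$ as $t\downarrow\mathcal{F}_g(X_0)$, and that the associated radial function is continuous and monotone. Both statements follow from the fact that the max of the finitely many continuous radial boundary-functions of the convex sublevel sets of the individual $l_{\bullet}(\alpha,\beta)$ is itself a continuous radial boundary-function; this is the technical heart of the argument in~\cite{aougab2015minimally} and transfers to our situation unchanged, since neither strict convexity of $l_{\bullet}(\gamma)$ nor properness of $\mathcal{F}_g$ is sensitive to the separability of $\alpha$.
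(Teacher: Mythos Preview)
Your overall strategy---reduce to the Aougab--Huang/Akrout framework for minima of locally finite families of strictly convex length functions---is exactly what the paper does (its proof is a one-line reference to~\cite{aougab2015minimally}). However, your elaboration contains a genuine error: the implication ``$0$ lies in the convex hull of the differentials $dl_{X_0}(\alpha,\beta)$, and each $l_\bullet(\alpha,\beta)$ is strictly convex, hence $X_0$ is a strict local minimum of $\mathcal{F}_g$'' is false in general. On the plane take $f_1(x,y)=x+x^2+y^2$ and $f_2(x,y)=-x+x^2+y^2$; both are strictly convex with gradients $(\pm1,0)$ at the origin, so $0$ lies in their convex hull, yet $\min(f_1,f_2)=-|x|+x^2+y^2$ is strictly negative along the $x$-axis near $0$. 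The origin there is a topological Morse singularity of index~$1$, not~$0$. Consequently your sublevel sets $\{\mathcal{F}_g\le t\}$ need not shrink to $\{X_0\}$ as $t\downarrow\mathcal{F}_g(X_0)$, and the radial-chart construction you outline breaks down.

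What Akrout's theorem (the engine behind the argument in~\cite{aougab2015minimally}) actually proves is that the minimum of such a family is a topological Morse function whose singularities may have \emph{any} index: at an eutactic point the local model is a nondegenerate quadratic form whose index is determined by the configuration of the gradients, not forced to be zero. The lemma is therefore correct, and deferring to~\cite{aougab2015minimally} is the right move, but the mechanism is Akrout's structural analysis of the singularity---comparing $\mathcal{F}_g$ locally to an appropriate quadratic form---rather than the strict-local-minimum claim you assert.
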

\begin{proof}
    The proof follows from a similar argument as in the proof of Theorem $1.3$ in \cite{aougab2015minimally}.
\end{proof}
\begin{lemma}\label{lemma7.4}
    $\mathcal{F}_g\geq m_g$, where $m_g$ is the perimeter of the regular right-angled hyperbolic $4g$-gon.
\end{lemma}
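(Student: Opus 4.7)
The plan is to realize any $(\alpha,\beta)\in \mathcal{C}_g$ by geodesics on $X$, identify the two complementary regions as geodesic polygons, and compare their total perimeter against that of the regular right-angled $4g$-gon via a hyperbolic isoperimetric inequality. First, I replace $\alpha,\beta$ by their unique geodesic representatives (which remain in minimal position and still fill) and cut along $\alpha \cup \beta$. By Proposition~\ref{prop_of_phi_alpha_beta}(2), the two disjoint $4g$-cycles of the filling permutation $\phi^{(\alpha,\beta)}$ correspond to two geodesic polygons $P_1, P_2$, each with $4g$ sides. Every sub-arc of $\alpha$ or $\beta$ appears on $\partial P_1\cup \partial P_2$ with total multiplicity two, so
\[
\mathrm{perim}(P_1) + \mathrm{perim}(P_2) \;=\; 2\,l_X(\alpha,\beta).
\]

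Next, let $S_i$ be the interior angle sum of $P_i$. Since the angles around each of the $2g$ intersection points sum to $2\pi$, we have $S_1+S_2 = 4\pi g$; the Gauss-Bonnet theorem then gives $\mathrm{Area}(P_i) = (4g-2)\pi - S_i$. I invoke the classical isoperimetric inequality for hyperbolic polygons (due to Bezdek): among hyperbolic $n$-gons of prescribed area, the regular equiangular polygon minimizes perimeter. Applied to each $P_i$ with $n=4g$, this gives
\[
\mathrm{perim}(P_i) \;\geq\; 4g\,\ell(\theta_i),
\]
where $\theta_i = S_i/(4g)$ and $\ell(\theta)$ is the side length of the regular hyperbolic $4g$-gon with all interior angles equal to $\theta$. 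Decomposing this regular polygon into $4g$ isoceles triangles at its center and applying the hyperbolic law of cosines yields the closed form
\[
\cosh\!\left(\frac{\ell(\theta)}{2}\right)\,\sin\!\left(\frac{\theta}{2}\right) \;=\; \cos\!\left(\frac{\pi}{4g}\right),
\]
and the specialization $\theta=\pi/2$ recovers $4g\,\ell(\pi/2)=m_g$.

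It remains to minimize $\ell(\theta_1)+\ell(\theta_2)$ subject to $\theta_1+\theta_2=\pi$. The symmetric point $\theta_1=\theta_2=\pi/2$ is automatically a critical point, and a direct one-variable analysis using the implicit formula above, together with the boundary behaviour (where one polygon degenerates to zero side length while the other's sides remain bounded), shows that this critical point is the global minimum in the admissible range, provided $g\geq 4$. Combining the inequalities,
\[
2\,l_X(\alpha,\beta) \;=\; \mathrm{perim}(P_1)+\mathrm{perim}(P_2) \;\geq\; 8g\,\ell(\pi/2) \;=\; 2m_g,
\]
so $l_X(\alpha,\beta) \geq m_g$ and hence $\mathcal{F}_g(X)\geq m_g$ as $(\alpha,\beta)\in \mathcal{C}_g$ was arbitrary. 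The main obstacle is precisely this last minimization: asymmetric competitors (one polygon very thin, the other large) come close to the symmetric value, and only the hypothesis $g\geq 4$—matching the existence threshold of Theorem~\ref{theorem:2}—suffices to beat them strictly. The rest of the argument is standard once Bezdek's inequality is in hand.
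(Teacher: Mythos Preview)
Your setup matches the paper's: pass to geodesic representatives, cut into two hyperbolic $4g$-gons $P_1,P_2$, note that $\mathrm{perim}(P_1)+\mathrm{perim}(P_2)=2\,l_X(\alpha,\beta)$, and invoke Bezdek's isoperimetric inequality. But you overlook a structural constraint that removes the optimization step entirely, and as written that step is only asserted, not proved.

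The missing observation is that for a \emph{minimal separating} filling pair the areas of $P_1$ and $P_2$ are individually fixed, not merely their sum. The separating curve $\alpha$ cuts $S_g$ into $S_{g_1,1}\sqcup S_{g_2,1}$ with $g_1=g_2=g/2$ (Proposition~\ref{prop:4.1} together with the proof of Theorem~\ref{theorem:2}), and each $P_i$ is precisely one of these halves. Since $\alpha$ is geodesic, Gauss--Bonnet gives
\[
\mathrm{Area}(P_i)\;=\;-2\pi\,\chi(S_{g/2,1})\;=\;(2g-2)\pi
\]
for each $i$ separately. Consequently $S_1=S_2=2g\pi$ and $\theta_1=\theta_2=\pi/2$ are forced; there is no free parameter to optimize. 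Bezdek's inequality now applies to each polygon with this fixed area and yields $\mathrm{perim}(P_i)\geq m_g$ directly, hence $l_X(\alpha,\beta)\geq m_g$. This is exactly the paper's (short) argument.

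Your constrained minimization of $\ell(\theta_1)+\ell(\theta_2)$ over $\theta_1+\theta_2=\pi$ is therefore unnecessary, and in the proposal it is hand-waved (``a direct one-variable analysis\ldots shows''). The claim that ``only the hypothesis $g\geq4$ suffices to beat'' asymmetric competitors is a warning sign: the bound has nothing to do with the existence threshold---once the two areas are pinned down by the separating property, the comparison is immediate for every $X$ and every $(\alpha,\beta)\in\mathcal{C}_g$.
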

\begin{proof}
    Let the minimum value of $\mathcal{F}_g$ occurs at $X\in \mathcal{M}_g$ and $(\alpha,\beta)$ be a shortest length filling pair on $X$, where both of $\alpha$ and $\beta$ are geodesics. We cut the surface along $\alpha\cup\beta$ and we obtain two hyperbolic $4g$-gons $P_1$ and $P_2$, each of area $\pi(2g-2)$. It follows from the fact that the hyperbolic $n$-gon with least perimeter is regular (see Bezdek \cite{bezdek1984elementarer}), the polygons $P_1$ and $P_2$ are regular. The Gauss-Bonnet theorem implies that $P_1$ and $P_2$ are regular right-angled $4g$-gons. Hence, the proof follows.
\end{proof}
Next, we find the growth of the set $\mathcal{B}_g$ in the following lemma.
\begin{lemma} \label{lemma7.5}
    For $g\geq4,$ we have $|\mathcal{B}_g|=N(g)$.
\end{lemma}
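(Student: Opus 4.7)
The plan is to construct a natural bijection between $\mathcal{B}_g$ and the set of $\M$-orbits on $\mathcal{C}_g$; once this is established, finiteness of $\mathcal{B}_g$ follows from the upper bound $N(g)\leq 2(2g-2)(2g-2)!$ of Theorem~\ref{thm:1.3}.

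For the forward direction, I would define a map $\Phi$ from $\M$-orbits on $\mathcal{C}_g$ to $\mathcal{B}_g$ as follows. Given $(\alpha,\beta)\in\mathcal{C}_g$, cutting $S_g$ along $\alpha\cup\beta$ produces two topological $4g$-gons $P_1,P_2$ with a combinatorial side identification encoded by the filling permutation $\phi^{(\alpha,\beta)}$ of Section~\ref{sec:5.1}. I would endow each $P_i$ with the unique hyperbolic metric of a regular right-angled $4g$-gon (whose existence is recorded in the proof of Lemma~\ref{lemma7.4}) and glue the sides in the pattern prescribed by $\phi^{(\alpha,\beta)}$ via orientation-reversing isometries; all sides have common length $m_g/(4g)$ and all angles are right, so the identification is compatible and yields a closed hyperbolic surface $X_{(\alpha,\beta)}\in\mathcal{M}_g$ on which the images of $\alpha$ and $\beta$ are geodesics of total length exactly $m_g$. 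Lemma~\ref{lemma7.4} then forces $\mathcal{F}_g(X_{(\alpha,\beta)})=m_g$, so $X_{(\alpha,\beta)}\in\mathcal{B}_g$. Since $X_{(\alpha,\beta)}$ depends only on the combinatorial data $\phi^{(\alpha,\beta)}$, and Lemma~\ref{lemma11.3} already identifies precisely which conjugations of this permutation arise within a single $\M$-orbit (from relabelling and reorientation), $\Phi$ descends to a well-defined map on orbits.

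Surjectivity of $\Phi$ is immediate from Lemma~\ref{lemma7.4}: for any $X\in\mathcal{B}_g$ and any minimiser $(\alpha,\beta)\in\mathcal{C}_g$, cutting $X$ along $\alpha\cup\beta$ produces two regular right-angled hyperbolic $4g$-gons, and reassembling them is exactly $\Phi([(\alpha,\beta)])=X$. For injectivity, suppose $\Phi([(\alpha_1,\beta_1)])=\Phi([(\alpha_2,\beta_2)])$. Then there is an orientation-preserving isometry $\psi:X_{(\alpha_1,\beta_1)}\to X_{(\alpha_2,\beta_2)}$, which composed with the markings implicit in the two polygonal constructions yields a mapping class $f\in\M$ sending $(\alpha_1,\beta_1)$ to a shortest separating filling pair $f(\alpha_1,\beta_1)$ on $X_{(\alpha_2,\beta_2)}$. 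The main obstacle is to show that $f(\alpha_1,\beta_1)$ and $(\alpha_2,\beta_2)$ lie in the same $\M$-orbit, i.e.\ to rule out the possibility that the marked surface $X_{(\alpha_2,\beta_2)}$ admits two genuinely inequivalent decompositions into regular right-angled $4g$-gons. I expect this rigidity to reduce to the combinatorial fact that any such decomposition is fully determined by its filling permutation up to the conjugations of Lemma~\ref{lemma11.3}: since $\psi$ carries vertices, edges, and right-angled polygon faces isometrically to vertices, edges, and right-angled polygon faces, the two filling permutations differ only by one of those admissible conjugations, hence the two pairs lie in a common $\M$-orbit.

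Combining the bijection $\Phi$ with the bound of Theorem~\ref{thm:1.3} yields $|\mathcal{B}_g|=N(g)<\infty$, completing the proof.
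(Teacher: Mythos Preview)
Your overall architecture matches the paper's: build a map from $\M$-orbits to $\mathcal{B}_g$ by gluing two regular right-angled $4g$-gons according to the filling permutation, check surjectivity via Lemma~\ref{lemma7.4}, and argue injectivity. The paper obtains finiteness differently (properness of $\mathcal{F}_g$ gives compactness of $\mathcal{B}_g$, and the Morse property makes the global minima isolated), but your route via the upper bound of Theorem~\ref{thm:1.3} is equally valid once the bijection is in hand.

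The genuine gap is in your injectivity step. You correctly isolate the obstacle: on a single hyperbolic surface $X$ one must show that any two length-$m_g$ minimal separating filling pairs are related by a self-homeomorphism. But your proposed resolution is circular. The isometry $\psi$ carries the tiling induced by $(\alpha_1,\beta_1)$ on $X_{(\alpha_1,\beta_1)}$ to \emph{a} tiling of $X_{(\alpha_2,\beta_2)}$ by regular right-angled $4g$-gons; there is no reason for its vertices, edges, and faces to coincide with those of the tiling coming from $(\alpha_2,\beta_2)$. So the sentence ``since $\psi$ carries vertices, edges, and right-angled polygon faces isometrically to vertices, edges, and right-angled polygon faces'' simply restates the issue rather than resolving it, and the conclusion that the two filling permutations differ only by an admissible conjugation does not follow.

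The paper handles exactly this point in a separate Proposition~\ref{lemma:7.4}, and the argument is not purely combinatorial: it lifts both filling pairs to $\Gamma$-equivariant tilings $(\mathcal{T},\Gamma)$ and $(\mathcal{T}',\Gamma)$ of $\mathbb{H}$ and invokes the Delaney--Dress theory of equivariant tilings (Huson). Because both tilings are by regular right-angled $4g$-gons with four meeting at each vertex, their Delaney--Dress symbols have identical numerical data ($m_{0,2}=2$, $m_{1,2}=4$, $m_{0,1}=4g$, $|\mathscr{D}|=16g$), hence are isomorphic, and Lemma~\ref{lemma0.2} yields a homeomorphism of $\mathbb{H}$ conjugating $\Gamma$ to itself and sending one tiling to the other; this descends to the desired self-homeomorphism of $X$. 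You would need either this machinery or an independent rigidity argument for tilings of a closed hyperbolic surface by congruent regular right-angled polygons to close the gap.
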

\begin{proof}
    By Lemma~\ref{lemma7.2}, $\mathcal{F}_g$ is proper and hence $\mathcal{B}_g$ is compact. All the points of $\mathcal{B}_g$ are isolated as they are critical points of the Morse function $\mathcal{F}_g$. Therefore, $\mathcal{B}_g$ is a finite set.
    
    Now, we find a one to one correspondence between the set $\mathcal{B}_g$ and the collection of $\M$-orbits of minimal separating filling pairs. For each of $\M$-orbits, we associate an element $X$ of $\mathcal{B}_g$ by simply identifying the edges of two disjoint regular right-angled $4g$-gons in accordance with the chosen orbit. It is straightforward to see that this association is surjective. The injectivity follows from Proposition~\ref{lemma:7.4}. This completes the proof.
\end{proof}
\begin{proposition} \label{lemma:7.4}
Let $(\alpha,\beta)$ and $(\tilde\alpha,\tilde\beta)$ be two minimal filling pairs on a hyperbolic surface $X\in \mathcal{B}_g$ each of length $m_g$. Then there exists a homeomorphism $\phi:X\longrightarrow X$ such that $\phi(\alpha,\beta)=(\tilde\alpha,\tilde\beta)$.
\end{proposition}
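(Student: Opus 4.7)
The plan is to exploit the rigidity coming from Lemma~\ref{lemma7.4}: any minimal filling pair on $X$ of length exactly $m_g$ cuts $X$ into two regular right-angled hyperbolic $4g$-gons. Applied to both pairs, this gives two cell decompositions of $X$ by isometric pairs of regular right-angled $4g$-gons $P_1 \sqcup P_2$ and $\tilde P_1 \sqcup \tilde P_2$, with side-identifications encoded respectively by the filling permutations $\phi^{(\alpha,\beta)}$ and $\phi^{(\tilde\alpha,\tilde\beta)}$ of Section~\ref{sec:5.1}.

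Since the regular right-angled hyperbolic $4g$-gon is unique up to isometry, I would fix a model polygon $P$ and choose isometries from $P$ to each of $P_1, P_2, \tilde P_1, \tilde P_2$. The isometry group of $P$ is the dihedral group of order $8g$, generated by rotation through $\frac{2\pi}{4g}$ and a reflection. For every bijection $\sigma : \{1,2\} \to \{1,2\}$ and every pair of dihedral elements $(d_1,d_2)$, one obtains a candidate piecewise isometry $\psi : P_1 \sqcup P_2 \to \tilde P_{\sigma(1)} \sqcup \tilde P_{\sigma(2)}$. Any such $\psi$ descends to a well-defined, automatically isometric, map $\phi : X \to X$ with $\phi(\alpha \cup \beta) = \tilde\alpha \cup \tilde\beta$ precisely when $\psi$ intertwines the two side-pairings, equivalently when the induced relabeling of edges conjugates $\phi^{(\alpha,\beta)}$ to $\phi^{(\tilde\alpha,\tilde\beta)}$.

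The main obstacle, and the heart of the argument, is to produce at least one triple $(\sigma, d_1, d_2)$ for which this intertwining holds. The key observation is that the $2\cdot(8g)^2$ candidate choices correspond bijectively to the twisting permutations $\mu_g^l \kappa_g^k \delta_g^j \eta_g^i$ appearing in Lemma~\ref{lemma11.3}: a rotation of $P_1$ corresponds to a power of $\kappa_g$, a rotation of $P_2$ to a power of $\delta_g$, swapping $P_1$ with $P_2$ to $\eta_g$ (equivalently, reversing the orientation of the separating curve $\alpha$), and a simultaneous reflection to $\mu_g$. Because $\phi^{(\alpha,\beta)}$ and $\phi^{(\tilde\alpha,\tilde\beta)}$ both reconstruct the same metric surface $X$ from isometric pairs of regular right-angled $4g$-gons, they must be conjugate by such a twisting permutation --- this is the converse direction of Lemma~\ref{lemma11.3} and is where I expect the most technical bookkeeping. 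Once the conjugating twisting permutation has been identified, it determines the desired $\psi$, and hence the required self-isometry $\phi$ of $X$ sending $(\alpha, \beta)$ to $(\tilde\alpha, \tilde\beta)$.
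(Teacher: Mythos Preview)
Your approach is genuinely different from the paper's, and it has a real gap at precisely the point you flag.

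The paper does not work with filling permutations at all here. Instead it lifts both filling pairs to $\Gamma$-equivariant tilings $(\mathcal{T},\Gamma)$ and $(\mathcal{T}',\Gamma)$ of $\mathbb{H}$ by regular right-angled $4g$-gons, passes to the associated chamber systems, and computes the Delaney--Dress symbols $(\mathscr{D},m)$ and $(\mathscr{D}',m')$. Since every tile is a regular right-angled $4g$-gon and every vertex is $4$-valent, both symbols have $|\mathscr{D}|=|\mathscr{D}'|=16g$ and identical values $m_{0,1}=4g$, $m_{0,2}=2$, $m_{1,2}=4$; the paper then invokes a lemma of Huson (equivalence of equivariant tilings is detected by isomorphism of Delaney--Dress symbols) to produce a homeomorphism $\phi:\mathbb{H}\to\mathbb{H}$ with $\phi\mathcal{T}=\mathcal{T}'$ and $\phi\Gamma\phi^{-1}=\Gamma$, which then descends to the required self-homeomorphism of $X$.

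Your proposal, by contrast, tries to match the two polygon pairs by a piecewise isometry compatible with the side-pairings. The step you identify as ``the most technical bookkeeping'' is not bookkeeping: it is the entire content of the proposition. You assert that because both permutations rebuild the \emph{same} hyperbolic surface $X$, they must be conjugate by a twisting permutation, calling this the converse of Lemma~\ref{lemma11.3}. But Lemma~\ref{lemma11.3} says only that pairs in the same $\mathrm{Mod}(S_g)$-orbit yield conjugate permutations; neither its converse nor the stronger statement you need (same metric $\Rightarrow$ conjugate permutations) is proved anywhere in the paper, and proving it is tantamount to the proposition itself. Moreover, your dictionary between dihedral symmetries of the polygons and the generators $\mu_g,\kappa_g,\delta_g,\eta_g$ is not correct: those four permutations act by relabelling the arcs of $\alpha$ and $\beta$ (shifting start points and reversing orientations), not by rotating or reflecting $P_1$ or $P_2$ individually, and the cardinalities do not match ($2\cdot(8g)^2=128g^2$ candidate isometries versus $2\cdot 2\cdot(2g)^2=16g^2$ twisting permutations). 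So as written, the argument is circular at its crucial point; to rescue it you would need an independent reason why the two cell structures on $X$ are combinatorially isomorphic --- which is exactly what the Delaney--Dress machinery supplies.
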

Before going to the proof, we recall the notion of equivariant tiling, chamber system and Delaney-Dress symbols and their equivalence which are essential for the proof of Proposition~\ref{lemma:7.4}. An \emph{equivariant tiling} is a pair $(\mathcal{T},\Gamma)$, where $\mathcal{T}$ is a tiling of $\mathbb{H}$ and $\Gamma$ is a discrete subgroup of $\mathrm{PSL}(2, \mathbb{R})$ satisfying $\gamma\mathcal{T}=\{\gamma A|A\in \mathcal{T}\}=\mathcal{T}$ for all $\gamma\in \Gamma$. Two equivariant tilings $(\mathcal{T},\Gamma)$ and $(\mathcal{T}',\Gamma')$ are said to be equivariantly equivalent if there exists a homeomorphism $\phi:\mathbb{H}\to \mathbb{H}$ such that $\phi \mathcal{T}=\mathcal{T}'$ and $\Gamma'= \phi \Gamma \phi^{-1}$.

To every vertex, edge and tile of $\mathcal{T}$ choose an interior point, called a $0$-, $1$- and $2$-center respectively. For every tile $A\in\mathcal{T}$, join its $2$-center with $0$- and $1$-centers by geodesics (see Figure~\ref{Fig:Chamber}) which decompose the tiling $A$ into triangles. We call each triangle a \emph{chamber} and the set of all triangles, denoted by $\mathcal{C}_\mathcal{T}$, is called \emph{chamber system}. Consider $\mathscr{D}=\mathcal{C}_\mathcal{T}/\Gamma$. The edge opposite the $i$-center of a chamber is called an $i$-edge. For $T\in \mathcal{C}_\mathcal{T}$, the neighbour triangle that share the $i$-edge of $T$, is denoted by $\sigma_i(T)$. Thus we have functions $\sigma_i:\mathcal{C}_\mathcal{T}\to \mathcal{C}_\mathcal{T}$ satisfying $\sigma_i(\gamma T)=\gamma \sigma_i(T)$, for $i=0,1,2$, $T\in \mathcal{C}_\mathcal{T}$ and $\gamma \in \Gamma$. This induces functions $\sigma_i^*:\mathscr{D}\longrightarrow\mathscr{D}$ and let $\mathscr{E}\coloneqq \{(\{D,D'\},i)|D,D'\in \mathscr{D}\text{ and }D\sigma^*_i=D'\}$.
For $0\leq i<j \leq2$, we define  
 $m_{ij}:\mathscr{D}\longrightarrow\mathbb{N}$ with
$$m_{ij}(D)=\min \left\{m\in \mathbb{N}|C(\sigma_i\sigma_j)^m=C \text{ for all } C\in D\right\}.$$
Then the system $(\mathscr{D},m):=((\mathscr{D},\mathscr{E});m_{0,1},m_{0,2},m_{1,2})$ is called a Delaney-Dress symbol if the following hold $\forall D\in \mathscr{D}$ and $0\leq i<j\leq 2$:
\begin{description}
    \item[DS1] $m_{ij}(D)=m_{ij}(D\sigma_i)=m_{ij}(D\sigma_j)$
    \item[DS2] $D(\sigma_i\sigma_j)^{m_{ij}(D)}=D(\sigma_j\sigma_i)^{m_{ij}(D)}=D$
    \item[DS3] $m_{0,2}(D)=2$
    \item[DS4] $m_{0,1}\geq2$
    \item[DS5] $m_{1,2}(D)\geq3.$
\end{description}
Two Delaney-Dress symbols $(\mathscr{D},m)$ and $(\mathscr{D'},m')$ are called isomorphic if
and only if there exists a bijection $\pi:\mathscr{D}
\longrightarrow\mathscr{D}'$ with $(\pi D)\sigma_k=\pi(D\sigma_k)$ and $m_{ij}'(\pi D)=m_{ij}(D)$ for all $D\in \mathscr{D}, 0\leq k\leq2$ and $0\leq i<j \leq 2$. For more details, we refer the reader to Section 1  \cite{huson1993generation}.

\begin{figure}
\begin{center}
\begin{tikzpicture}
\draw ({2*cos(45)},{2*sin(45)}) to[bend left=20] ({2*cos(90)},{2*sin(90)}) to[bend left=20] ({2*cos(135)},{2*sin(135)}) to[bend left=20] ({2*cos(180)},{2*sin(180)}) to[bend left=20] ({2*cos(225)},{2*sin(225)})to[bend left=20] ({2*cos(270)},{2*sin(270)}) to[bend left=20] ({2*cos(315)},{2*sin(315)}) to[bend left=20] ({2*cos(360)},{2*sin(360)}) to[bend left=20] ({2*cos(45)},{2*sin(45)});

\draw ({5+2*cos(45)},{2*sin(45)}) to[bend left=20] ({5+2*cos(90)},{2*sin(90)}) to[bend left=20] ({5+2*cos(135)},{2*sin(135)}) to[bend left=20] ({5+2*cos(180)},{2*sin(180)}) to[bend left=20] ({5+2*cos(225)},{2*sin(225)})to[bend left=20] ({5+2*cos(270)},{2*sin(270)}) to[bend left=20] ({5+2*cos(315)},{2*sin(315)}) to[bend left=20] ({5+2*cos(360)},{2*sin(360)}) to[bend left=20] ({5+2*cos(45)},{2*sin(45)});

\draw [thin,cyan] (5,0)--({5+2*cos(45)},{2*sin(45)});
\draw [thin,cyan] (5,0)--({5+2*cos(90)},{2*sin(90)});
\draw [thin,cyan] (5,0)--({5+2*cos(135)},{2*sin(135)});
\draw [thin,cyan] (5,0)--({5+2*cos(180)},{2*sin(180)});
\draw [thin,cyan] (5,0)--({5+2*cos(225)},{2*sin(225)});
\draw [thin,cyan] (5,0)--({5+2*cos(270)},{2*sin(270)});
\draw [thin,cyan] (5,0)--({5+2*cos(315)},{2*sin(315)});
\draw [thin,cyan] (5,0)--({5+2*cos(360)},{2*sin(360)});

\draw [thin,blue] (5,0)--({5+1.7*cos(22.5)},{1.7*sin(22.5)});
\draw [thin,blue] (5,0)--({5+1.7*cos(90-22.5)},{1.7*sin(90-22.5)});
\draw [thin,blue] (5,0)--({5+1.7*cos(135-22.5)},{1.7*sin(135-22.5)});
\draw [thin,blue] (5,0)--({5+1.7*cos(180-22.5)},{1.7*sin(180-22.5)});
\draw [thin,blue] (5,0)--({5+1.7*cos(225-22.5)},{1.7*sin(225-22.5)});
\draw [thin,blue] (5,0)--({5+1.7*cos(270-22.5)},{1.7*sin(270-22.5)});
\draw [thin,blue] (5,0)--({5+1.7*cos(315-22.5)},{1.7*sin(315-22.5)});
\draw [thin,blue] (5,0)--({5+1.7*cos(360-22.5)},{1.7*sin(360-22.5)});

\draw[thick, green] ({5+1.7*cos(22.5)},{1.7*sin(22.5)}) node{\tiny$\bullet$} ({5+1.7*cos(90-22.5)},{1.7*sin(90-22.5)}) node{\tiny$\bullet$}
({5+1.7*cos(135-22.5)},{1.7*sin(135-22.5)}) node{\tiny$\bullet$}
({5+1.7*cos(180-22.5)},{1.7*sin(180-22.5)}) node{\tiny$\bullet$}
({5+1.7*cos(225-22.5)},{1.7*sin(225-22.5)}) node{\tiny$\bullet$}
({5+1.7*cos(270-22.5)},{1.7*sin(270-22.5)}) node{\tiny$\bullet$}
({5+1.7*cos(315-22.5)},{1.7*sin(315-22.5)}) node{\tiny$\bullet$}
({5+1.7*cos(360-22.5)},{1.7*sin(360-22.5)}) node{\tiny$\bullet$};

\draw[red] ({5+2*cos(45)},{2*sin(45)}) node{\tiny$\bullet$} ({5+2*cos(90)},{2*sin(90)}) node{\tiny$\bullet$}
({5+2*cos(135)},{2*sin(135)}) node{\tiny$\bullet$}
({5+2*cos(180)},{2*sin(180)}) node{\tiny$\bullet$}
({5+2*cos(225)},{2*sin(225)}) node{\tiny$\bullet$}
({5+2*cos(270)},{2*sin(270)}) node{\tiny$\bullet$}
({5+2*cos(315)},{2*sin(315)}) node{\tiny$\bullet$}
({5+2*cos(360)},{2*sin(360)}) node{\tiny$\bullet$};

\draw (5,0) node{$\bullet$};

\draw (2.5,-3) node{\textcolor{red}{$\bullet$} : $0$-center} (2.5,-3.5) node{\textcolor{green}{$\bullet$} : $1$-center} (2.5,-4) node{$\bullet$ : $2$-center};
\end{tikzpicture}  
\end{center}
\caption{A tile and its chambers.}
\label{Fig:Chamber}
\end{figure}
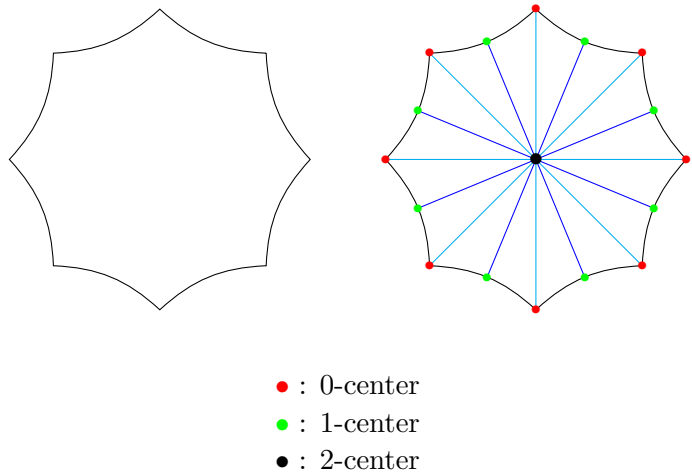

Next, we state a lemma (for details, see Lemma 1.1 \cite{huson1993generation}) which will be used to prove Proposition \ref{lemma:7.4}.
\begin{lemma}\label{lemma0.2}
Two equivariant tilings $(\mathcal{T},\Gamma)$ and $(\mathcal{T}',\Gamma')$ are (equivariantly)
equivalent if and only if the corresponding Delaney-Dress symbols $(\mathscr{D},m)$ and $(\mathscr{D'},m')$ are isomorphic. 
\end{lemma}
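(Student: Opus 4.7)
The plan is to realize the two filling pairs as $\Gamma$-equivariant tilings of the universal cover $\mathbb{H}$ and then invoke Lemma~\ref{lemma0.2} to transport one tiling onto the other by an equivariant homeomorphism, which will descend to the desired self-homeomorphism of $X$.

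First, I will extract a regular tiling from each filling pair. Since $X\in\mathcal{B}_g$ and both $(\alpha,\beta)$ and $(\tilde\alpha,\tilde\beta)$ realize the length $m_g$, the argument of Lemma~\ref{lemma7.4} (Bezdek's extremality together with Gauss--Bonnet) forces each to cut $X$ into two congruent regular right-angled hyperbolic $4g$-gons. Writing $X=\mathbb{H}/\Gamma$, the preimages give two $\Gamma$-invariant geodesic tilings $(\mathcal{T},\Gamma)$ and $(\mathcal{T}',\Gamma)$ of $\mathbb{H}$, each globally isometric to the unique regular right-angled $\{4g,4\}$ tiling.

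Second, I will compute the Delaney--Dress symbols $(\mathscr{D},m)$ and $(\mathscr{D}',m')$ of these two equivariant tilings and show that they are isomorphic. Both chamber systems have exactly $16g$ elements (two faces times $8g$ chambers per face on $X$), and at every chamber one reads $m_{0,1}=4g$, $m_{0,2}=2$, $m_{1,2}=4$ off the local data (a $4g$-gon with all angles $\pi/2$ and four tiles meeting at each vertex). To build an explicit bijection $\pi:\mathscr{D}\to\mathscr{D}'$ intertwining the three involutions $\sigma_0^*,\sigma_1^*,\sigma_2^*$, I will transport the chambers of a distinguished fundamental tile of $\mathcal{T}$ onto those of a distinguished tile of $\mathcal{T}'$ via the unique rigid-motion identification of regular right-angled $4g$-gons, and propagate across the cell decomposition of $X$ using the edge-pairing data encoded by the filling permutation of Proposition~\ref{prop_of_phi_alpha_beta}.

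Third, given this isomorphism, Lemma~\ref{lemma0.2} yields a homeomorphism $\widetilde\phi:\mathbb{H}\to\mathbb{H}$ with $\widetilde\phi\,\mathcal{T}=\mathcal{T}'$ and $\widetilde\phi\,\Gamma\,\widetilde\phi^{-1}=\Gamma$. The conjugation condition lets $\widetilde\phi$ descend to a homeomorphism $\phi:X\to X$ carrying the $1$-skeleton $\alpha\cup\beta$ onto $\tilde\alpha\cup\tilde\beta$. Since $\alpha$ is separating and $\beta$ is not, and the same holds for $(\tilde\alpha,\tilde\beta)$, $\phi$ is forced to send $\alpha$ to $\tilde\alpha$ and $\beta$ to $\tilde\beta$ rather than swap them, yielding the desired map. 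The principal obstacle will be the second step: verifying that the chamber-level bijection $\pi$ really intertwines all three involutions $\sigma_i^*$. The numerical parameters agree effortlessly; what requires genuine work is the global combinatorial consistency, namely that any two ways of gluing two regular right-angled $4g$-gons to produce the same surface $X$ give isomorphic quotient chamber systems. This rests on the rigidity of the regular $\{4g,4\}$ tiling of $\mathbb{H}$ (a single orbit of chambers under its full isometry group) together with the gluing bookkeeping from Proposition~\ref{prop_of_phi_alpha_beta}.
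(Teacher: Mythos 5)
Your proposal does not prove the statement in question. The statement is Lemma~\ref{lemma0.2} itself: the equivalence between equivariant equivalence of tilings $(\mathcal{T},\Gamma)$, $(\mathcal{T}',\Gamma')$ and isomorphism of their Delaney--Dress symbols. What you have written is instead a proof sketch of Proposition~\ref{lemma:7.4} (that two minimal filling pairs of length $m_g$ on $X\in\mathcal{B}_g$ are related by a self-homeomorphism of $X$), and your third step explicitly \emph{invokes} Lemma~\ref{lemma0.2} as a black box to produce the homeomorphism $\widetilde\phi:\mathbb{H}\to\mathbb{H}$. Since the target statement is used as an ingredient rather than established, the argument is vacuous as a proof of that statement.

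For context: the paper does not prove Lemma~\ref{lemma0.2} either; it is quoted from Lemma 1.1 of \cite{huson1993generation}. An actual proof would have to go in both directions. The forward direction (an equivariant equivalence induces an isomorphism of symbols) is routine bookkeeping with the chamber systems. The substantive direction is the converse: starting from a bijection $\pi:\mathscr{D}\to\mathscr{D}'$ intertwining the $\sigma_i^*$ and preserving the $m_{ij}$, one must lift it to a $\Gamma$-equivariant bijection of chambers $\mathcal{C}_{\mathcal{T}}\to\mathcal{C}_{\mathcal{T}'}$, realize that bijection by homeomorphisms of individual chambers compatible along shared $i$-edges, and verify that the glued map is a global homeomorphism of $\mathbb{H}$ conjugating $\Gamma$ to $\Gamma'$. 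None of this appears in your proposal. (Your sketch of Proposition~\ref{lemma:7.4} is broadly consistent with the paper's proof of that proposition, but that is a different statement.)
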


\begin{proof}[Proof of Proposition \ref{lemma:7.4}]
Consider the filling pairs $(\alpha,\beta)$ and $(\tilde\alpha,\tilde\beta)$ as in the proposition. Let $\Gamma$ be the discrete subgroup of $\mathrm{PSL}_2(\mathbb{R})$ such that $X\cong \mathbb{H}/\Gamma$. The lifts $\mathscr{T}$ and $\mathscr{T}'$ of $(\alpha,\beta)$ and $(\tilde\alpha,\tilde\beta)$ respectively are two tilings of $\mathbb{H}$ by regular right-angled $4g$-gons. Thus we have two equivariant tilings $(\mathcal{T},\Gamma)$ and $(\mathcal{T}',\Gamma)$.

Consider the chamber systems $\mathcal{C}_\mathcal{T}$ and $\mathcal{C}_{\mathcal{T}'}$ . Let $(\mathscr{D},m)$ and $(\mathscr{D'},m')$ be the corresponding Delaney-Dress symbols. Then,
\begin{align*}
    & m_{0,2}(D)=2= m_{0,2}'(D'),\\
    &m_{1,2}(D)=4=m_{1,2}'(D') \text{ and}\\
    &m_{0,1}(D)=4g=m_{0,1}'(D'), \text{ for all } D\in \mathscr{D}, D'\in \mathscr{D'}. 
\end{align*}
As $|\mathscr{D}|=|\mathscr{D'}|=16g$ and value of $m$ and $m'$ are identical,  the Delaney-Dress symbols $(\mathscr{D},m)$ and $(\mathscr{D'},m')$ are isomorphic and hence by Lemma \ref{lemma0.2}, $(\mathcal{T},\Gamma)$ and $(\mathcal{T}',\Gamma)$ are equivariantly equivalent. Therefore, there exists a homeomorphism $\phi:\mathbb{H}\longrightarrow\mathbb{H}$ such that $\phi\mathscr{T}=\mathscr{T'}$. We can take this homeomorphism $\phi$ as orientation preserving, otherwise we compose it by a suitable reflection. This homeomorphism projects to a homeomorphism $\tilde\phi$ of the surface such that $\tilde\phi(\alpha,\beta)=(\tilde\alpha,\tilde\beta)$.
\end{proof}
\begin{lemma}\label{lem:7.8}
    For $X\in \mathcal{B}_g,g\geq4$, the injectivity radius $r_{\mathrm{inj}}(X)$ of $X$ satisfies
    $$r_{\mathrm{inj}}(X)\geq\frac{1}{2}\times \cosh^{-1}\left[ 8\cos^3\left(\frac{2\pi}{4g}\right)+8\cos^2\left(\frac{2\pi}{4g}\right)-1\right].$$
\end{lemma}
\begin{proof}
    To prove the lemma it suffices to show that for any simple closed geodesic $\gamma$ on $X$, we have 
    $$l_X(\gamma)\geq \cosh^{-1}\left[ 8\cos^3\left(\frac{2\pi}{4g}\right)+8\cos^2\left(\frac{2\pi}{4g}\right)-1\right].$$
    Given a simple closed geodesic $\gamma$ on $X$, let $\Tilde{\gamma}$ be the geodesic segments of $\gamma$ on the two $4g$-gons which are obtained by cutting the surface $X$ along a minimally intersecting filling pair $(\alpha,\beta)$ of length $m_g$. Then $\Tilde{\gamma}$ is a disjoint union of geodesic arcs with end points on the boundary of the polygons. Now, there are two cases to be considered.
    
    \noindent \textbf{Case 1:} $i(\gamma,\alpha)\neq0.$ In this case, $\Tilde{\gamma}$ contains at least two arcs of length at least the length of a side of the $4g$-gons (Figure \ref{fig:7.2}$(i)$). Therefore,
    $$l_X\left(\gamma\right)\geq 2\times\cosh^{-1}\left(2\cos\left(\frac{2\pi}{4g}\right)+1\right), \text{ where } g\geq4.$$

    \noindent \textbf{Case 2:} $i(\gamma,\alpha)=0$. In this case, either $\Tilde{\gamma}$ contains an arc that surpasses at least three consecutive sides of a $4g$-gon (Figure \ref{fig:7.2}$(ii)$) or it contains at least two arcs as discussed in Case 1. In the latter case, the inequality as in Case 1 holds and in the former case, by using basic hyperbolic trigonometry (Theorem 2.4.1 \cite{buser2010geometry}), we have 
    $$l_X(\gamma)\geq \cosh^{-1}\left[ 8\cos^3\left(\frac{2\pi}{4g}\right)+8\cos^2\left(\frac{2\pi}{4g}\right)-1\right].$$
    Now, the proof follows by comparing the above two inequalities.
\end{proof}
\begin{remark}
    The right hand side of the inequality of Lemma \ref{lem:7.8} is strictly increasing with $g$. Hence, we get a universal lower bound
    $$r_{\mathrm{inj}}(X)\geq\frac{1}{2}\times \cosh^{-1}\left[ 8\cos^3\left(\frac{2\pi}{4\times4}\right)+8\cos^2\left(\frac{2\pi}{4\times4}\right)-1\right]\simeq 1.594,$$
    for all $X\in \mathcal{B}_g \text{ and }g\geq4.$
\end{remark}
\begin{remark}
     For every even genus $g\geq4$, there exists a surface $X\in\mathcal{B}_g$ for which equality holds in Lemma \ref{lem:7.8}. In particular, for $g=4$ consider the common perpendicular arc between the edges labelled by $\beta_7$ in the second polygon of Figure \ref{One_of_two_poly}. This arc projects onto a geodesic with length as given in the right hand side of the inequality of Lemma \ref{lem:7.8}. The existence for the higher genus follows from the construction of minimally intersecting separating filling pair.
\end{remark}
\begin{figure}[htbp]
    \centering
    \begin{tikzpicture}
        \draw ({6.5+2*cos(0)},{2*sin(0)}) to [bend right] ({6.5+2*cos(330)},{2*sin(330)});
        \draw ({6.5+2*cos(330)},{2*sin(330)}) to [bend right] ({6.5+2*cos(300)},{2*sin(300)});
        \draw ({6.5+2*cos(300)},{2*sin(300)}) to [bend right] ({6.5+2*cos(270)},{2*sin(270)});
        \draw ({6.5+2*cos(270)},{2*sin(270)}) to [bend right] ({6.5+2*cos(240)},{2*sin(240)});
        \draw ({6.5+2*cos(240)},{2*sin(240)}) to [bend right] ({6.5+2*cos(210)},{2*sin(210)});
        \draw[cyan,thick] ({6.5+1.8*cos(225)},{1.8*sin(225)}) to [bend left=20] ({6.5+1.8*cos(345)},{1.8*sin(345)});

        \draw ({2*cos(157.5)},{2*sin(157.5)}) to [bend left] (-2,0);
        \draw (-2,0) to [bend left] ({2*cos(202.5)},{2*sin(202.5)});
        \draw ({2*cos(202.5)},{2*sin(202.5)}) to [bend left] ({2*cos(225)},{2*sin(225)});
        \draw ({2*cos(225)},{2*sin(225)}) to [bend left] ({2*cos(247.5)},{2*sin(247.5)});
        \draw ({2*cos(247.5)},{2*sin(247.5)}) to [bend left] ({2*cos(270)},{2*sin(270)});
        \draw ({2*cos(270)},{2*sin(270)}) to [bend left] ({2*cos(292.5)},{2*sin(292.5)});
        \draw ({2*cos(292.5)},{2*sin(292.5)}) to [bend left] ({2*cos(315)},{2*sin(315)});
        \draw ({2*cos(315)},{2*sin(315)}) to [bend left] ({2*cos(337.5)},{2*sin(337.5)});
        \draw ({2*cos(337.5)},{2*sin(337.5)}) to [bend left] ({2*cos(360)},{2*sin(360)});

        \draw[thick, cyan] ({1.85*cos(348.75)},{1.85*sin(348.75)}) to [bend right] ({1.85*cos(303.75)},{1.85*sin(303.75)});
        \draw[thick, cyan] ({1.85*cos(168.75)},{1.85*sin(168.75)}) to [bend left] ({1.85*cos(213.75)},{1.85*sin(213.75)});

        \draw (0,-2.7) node {$(i)$} (6.5,-2.7) node{$(ii)$};
    \end{tikzpicture}
    \caption{$(i)$ Two possible arcs of $\Tilde{\gamma}.$  $(ii)$ An arc of $\Tilde{\gamma}$ surpassing $3$ consecutive side.}
    \label{fig:7.2}
\end{figure}
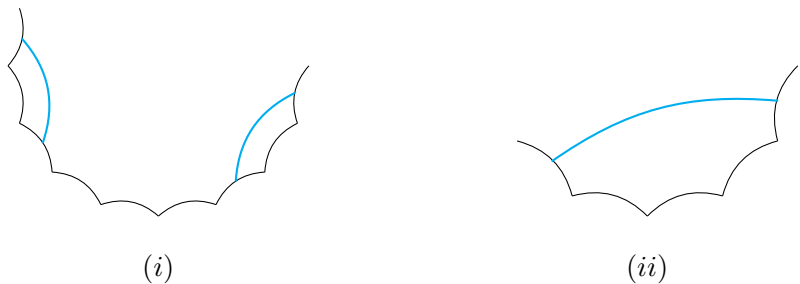

\begin{proof}[Proof of Theorem \ref{theorem_lenth_function}]
By Lemma \ref{lemma7.2}, $\mathcal{F}_g$ is a proper function and that $\mathcal{F}_g$ is a Morse function follows from Lemma \ref{lemma7.3}. The inequality $\mathcal{F}_g\geq m_g$ follows from Lemma \ref{lemma7.4}. It follows from Lemma~\ref{lemma7.5} that $|\mathcal{B}_g|=N(g)$. Finally, the inequality on injectivity radius is proved in Lemma \ref{lem:7.8}.
\end{proof}

\begin{remark}
    The minimal length $l_{\mathrm{ns}}(g)$ for minimally intersecting filling pairs, found by Aougab-Huang (Theorem 1.3 \cite{aougab2015minimally}), is half the perimeter of a regular right-angled $(8g -4)$-gon, that is 
    $$l_{\mathrm{ns}}(g)=(4g-2)\times \cosh ^{-1}\left( 2\left[\cos \left(\frac{2\pi}{8g-4}\right)+\frac{1}{2}\right]\right).$$ 
    But the minimal length $l_\mathrm{s}(g)$ for separating case is $m_g$  which is described in Theorem \ref{theorem_lenth_function}. Therefore, it follows that for $g\geq 4$ even, $l_\mathrm{s}(g)$ is strictly larger than $l_{\mathrm{ns}}(g)$ and the asymptotic difference between these two lengths is $2 \cosh^{-1}(3)$, that is 
    $$\lim_{g\rightarrow\infty}\left[l_\mathrm{s}(g)-l_{\mathrm{ns}}(g)\right]=2 \cosh^{-1}(3).$$
\end{remark}

\end{section}
\section*{Acknowledgements}

The authors would like to thank the anonymous referee for useful suggestions and comments in improving the exposition.

\end{document}